\newcommand{\ud}{\mathrm{d}}
\newcommand{\R}{{\mathbb R}}
\newcommand{\Diff}{\mathrm{Diff}}
\newcommand{\Imm}{\mathrm{Imm}}
\newcommand{\Emb}{\mathrm{Emb}}
\newcommand*\scale{\sigma}
\def\O{\mathcal{O}}
\def\d{\textrm{d}}
\def\LipImm{\mathrm{LipImm}}
\begin{document}

 \maketitle
 \begin{abstract} 
 	The nonlinear spaces of shapes (unparameterized immersed curves or submanifolds) are of interest for many applications in image analysis, such as the identification of  shapes that are  similar
	modulo the action of some group. In this paper we study a general representation of shapes as currents, which are based on linear spaces and are suitable for numerical discretization,
	being robust to noise. We develop the theory of currents for shape spaces by considering both the analytic and numerical  aspects of the problem. In particular, we study the analytical properties of the current map and  the $H^{-s}$ norm that it induces on shapes. We determine the conditions under which the current determines the shape. We then  provide a finite element-based discretization of the currents that is a practical computational tool for shapes. Finally, we demonstrate this approach on a variety of examples.






	
 	\textbf{Keywords:} Currents, finite elements, shape space, image analysis.\\
 	\textbf{MSC~2010:} 32U40m 62M40, 65D18, 74S05
 \end{abstract}

 \listoftodos


\section{Introduction}
\label{sec:introduction}
 
``Shape,'' wrote David Mumford in his 2002 ICM address \cite{Mumford2002}, ``is the ultimate nonlinear thing.''
The set of smooth simple closed curves is an example of a shape space; in the study of shape one seeks
analytic and numerical methods to work with such curves: to compare, to classify, to recognise, to evolve---to
understand them. As is usual in mathematics, nonlinear things are constructed out of linear things, namely vector spaces, using simple operations: maps, quotients, and open subsets. 

The aforementioned curves are constructed by first considering parameterised curves $\phi\colon S^1\to\R^2$, and then identifying curves that differ only by a reparameterisation. 
The set of smooth parameterised curves consists of all smooth embeddings $\Emb(S^1,\R^2)$ of $S^1$ into $\R^2$.
A smooth reparameterisation corresponds to composition from the right by a diffeomorphism of $S^1$, i.e., by an element of the set $\Diff(S^1)$.
Thus, the set of (images of) simple closed curves is given by the quotient~$\Emb(S^1,\R^2)/\Diff(S^1)$. 
More generally, let $M$ and $N$ be manifolds of dimension $m$ and $n$ respectively. 
A \emph{shape space} is the set of $m$-dimensional submanifolds of $N$ that are diffeomorphic to $M$, which is realized as $\Emb(M,N)/\Diff(M)$. (This is a nonlinear analogue of the Grassmannian of linear subspaces of a specific dimension.) There are other shape spaces as well, such as the immersed shapes $\Imm(M,N)/\Diff(M)$, shapes in which $M$ has a boundary, piecewise smooth shapes, and oriented shapes.

It is desirable to recognise examples of the same shape, those that are identical up to the action of $\Diff(M)$ and possibly noise or other obfuscation. This can be studied by finding a metric that is blind to changes inside the class, or by finding a representation of the shapes that is invariant under the action of $\Diff(M)$. 
An example of such a representation is the \emph{differential invariant signature} of shapes, an influential new paradigm introduced by \citet{calabi1998differential}. 
For example, to recognize planar curves up to Euclidean transformations a signature curve is $(\kappa,\kappa_s)(S^1)\subset \R^2$, where $\kappa$ is Euclidean curvature and $\kappa_s$ its derivative with respect to arclength. 
This is clearly invariant under Euclidean motions \emph{and} under reparameterisations. 
As Calabi et~al.\ comment, ``The recognition problem includes a comparison principle that would be able to tell whether two signature curves are close in some sense. Thus, we effectively reduce the group-invariant recognition problem to the problem of imposing a `metric' on the space of shapes but now by `shape' we mean the signature curve, not the original object.'' 
Such metrics can be constructed through \emph{shape currents}.

The method of shape currents, first suggested by \citet{Glaunes2008}, is based on embedding the nonlinear shape space in a vector space endowed with a metric, thereby allowing the construction of flexible families of metrics on shape spaces that are easy to compute.
It is robust to noise and provides for control of the resolution and accuracy of the representation.

In previous work, metrics on shape currents have been combined with optimization routines for registration, typically by deforming the shape by left action of a diffeomorphism group (see \autoref{sec:related} below for more details of the use of currents in shape analysis).
Here, we take the viewpoint of Calabi et~al.\ that left symmetries (registration) are taken care of by computing a signature curve, so the only remaining step is to impose a metric on shape space.
This way we obtain a distance on shapes modulo any classical transformation group; see Example~\ref{ex:diffsig}.


The central object in the paper is the \emph{current map} (see \autoref{def:currentmap}),
which takes
a function $\phi\colon M\to N$
and associates it with
\begin{equation}
  \label{eq:currentmap}
[\phi](\alpha) := \int_M \phi^*\alpha
\end{equation}
where $\alpha\in\Lambda^m(N)$ is an $m$-form on $N$. 
The current map is invariant under orientation-preserving reparameterizations (see \autoref{prop:current}).
Continuing from \citet{Glaunes2008}, our aim is, in broad terms, to study this map and the shape distances it induces.
Let us summarize the main results.


We fix a class of functions $\phi$ and a topology on them, namely that of the Lipschitz immersions
$\phi\colon S^1\to\Omega\subset\R^2$, and likewise fix a class of forms $\alpha$ and a topology on them.
Since $[\phi]$ is a linear map from 1-forms to the reals, it is natural to first adopt a Sobolev norm on 1-forms and
then to demand that $[\phi]$ be a continuous map, so that we can adopt the corresponding operator norm 
on currents.

In \autoref{sec:analytical}, we show the following results (see \autoref{def:sobolev} and \autoref{def:lipimm} for the definitions of $H^s\Lambda^1(\Omega)$, $H^{-s}\Lambda^1(\Omega)$ and $\LipImm(S^1,\Omega)$):
\begin{itemize}
\item
  The map $\alpha \mapsto [\phi](\alpha)$ defined in \eqref{eq:currentmap} is continuous and linear in $\alpha$ (\autoref{prop:bounded}), thus showing that $[\phi]$ lies in $H^{-s}\Lambda^1(\Omega)$.
  This generalizes \cite[Proposition~1]{Glaunes2008} to Lipschitz curves.
  \item 
    The current map from $\LipImm(S^1,\Omega)$ to $H^{-s}\Lambda^1(\Omega)$ is Hölder-continuous for $s\ge 1$ (\autoref{prop:cts})
    \item
    The current map (with same domain and codomain as above) is differentiable for $s\ge 2$ (\autoref{prop:diff}).
    This regularity is important: to use invariants to recognize shapes, they must have the property that nearby shapes have nearby invariants.
\item For a given subspace of the space of functions $S^1\to \Omega$,
  and a given linear space of 1-forms on $\Omega$,
  we show that the current map $\phi \mapsto [\phi]$ essentially determines the shape of $\phi$ (\autoref{prop:emb}).
\end{itemize}

In the numerical part (\autoref{sec:finel}), we discretize our construction. 
The currents can be evaluated for all $\alpha$ in a finite element space, which gives
a flexible and general representation of shapes.
Specifically, we introduce
spaces of finite elements $V$ on $M$ and $W$ on $\Lambda^m(N)$ and evaluate the current
$[\phi_V]|_W$, where $\phi_V$ is the approximation of $\phi$ in $V$.
A simple example is shown in \autoref{fig:easy}.
In this case, piecewise constant elements determine a piecewise constant approximant that interpolates the shape at the element edges. 
The norm on currents restricts to $W$ in a natural way, yielding a discretized norm on shapes.

In \autoref{sec:finel}, we show the following results:
\begin{itemize}
\item In \autoref{sec:quadrature}, we demonstrate that the quadrature errors in evaluating the currents are typically small and that the method is robust in the presence of noise.
This robustness stems from the cancellation property of oscillatory integrals.
The currents can be accurately computed even for very rough shapes (not even Lipschitz) and for noisy shapes.
\item
  In \autoref{sec:approx}, we study how accurately the discretized currents determine the shapes.
This is a question in approximation theory. 
\item
In Propositions~\ref{prop:constants} and \ref{prop:approx} we show that 
the order of approximation is 2 for piecewise constant, 3 for piecewise linear, and 5 for piecewise quadratic elements in $W$.
\item
	In \autoref{sec:metric}, we introduce a discretization of the metric on shapes, so that each shape is approximated by a point in a vector space equipped with an Euclidean metric.
\end{itemize}

Finally, we give several numerical examples of the geometry of shape space induced by the discretization in \autoref{sec:examples}. 
The method does not only compare pairs of shapes, it provides a direct approximation of shape space and its geometry: we present numerical experiments (e.g., Examples~\ref{ex:rounded} and~\ref{ex:final}) 
applying Principal Components Analysis directly to the current representation  in order to successfully separate classes of shapes.


\begin{figure}
\centering
\begin{tikzpicture}[scale=0.5]
	\begin{pgfonlayer}{nodelayer}
		\node [style=none] (0) at (-5, 0) {};
		\node [style=none] (1) at (0, 3.25) {};
		\node [style=none] (2) at (2, 1.25) {};
		\node [style=none] (3) at (2, -1.25) {};
		\node [style=none] (4) at (1.25, -2.75) {};
	\end{pgfonlayer}
	\begin{pgfonlayer}{edgelayer}
		\draw [very thick, color=red, <-, bend left, looseness=0.75] (1.center) to (2.center);
		\draw [very thick, color=red, <-, in=75, out=-60, looseness=0.75] (2.center) to (3.center);
		\draw [very thick, color=red, bend left=15] (3.center) to (4.center);
		\draw [very thick, color=red, <-, in=260, out=225] (4.center) to (0.center);
		\draw [very thick, color=red, <-, bend left=45, looseness=0.75] (0.center) to (1.center);
	\end{pgfonlayer}
\end{tikzpicture}
\begin{tikzpicture}[scale=0.5]
	\begin{pgfonlayer}{nodelayer}
		\node [style=none] (0) at (-5, 0) {};
		\node [style=none] (1) at (0, 3.25) {};
		\node [style=none] (2) at (2, 1.25) {};
		\node [style=none] (3) at (2, -1.25) {};
		\node [style=none] (4) at (1.25, -2.75) {};
		\node [style=none] (5) at (-5.75, 0) {};
		\node [style=none] (6) at (-4, 0) {};
		\node [style=none] (7) at (-5, 1.75) {};
		\node [style=none] (8) at (-3, 1.25) {};
		\node [style=none] (9) at (-3, 3.25) {};
		\node [style=none] (10) at (-1.75, 2.25) {};
		\node [style=none] (11) at (-0.75, 4.25) {};
		\node [style=none] (12) at (-0.25, 2.5) {};
		\node [style=none] (13) at (2, 3) {};
		\node [style=none] (14) at (0.75, 1.25) {};
		\node [style=none] (15) at (3, 0.75) {};
		\node [style=none] (16) at (0.75, -0.75) {};
		\node [style=none] (17) at (-0.25, -2.5) {};
		\node [style=none] (18) at (-2, -4) {};
		\node [style=none] (19) at (-2.25, -2.25) {};
		\node [style=none] (20) at (-4.75, -2.75) {};
		\node [style=none] (21) at (-0.5, 0.25) {};
		\node [style=none] (22) at (-1.75, -0.5) {};
		\node [style=none] (23) at (2.5, -1) {};
		\node [style=none] (24) at (1.5, -3.5) {};
	\end{pgfonlayer}
	\begin{pgfonlayer}{edgelayer}
		\draw [very thick, color=red, <-, bend left, looseness=0.75] (1.center) to (2.center);
		\draw [very thick, color=red, <-, in=75, out=-60, looseness=0.75] (2.center) to (3.center);
		\draw [very thick, color=red, bend left=15] (3.center) to (4.center);
		\draw [very thick, color=red, <-, in=260, out=225] (4.center) to (0.center);
		\draw [very thick, color=red, <-, bend left=45, looseness=0.75] (0.center) to (1.center);
		\draw (5.center) to (7.center);
		\draw (7.center) to (6.center);
		\draw (6.center) to (5.center);
		\draw (7.center) to (8.center);
		\draw (8.center) to (6.center);
		\draw (9.center) to (8.center);
		\draw (9.center) to (7.center);
		\draw (9.center) to (10.center);
		\draw (10.center) to (8.center);
		\draw (10.center) to (11.center);
		\draw (11.center) to (9.center);
		\draw (10.center) to (12.center);
		\draw (12.center) to (11.center);
		\draw (12.center) to (13.center);
		\draw (13.center) to (11.center);
		\draw (14.center) to (13.center);
		\draw (12.center) to (14.center);
		\draw (14.center) to (15.center);
		\draw (15.center) to (13.center);
		\draw (23.center) to (15.center);
		\draw (14.center) to (23.center);
		\draw (16.center) to (23.center);
		\draw (16.center) to (14.center);
		\draw (16.center) to (24.center);
		\draw (24.center) to (23.center);
		\draw (16.center) to (17.center);
		\draw (17.center) to (24.center);
		\draw (24.center) to (18.center);
		\draw (18.center) to (17.center);
		\draw (19.center) to (18.center);
		\draw (19.center) to (17.center);
		\draw (19.center) to (20.center);
		\draw (20.center) to (18.center);
		\draw (6.center) to (19.center);
		\draw (6.center) to (20.center);
		\draw (20.center) to (5.center);
		\draw (10.center) to (14.center);
		\draw  (14.center) to (21.center);
		\draw (21.center) to (10.center);
		\draw  (22.center) to (21.center);
		\draw  (8.center) to (22.center);
		\draw  (22.center) to (6.center);
		\draw  (22.center) to (19.center);
		\draw  (22.center) to (17.center);
		\draw  (22.center) to (16.center);
		\draw  (16.center) to (21.center);
		\draw  (10.center) to (22.center);
	\end{pgfonlayer}
	
	\end{tikzpicture}
	\begin{tikzpicture}[scale=0.5]
	\begin{pgfonlayer}{nodelayer}
		\node [fill=blue, circle, style=none, minimum size=1.0 mm] (0) at (-5, 0) {};
		\node [style=none] (1) at (0, 3.25) {};
		\node [style=none] (2) at (2, 1.25) {};
		\node [style=none] (3) at (2, -1.25) {};
		\node [style=none] (4) at (1.25, -2.75) {};
		\node [style=none] (5) at (-5.75, 0) {};
		\node [style=none] (6) at (-4, 0) {};
		\node [style=none] (7) at (-5, 1.75) {};
		\node [style=none] (8) at (-3, 1.25) {};
		\node [style=none] (9) at (-3, 3.25) {};
		\node [style=none] (10) at (-1.75, 2.25) {};
		\node [style=none] (11) at (-0.75, 4.25) {};
		\node [style=none] (12) at (-0.25, 2.5) {};
		\node [style=none] (13) at (2, 3) {};
		\node [style=none] (14) at (0.75, 1.25) {};
		\node [style=none] (15) at (3, 0.75) {};
		\node [style=none] (16) at (0.75, -0.75) {};
		\node [style=none] (17) at (-0.25, -2.5) {};
		\node [style=none] (18) at (-2, -4) {};
		\node [style=none] (19) at (-2.25, -2.25) {};
		\node [style=none] (20) at (-4.75, -2.75) {};
		\node [style=none] (21) at (-0.5, 0.25) {};
		\node [style=none] (22) at (-1.75, -0.5) {};
		\node [style=none] (23) at (2.5, -1) {};
		\node [style=none] (24) at (1.5, -3.5) {};
		\node [fill=blue, circle, style=none, minimum size=1.0 mm] (25) at (2.15, 0.92) {};
		\node [fill=blue, circle, style=none, minimum size=1.0 mm] (26) at (2.15, -0.55) {};
		\node [fill=blue, circle, style=none, minimum size=1.0 mm] (27) at (2.07, -0.95) {};
		\node [style=none] (28) at (1.25, -2.75) {};
		\node [fill=blue, circle, style=none, minimum size=1.0 mm] (29) at (1.28, -2.7) {};
		\node [fill=blue, circle, style=none, minimum size=1.0 mm] (30) at (0.8, -3.1) {};
		\node [fill=blue, circle, style=none, minimum size=1.0 mm] (31) at (-1.375, -3.46) {};
		\node [fill=blue, circle, style=none, minimum size=1.0 mm] (32) at (-2.1, -3.31) {};
		\node [fill=blue, circle, style=none, minimum size=1.0 mm] (33) at (-3.72, -2.53) {};
		\node [fill=blue, circle, style=none, minimum size=1.0 mm] (34) at (-4.5, -1.8) {};
		\node [fill=blue, circle, style=none, minimum size=1.0 mm] (35) at (-4.57, 0.98) {};
		\node [fill=blue, circle, style=none, minimum size=1.0 mm] (36) at (-4.1, 1.54) {};
		\node [fill=blue, circle, style=none, minimum size=1.0 mm] (37) at (-3, 2.4) {};
		\node [fill=blue, circle, style=none, minimum size=1.0 mm] (38) at (-2.375, 2.8) {};
		\node [fill=blue, circle, style=none, minimum size=1.0 mm] (39) at (-1.3, 3.2) {};
		\node [fill=blue, circle, style=none, minimum size=1.0 mm] (40) at (-0.45, 3.3) {};
		\node [fill=blue, circle, style=none, minimum size=1.0 mm] (41) at (0.8725, 2.75) {};
		\node [fill=blue, circle, style=none, minimum size=1.0 mm] (42) at (1.45, 2.2) {};
	\end{pgfonlayer}
	\begin{pgfonlayer}{edgelayer}
		\draw (5.center) to (7.center);
		\draw (7.center) to (6.center);
		\draw (6.center) to (5.center);
		\draw (7.center) to (8.center);
		\draw (8.center) to (6.center);
		\draw (9.center) to (8.center);
		\draw (9.center) to (7.center);
		\draw (9.center) to (10.center);
		\draw (10.center) to (8.center);
		\draw (10.center) to (11.center);
		\draw (11.center) to (9.center);
		\draw (10.center) to (12.center);
		\draw (12.center) to (11.center);
		\draw (12.center) to (13.center);
		\draw (13.center) to (11.center);
		\draw (14.center) to (13.center);
		\draw (12.center) to (14.center);
		\draw (14.center) to (15.center);
		\draw (15.center) to (13.center);
		\draw (23.center) to (15.center);
		\draw (14.center) to (23.center);
		\draw (16.center) to (23.center);
		\draw (16.center) to (14.center);
		\draw (16.center) to (24.center);
		\draw (24.center) to (23.center);
		\draw (16.center) to (17.center);
		\draw (17.center) to (24.center);
		\draw (24.center) to (18.center);
		\draw (18.center) to (17.center);
		\draw (19.center) to (18.center);
		\draw (19.center) to (17.center);
		\draw (19.center) to (20.center);
		\draw (20.center) to (18.center);
		\draw (6.center) to (19.center);
		\draw (6.center) to (20.center);
		\draw (20.center) to (5.center);
		\draw [dashed] (10.center) to (14.center);
		\draw [dashed] (14.center) to (21.center);
		\draw [dashed] (21.center) to (10.center);
		\draw [dashed] (22.center) to (21.center);
		\draw [dashed] (8.center) to (22.center);
		\draw [dashed] (22.center) to (6.center);
		\draw [dashed] (22.center) to (19.center);
		\draw [dashed] (22.center) to (17.center);
		\draw [dashed] (22.center) to (16.center);
		\draw [dashed] (16.center) to (21.center);
		\draw [dashed] (10.center) to (22.center);
		\draw [color=blue] (41.center) to (40.center);
		\draw [color=blue] (40.center) to (39.center);
		\draw [color=blue] (39.center) to (38.center);
		\draw [color=blue] (38.center) to (37.center);
		\draw [color=blue] (37.center) to (36.center);
		\draw [color=blue] (36.center) to (35.center);
		\draw [color=blue] (35.center) to (0.center);
		\draw [color=blue] (0.center) to (34.center);
		\draw [color=blue] (34.center) to (33.center);
		\draw [color=blue] (33.center) to (32.center);
		\draw [color=blue] (32.center) to (31.center);
		\draw [color=blue] (31.center) to (30.center);
		\draw [color=blue] (30.center) to (29.center);
		\draw [color=blue] (29.center) to (27.center);
		\draw [color=blue] (27.center) to (26.center);
		\draw [color=blue] (26.center) to (25.center);
		\draw [color=blue] (25.center) to (42.center);
		\draw [color=blue] (42.center) to (41.center);
	\end{pgfonlayer}
\end{tikzpicture}
\caption{\label{fig:easy}A simple example of finite element currents using piecewise constant elements. 
An oriented shape (the image
of a simple closed curve $\phi$) is given in red on the left. A triangular mesh is laid over the domain.
For each triangle $T$ in the mesh, the finite element currents $\int_{\phi(S^1)\cap T} \d x $ and $\int_{\phi(S^1)\cap T} \d y $ 
are computed numerically. These determine the triangles (shown as solid black lines on the right) that intersect the shape,
and the $x$- and $y$-extents of the shape on those triangles. This data
determines a piecewise-linear approximant that interpolates the shape and the element edges. 
Higher-order currents, such as $\int_{\phi(S^1)\cap T} y\, \d x $ are also possible, and give a more accurate
representation of the shape. See also Figures \ref{fig:2segments}, \ref{fig:pc}.
}
\end{figure}
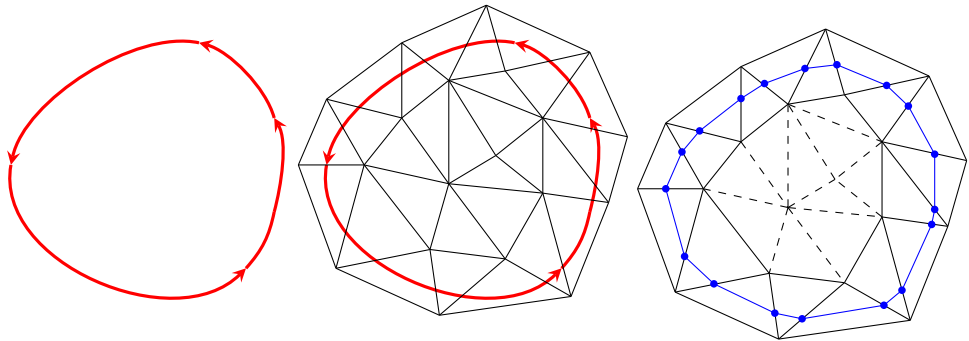

\subsection{Related Literature}
\label{sec:related}

Currents have already been used in shape analysis, primarily for curve and surface matching. 
In \cite{Vaillant2005} a surface in 3D was represented with currents defined on a surface mesh, and a norm was computed to enable the matching of two surfaces using the currents. 
A framework is developed to allow shape currents to be matched in the spirit of the Large Deformation Diffeomorphic Metric Mapping (LDDMM) framework \cite{Beg05}, and the method was demonstrated on surfaces representing shapes and hippocampi. 
A variation on this approach for curves rather than surfaces was developed in \cite{Glaunes2008} (where the currents are referred to as vector-valued measures). 
Again, the aim is a matching algorithm where curves can be deformed onto each other.

In \cite{Durrleman2009} a different benefit of the linear representation provided by currents is recognised, which is that they provide a useful space in which to perform statistical analysis of the deformations between shapes. This was originally considered in \cite{Glaunes2006}, but there is a difficulty that the mean (template) shape has to be defined in such a way that both the shape and deformation are amenable to statistical analysis. A Matching Pursuit algorithm is defined in \cite{Durrleman2009} to provide a computationally tractable representation of shape currents as the number of shapes grows.


The use of currents for matching was extended by Charon and Trouv\'{e} in \cite{Charon2014} as functional currents, where a function is added to the current representation so that the deformation of a shape and some function defined on its surface can be considered simultaneously. The same authors also considered how to deal with cases where the orientation property of currents is undesirable. The fact that a large spike in the appearance of the shape cancels out in the current representation as the positive and negative contributions are virtually identical is a benefit when considering the currents for dealing with noisy representations of shapes. However, in cases where these spikes can truly exist, or where there is orientation information in the image, but it can differ arbitrarily by sign, such as in Diffusion Tensor Images of the brain, the oriented manifold is a disadvantage. This leads to the consideration of varifolds in~\cite{Charon2013}, where the registration of some directed surfaces is demonstrated.  

While there are many other computational approaches to shape space, as far as we are aware they all involve determining a point correspondence between shapes via optimization and/or working directly in the nonlinear shape space; see for example \cite{celledoni2016shape,bauer2014constructing}.

In contrast to the aforementioned work, we focus here solely on shape currents as a way to induce distances and compute statistics on $\Emb(M,N)/\Diff(M)$; we assume that registration (left matching) has already been taken care of, for example through the signature curve, as suggested by \citet{calabi1998differential}.






\section{Currents and their induced metric on shapes}
\label{sec:analytical}

For any natural number $m$, we denote by $\Lambda^m(N)$ the space of smooth $m$-forms on a manifold $N$.
An $m$-\emph{current} is an element of $\Lambda^m(N)^*$, the topological dual of $\Lambda^m(N)$.
Later, in \autoref{sec:contdiff}, we will introduce another topology on $\Lambda^m(N)$,
and we will still call \emph{a current} an element of the corresponding dual.

Currents were introduced by de Rham \cite{de1973varietes}.
They are natural generalisations (or completions) of the pairing by integration between an $m$~dimensional submanifold $M\subset N$ and smooth $m$--forms on $N$.
Currents are instrumental in geometric measure theory, where they are used to
study a very wide class of (not necessarily smooth) subsets of $\R^n$, for example in minimal surface problems \cite{morgan}.
In that field, the functions $\phi$ are typically Lipschitz and the differential forms are smooth.
We allow nonsmooth shapes such as those represented by Lipschitz functions, but as we want to discretize the differential forms by finite elements, we will allow the forms to be nonsmooth as well.

\subsection{De Rham currents and their invariance}\label{sec:drc}

Our parameterization invariant map (\autoref{def:currentmap}) is motivated by the signed area $\int_{\phi(S^1)} y\, \d x $ enclosed by a closed curve.
This is induced from a 1-form ($y\, \d x $) on $\R^2$.\footnote{This also happen to be Euclidean-invariant, but this is \emph{not} relevant to the sequel.}
Currents are invariant under orientation-preserving diffeomorphisms of $M$---sense-preserving
reparameterizations in the case of curves---and hence can be used to factor
out the group $\Diff^+(M)$, the subgroup of orientation-preserving diffeomorphisms.
As can be seen by considering the length and area of a  noisy curve (see the example in \autoref{fig:errorsimple}), currents are very robust to noise.

Although we will mostly focus in the sequel on the case of 
$M=S^1$ and $N=\Omega$,  where $\Omega$ is a domain in $\R^2$,
we formulate the definition of the current map in a more general setting:
\begin{definition}
\label{def:currentmap}
Let $M$ and $N$ be oriented manifolds of dimensions $m$ and $n$,
with $m \leq n$.
We denote by
 $\Imm(M,N)$ the space of immersions from $M$ to $N$.
Let $\phi\colon M\to N$ be an immersion.
We define $[\phi]$ to be the linear function on forms given by
\begin{equation}\label{eq:current_invariant}
	[\phi]\colon \Lambda^m(N)\to \R,\quad [\phi](\alpha) := \int_M \phi^*\alpha = \int_{\phi(M)}\alpha.	
\end{equation}
The \emph{current map} is the corresponding map from parameterized manifolds to currents:
\begin{equation}
  [\cdot] \colon \Imm(M,N) \to \Lambda^m(N)^* .
\end{equation}
\end{definition}

\begin{proposition} 
\label{prop:current}
The current map defined in \autoref{def:currentmap} is invariant with respect to orientation-preserving reparameterizations, that is
$[\phi\circ\psi] = [\phi]$ for all orientation-preserving diffeomorphisms $\psi$ of $M$.
In other words, the current map $\phi \mapsto [\phi]$ is $\Diff^+(M)$-invariant (where $\Diff^+(M)$ denotes the space of orientation-preserving diffeomorphisms on $M$), and it induces a map
from $\Imm(M,N)/\Diff^+(M)$ to $ \Lambda^m(N)^*$.
\end{proposition}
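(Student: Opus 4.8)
The plan is to reduce the statement to the standard change-of-variables formula for integration of differential forms under orientation-preserving diffeomorphisms, and then to package the resulting equality into the language of quotient maps.

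First I would fix an orientation-preserving diffeomorphism $\psi\colon M\to M$ and an arbitrary $m$-form $\alpha\in\Lambda^m(N)$, and compute $[\phi\circ\psi](\alpha)$ directly from the definition. By functoriality of pullback, $(\phi\circ\psi)^*\alpha = \psi^*(\phi^*\alpha)$, so
\begin{equation}
[\phi\circ\psi](\alpha) = \int_M (\phi\circ\psi)^*\alpha = \int_M \psi^*(\phi^*\alpha).
\end{equation}
Now $\phi^*\alpha$ is an $m$-form on the $m$-dimensional manifold $M$, i.e.\ a top-degree form, and $\psi$ is an orientation-preserving diffeomorphism of $M$; the change-of-variables theorem for integration of top forms on oriented manifolds then gives $\int_M \psi^*(\phi^*\alpha) = \int_M \phi^*\alpha = [\phi](\alpha)$. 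Since $\alpha$ was arbitrary, $[\phi\circ\psi] = [\phi]$ as elements of $\Lambda^m(N)^*$.

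Having established $\Diff^+(M)$-invariance, the second assertion is essentially formal: the invariance means the map $[\cdot]\colon \Imm(M,N)\to\Lambda^m(N)^*$ is constant on each orbit of the right $\Diff^+(M)$-action on $\Imm(M,N)$, hence by the universal property of the quotient it factors uniquely as $[\cdot] = \widetilde{[\cdot]}\circ\pi$, where $\pi\colon\Imm(M,N)\to\Imm(M,N)/\Diff^+(M)$ is the canonical projection and $\widetilde{[\cdot]}\colon\Imm(M,N)/\Diff^+(M)\to\Lambda^m(N)^*$ is the induced map. I would state this at the level of sets (no topology or manifold structure on the quotient is needed for the claim as phrased).

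I do not expect a serious obstacle here; the one point deserving a word of care is the hypothesis under which the change-of-variables formula applies. The form $\phi^*\alpha$ need not have compact support or be smooth in the most naive sense if $\phi$ is only an immersion of a noncompact $M$, so strictly one should note that in the case of interest $M=S^1$ is compact (or more generally invoke the formula on the support, or assume $\alpha$ compactly supported), so that all integrals converge and the classical theorem applies verbatim. Everything else is a direct unwinding of definitions.
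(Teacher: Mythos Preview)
Your proof is correct and follows essentially the same argument as the paper: both use functoriality of pullback to write $(\phi\circ\psi)^*\alpha = \psi^*\phi^*\alpha$ and then invoke the change-of-variables formula for top-degree forms under an orientation-preserving diffeomorphism. Your added remarks on the quotient factorization and on compactness of $M$ are accurate but more detailed than the paper, which treats the result as an immediate consequence of the well-definedness of integration of forms.
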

\begin{proof}
This is just the statement that integration of forms is well-defined, i.e., independent of the choice
of coordinates. Specifically,
$$\int_M(\phi\circ\psi)^*\alpha = \int_M \psi^*\phi^*\alpha
= \int_{\psi(M)} \phi^*\alpha = \int_M \phi^*\alpha$$
for orientation-preserving $\psi$. 
\qed
\end{proof}

Currents measure some particular aspects of curves.
Consider the case of planar curves, and in particular the curve $\phi(t)=(\cos(t),0)$ for $0\le t\le 2\pi$. 
This shape retraces itself in opposite directions, so $[\phi](\alpha)=0$ for all $\alpha$; currents cannot distinguish this curve from the 0 curve.

In the case that $\phi(M)$ is a submanifold of $N$, $[\phi]$ is the $m$-current of integration on $\phi(M)$.


\subsection{Properties of the current map}
\label{sec:contdiff}

Before choosing a particular norm in the space of forms, let us motivate our choice.
For the current map to be defined (\autoref{def:currentmap}), we need to be able to take \emph{traces} of a form on any curves (see \autoref{rk:trace}).
In particular, we want this operation to be continuous (see \autoref{prop:bounded}).
This is related to the definition of a \emph{reproducing kernel Hilbert space}, in which traces on \emph{points} are supposed to exist.
Notice however that our setting is strictly more general, as, for instance, $H^1(\R^2)$ is not a reproducing kernel Hilbert space, but traces on curves are well defined.

So,
for its metric and computational
aspects,
we adopt the following $H^s$ Sobolev inner product on 1-forms:
\begin{definition}
  \label{def:sobolev}
 Given a non-negative integer $s$, a {scale parameter} $\scale$, and a domain $\Omega$ of $\R^n$, we denote by $H_{\scale}^s\Lambda^1(\Omega)$ the Hilbert space of forms with scalar product defined by
\begin{equation}
(\alpha,\beta)_{H^s_{\scale}} := \int_\Omega \sum_{i=1,\,2\atop 0\le |k|\le s}\scale^{2|k|}{s\choose k} (D^k\alpha_i)(D^k \beta_i)\, {\textrm d}x_1 {\textrm d}x_2
\end{equation} 
  where $\alpha$ and $\beta$ are 1-forms with coordinates $\alpha = \sum_i\alpha_i \d x_i$ and $\beta = \sum_i \beta_i \d x_i$.
  The dual of that space will be called \emph{$H_{\scale}^{-s}$ currents}, and denoted by $H_{\scale}^{-s}\Lambda^1(\Omega)$.
  We will often omit the scale parameter $\scale$ in the sequel for the sake of readability.
\end{definition}

To understand why $\scale$ is called a scale parameter, suppose that the domain $\Omega = \R^n$ for the moment.
Then to any positive scalar $\lambda$ we can associate the scaling $\lambda \cdot x := \lambda x$.
This function $(\lambda \cdot)$ acts on 1-forms by pull-back, that is (with a slight abuse of notation) $\lambda \cdot \alpha := (\lambda^{-1} \cdot)^* \alpha$.
The resulting form is $\lambda \cdot \alpha (x) = \lambda^{-1} \sum_{i=1,2} \alpha_i(\lambda^{-1} x_1, \lambda^{-1} x_2) \d x_i$,
so we see that $(\lambda \cdot \alpha, \lambda \cdot \beta)_{H^s_{\scale}} = \lambda^{-1} (\alpha, \beta)_{H^s_{\scale/\lambda}}$.
We will also see in \autoref{sec:representers} that $\scale$ determines the length scale at which distances between shapes are be measured.

The properties of the map from curves to $H^{-s}$ currents depends not only on the topology of the currents, but also on that of the curves.
We wish to allow very large classes of curves---indeed, one of the strengths of currents is that they do allow this.
We thus define the following space of curves.
\begin{definition}
\label{def:lipimm}
We define the space of \emph{Lipschitz immersions}, that is, curves $\phi\colon S^1\to\Omega$ such that the components of
$\phi$ are Lipschitz and the tangent vector $\phi'$, wherever it is defined (which is almost everywhere), is nonzero.
We call the space of such curves $\LipImm(S^1,\Omega)$.
\end{definition}


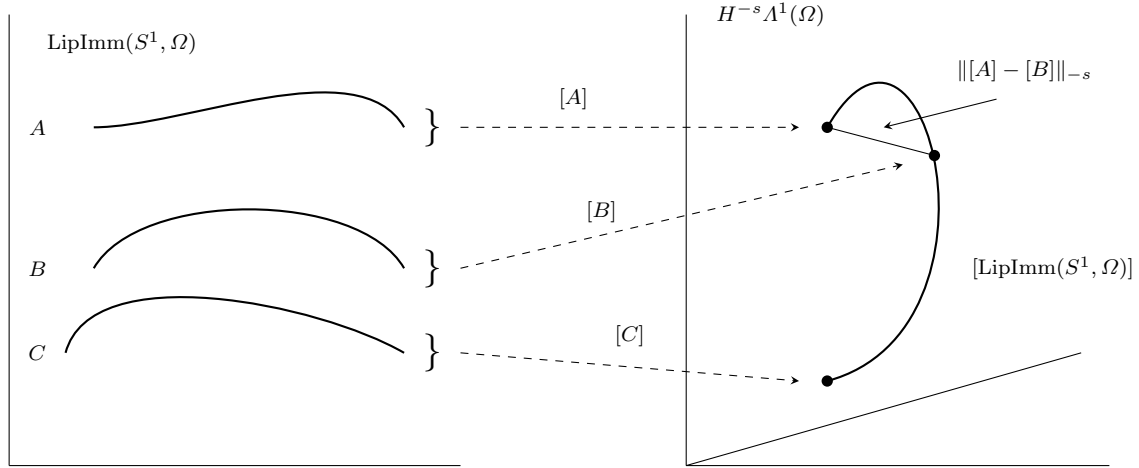
\begin{figure}
\centering
\begin{tikzpicture}[scale=1.5]
	\begin{pgfonlayer}{nodelayer}
		\node [style=none] (0) at (-5, 4) {};
		\node [style=none] (1) at (-5, 0) {};
		\node [style=none] (2) at (-1, 0) {};
		\node [style=none] (3) at (1, 4) {};
		\node [style=none] (4) at (1, 0) {};
		\node [style=none] (5) at (4.75, 0) {};
		\node [style=none] (6) at (4.5, 1) {};
		\node [style=none] (7) at (-1.5, 3) {};
		\node [style=none] (8) at (-4.25, 3) {};
		\node [style=none] (9) at (-1.5, 1.75) {};
		\node [style=none] (10) at (-3.5, 2.25) {};
		\node [style=none] (11) at (-4.25, 1.75) {};
		\node [style=none] (12) at (-1.5, 1) {};
		\node [style=none] (13) at (-2.5, 1.25) {};
		\node [style=none] (14) at (-4.5, 1) {};
		\node [fill=black, circle, minimum size=1.5 mm, style=none] (15) at (2.25, 0.75) {};
		\node [minimum size=1.5 mm, circle, fill=black, style=none] (16) at (2.25, 3) {};
		\node [fill=black, circle, minimum size=1.5 mm, style=none] (17) at (3.2, 2.75) {};
		\node [style=none] (18) at (-4, 3.75) {$\mathrm{LipImm}(S^1,\Omega)$};
		\node [style=none] (19) at (1.75, 4) {$H^{-s}\Lambda^1(\Omega)$};
		\node [style=none] (20) at (-1.25, 3) {\Large $\rbrace$};
		\node [style=none] (21) at (-1.25, 1.75) {\Large $\rbrace$};
		\node [style=none] (22) at (-1.25, 1) {\Large $\rbrace$};
		\node [style=none] (23) at (-1, 3) {};
		\node [style=none] (24) at (-1, 1.75) {};
		\node [style=none] (25) at (-1, 1) {};
		\node [style=none] (26) at (2, 3) {};
		\node [style=none] (27) at (2.92, 2.67) {};
		\node [style=none] (28) at (2, 0.75) {};
		\node [style=none] (29) at (-4.75, 3) {$A$};
		\node [style=none] (30) at (-4.75, 1.75) {$B$};
		\node [style=none] (31) at (-4.75, 1) {$C$};
		\node [style=none] (32) at (0, 3.25) {$[A]$};
		\node [style=none] (33) at (0.25, 2.25) {$[B]$};
		\node [style=none] (34) at (0.5, 1.15) {$[C]$};
		\node [style=none] (35) at (2.75, 3) {};
		\node [style=none] (36) at (3.75, 3.25) {};
		\node [style=none] (37) at (4, 3.5) {$\|[A]-[B]\|_{-s}$};
		\node [style=none] (38) at (4.25, 1.75) {$[\mathrm{LipImm}(S^1,\Omega)]$};
	\end{pgfonlayer}
	\begin{pgfonlayer}{edgelayer}
		\draw (0.center) to (1.center);
		\draw (1.center) to (2.center);
		\draw (3.center) to (4.center);
		\draw (4.center) to (5.center);
		\draw (4.center) to (6.center);
		\draw [thick, bend left=60, looseness=0.75] (11.center) to (9.center);
		\draw [thick, in=150, out=75, looseness=0.75] (14.center) to (12.center);
		\draw [thick, in=120, out=0, looseness=0.75] (8.center) to (7.center);
		\draw [thick, in=15, out=60, looseness=2.00] (16.center) to (15.center);
		\draw [->, dashed] (23.center) to (26.center);
		\draw [->, dashed] (24.center) to (27.center);
		\draw [->, dashed] (25.center) to (28.center);
		\draw (16.center) to (17.center);
		\draw [->] (36.center) to (35.center);
	\end{pgfonlayer}
\end{tikzpicture}
\caption{\label{fig:main}Schematic representation of the induced distance on shapes.
The left-hand side shows the vector space of Lipschitz immersed curves. It is
partitioned into equivalence classes such as $A$, $B$, and $C$, where curves are equivalent if they are related
by a sense-preserving reparameterization. Each equivalence class maps under 
the current map $[\phi]$ to a single point on the right-hand side, which
shows the vector space of linear forms on 1-forms equipped with
the operator norm induced by the $H^s$-metric on 1-forms. The set of
all Lipschitz immersed curves maps into a very small subset of $H^{-s}\Lambda^1(\Omega)$.
This subset is labelled $[\mathrm{LipImm}(S^1,\Omega)]$ on the right.
The distance between two shapes is measured by the `straight line distance'
in the normed vector space $H^{-s}\Lambda^1(\Omega)$.
}
\end{figure}

\begin{remark}
  \label{rk:trace}
  Given a bounded domain $\Omega$ in $\R^n$, with Lipschitz boundary $\partial\Omega$, a typical function $f$ in $H^{1}(\Omega)$ is not continuous and is only defined almost everywhere in $\Omega$.
  Moreover, $\partial\Omega$ has $n$-dimensional Lebesgue measure zero; hence there is no direct meaning we can give to the expression ``$f$ restricted to $\partial\Omega$''.
  The notion of a \emph{trace operator} and the trace theorem resolves this issue for us \cite{Ad2003}.
  More generally, passing from functions in $H^{s}(\Omega)$ to their traces on surfaces of codimension 1 results in a loss of smoothness corresponding to half a derivative.
\end{remark}

The following result generalizes \cite[Proposition 1]{Glaunes2008}, from piecewise $C^1$ to Lipschitz immersions.

\begin{proposition}
\label{prop:bounded}
Let $s\ge 1$ and let $\phi\colon S^1\to\Omega$ be a Lipschitz immersion. Then $[\phi]\colon H^s\Lambda^1(\Omega)\to \R$ is
a bounded linear operator.
\end{proposition}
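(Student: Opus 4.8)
The plan is to verify linearity --- which is immediate, since both $\alpha\mapsto\phi^*\alpha$ and $\int_{S^1}$ are linear --- and then to establish the quantitative bound $\abs{[\phi](\alpha)}\le C_\phi\,\norm{\alpha}_{H^s_\scale\Lambda^1(\Omega)}$, valid first for smooth forms $\alpha$ (the natural domain of $[\phi]$) and then, via the trace operator discussed below, on all of $H^s\Lambda^1(\Omega)$. Write $\alpha=\alpha_1\,\d x_1+\alpha_2\,\d x_2$. Since $\phi$ is Lipschitz it is differentiable almost everywhere with $\phi'\in L^\infty(S^1)$, and $\phi^*\alpha=\bigl(\alpha_1(\phi)\,\phi_1'+\alpha_2(\phi)\,\phi_2'\bigr)\,\d t$, so
\begin{equation*}
\abs{[\phi](\alpha)}\le\sum_{i=1,2}\int_{S^1}\abs{\alpha_i(\phi(t))}\,\abs{\phi_i'(t)}\,\d t\le\sum_{i=1,2}\int_{S^1}\abs{\alpha_i(\phi(t))}\,\abs{\phi'(t)}\,\d t .
\end{equation*}
Thus it suffices to bound each $\int_{S^1}\abs{\alpha_i(\phi(t))}\,\abs{\phi'(t)}\,\d t$ by a constant depending on $\phi$ only, times $\norm{\alpha_i}_{H^s(\Omega)}$.

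I would then pass to the image curve $\Gamma:=\phi(S^1)$. Using that $\phi$ is an immersion, cover $S^1$ by finitely many closed arcs $I_1,\dots,I_p$ on each of which $\phi|_{I_j}$ is injective, with image an arc $\Gamma_j:=\phi(I_j)$ (for $C^1$ immersions this is the standard tubular-neighbourhood argument; for Lipschitz immersions it requires more care, see below). On each $I_j$ the change of variables to arclength turns $\abs{\phi'(t)}\,\d t$ into the length element on $\Gamma_j$, so $\int_{I_j}\abs{\alpha_i(\phi(t))}\,\abs{\phi'(t)}\,\d t=\int_{\Gamma_j}\abs{\alpha_i}\,\d\mathcal{H}^1$; equivalently, one may invoke the area formula, the point being that the multiplicity $y\mapsto\#\phi^{-1}(y)$ is bounded by $p$. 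Summing over $j$, using $\mathcal{H}^1(\Gamma)\le\int_{S^1}\abs{\phi'}\,\d t<\infty$ (so $\Gamma$ has finite length), and applying Cauchy--Schwarz gives
\begin{equation*}
\sum_{i}\int_{S^1}\abs{\alpha_i(\phi(t))}\,\abs{\phi'(t)}\,\d t\le p\,\mathcal{H}^1(\Gamma)^{1/2}\sum_{i}\norm{\alpha_i}_{L^2(\Gamma)} .
\end{equation*}

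It remains to bound $\norm{\alpha_i}_{L^2(\Gamma)}$ by $\norm{\alpha_i}_{H^s(\Omega)}$, and this is exactly the trace theorem (\autoref{rk:trace}): restriction to the codimension-one set $\Gamma$ maps $H^s(\Omega)$ boundedly into $H^{s-1/2}(\Gamma)$, which for $s\ge 1$ satisfies $s-\tfrac12\ge\tfrac12>0$ and hence embeds continuously in $L^2(\Gamma)$. (For $s>1$ one can shortcut the whole argument, since then $H^s(\Omega)\hookrightarrow C^0$ and $\abs{\alpha_i(\phi(t))}\le C\norm{\alpha_i}_{H^s}$ pointwise; it is the borderline case $s=1$ that forces the trace argument.) Chaining the estimates yields $\abs{[\phi](\alpha)}\le C_\phi\sum_i\norm{\alpha_i}_{H^s(\Omega)}\le C_\phi'\,\norm{\alpha}_{H^s_\scale\Lambda^1(\Omega)}$, with $C_\phi$ depending only on $\mathrm{Lip}(\phi)$, the number $p$ of arcs, and the trace constant, proving boundedness. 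The main obstacle is the geometric step in the second paragraph: decomposing a merely Lipschitz immersion into finitely many injective arcs along which the trace theorem is available --- i.e., simultaneously controlling the self-overlap (multiplicity) of the curve and the local regularity needed for the trace operator. The half-derivative loss of that operator is precisely what makes $s\ge 1$ the natural threshold, matching the observation in \autoref{rk:trace} that $H^1(\R^2)$ is not a reproducing kernel Hilbert space (point traces fail) yet admits traces on curves.
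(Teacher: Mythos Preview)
Your proof follows essentially the same route as the paper's: cover $S^1$ by finitely many pieces, invoke the trace theorem to bound $\norm{\alpha_i}_{L^2}$ on the image curve by $\norm{\alpha_i}_{H^s(\Omega)}$, and pair this with the $W^{1,\infty}$ bound on $\phi'$. Your version is more explicit about the change of variables to arclength and the multiplicity control, and you correctly flag the subtlety (which the paper also leaves implicit) that applying the trace theorem along the image of a merely Lipschitz immersion requires some geometric care.
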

\begin{proof}
Each Lipschitz immersion $\phi$ can be written as:
\begin{equation}
\phi=\left(\phi_{1},\phi_{2}\right),\label{eq:Imm}
\end{equation}
where $\phi_{1}$ and $\phi_{2}$ are Lipschitz functions from $S^{1}$ to
$\mathbb{R}$. 

Parameterizing $S^{1}$ by  $t\in[0,1)=I$
and letting $\left\{ U_{i}\right\} $ denote a finite system of open
sets covering $I$,  functions $\phi_{1}$ and $\phi_{2}$ (\ref{eq:Imm})
are Lipschitz functions from $U_{i}$ to $\mathbb{R}$.
We recall \cite{Ev} that $\phi_1|_U$ and $\phi_2|_U$ are
Lipschitz if and only if they belong to $W^{1,\infty}(U)$, the space of functions with
essentially bounded first weak derivative. 

Let $(a,b)=V\in\left\{ U_{i}\right\} $ 
and $\alpha=\alpha_{1}\, \d x+\alpha_{2}\, \d y\in H^{s}\Lambda^1(\R^2)$. In coordinates
we have:
\[
\left[\phi|_V\right](\alpha)=\int_{V}(\alpha_{1}\circ\phi\cdot\phi_{1}'+\alpha_{2}\circ\phi\cdot\phi_{2}' ) \, \d t.
\]
Since $\phi_{1}|_V$ and $\phi_{2}|_V$ both belong to $W^{1,\infty}(V)$, and $\alpha_{1}\circ\phi$,
$\alpha_{2}\circ\phi\in L^{2}(\phi(V))$, by the trace theorem (see \autoref{rk:trace}), we obtain:
\[
\left|\left[\phi|_V\right](\alpha)\right|\leq(b-a)\left(\left\Vert \alpha_{1}\right\Vert _{L^{2}(\phi(S^{1}))}\left\Vert \phi_{1}\right\Vert _{W^{1,\infty}(V)}+\left\Vert \alpha_2\right\Vert _{L^{2}(\phi(S^{1}))}\left\Vert \phi_{2}\right\Vert _{W^{1,\infty}(V)}\right)=C<\infty.
\]
Since $I$ is compact we can find a constant $\tilde{C}$ which works
for all sets $U_{i}$, so $[\phi]$ is bounded.
\qed
\end{proof}

\autoref{prop:bounded} implies that the dual (operator) norm
$$ \|[\phi]\|_{H^{-s}\Lambda^1(\Omega)} := \sup_{\alpha\in H^s\Lambda^1(\Omega)\atop\|\alpha\|_{H^s}=1} [\phi](\alpha)$$
is well defined. That is, $[\phi]$ is an element of the Sobolev dual $H^{-s}\Lambda^1(\Omega)$.
We  measure the similarity of shapes by their distance in this dual: for
two shapes $\phi_1$, $\phi_2$, their distance is (see Figure \ref{fig:main} for a pictorial version of this):
$$ d(\phi_1,\phi_2) := \| [\phi_1] - [\phi_2]\|_{-s} := \| [\phi_1] - [\phi_2] \|_{H^{-s}\Lambda^1(\Omega)}.$$

To sum up,  currents map shapes into a Hilbert space (in a highly nonlinear way) and we
measure the distance between shapes using the norm on that Hilbert space.
At first sight this appears to be a wasteful
representation, as it uses functions on the higher dimensional space $\Omega\subset\R^2$ instead of on $S^1$. 
However, in finite dimensional examples in which quotient spaces are represented using invariants, it is common that large numbers of invariants are needed\footnote{For example, when $S^1$ acts
on ${\mathbb C}^n$ by $z_i \mapsto e^{i\theta z_i}$, the set of  invariants
$\bar z_i z_j$, $1\le i,j\le n$---$n^2$ real invariants in all---is complete, and $n^2$ is much larger than $\mathrm{dim}({\mathbb C}^n/S^1)=2n-1$. One can find smaller complete sets, limited by the dimension of the smallest Euclidean space into which ${\mathbb C}^n/S^1$ can be embedded. However, in such sets the individual invariants are more complicated \cite{bandeira2014saving}, so that the total complexity is $\mathcal{O}(n^2)$.}.
Furthermore, this approach allows one to represent much larger classes of objects than
the smooth embeddings, such as weighted, nonsmooth, and immersed shapes. 

As remarked earlier, it is vital that the map from curves to $H^{-s}$ currents be continuous, so that nearby shapes have nearby $H^{-s}$ currents. 
The topology of Lipschitz immersions
is the direct product of $W^{1,\infty}$ in each component. That is, two Lipschitz
immersions $\phi$, $\psi$ are close if the suprema of $|\phi_1(t)-\psi_1(t)|$, $|\phi_2(t)-\psi_2(t)|$,
$|\phi'_1(t)-\psi'_1(t)|$, and $|\phi'_2(t)-\psi'_2(t)|$ over $0\le t<1$ are all small.

\begin{proposition}
\label{prop:cts}
Let $s\ge 1$. Then the current map $[\cdot]\colon \LipImm(S^1,\Omega)\to H^{-s}\Lambda^1(\Omega)$ is Hölder-continuous with exponent $1/2$;
in particular, it is continuous.
\end{proposition}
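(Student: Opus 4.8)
The plan is to estimate the difference $[\phi](\alpha) - [\psi](\alpha)$ directly in coordinates, uniformly over $\alpha$ with $\norm{\alpha}_{H^s}=1$, and to show it is bounded by a constant times $\norm{\phi-\psi}_{W^{1,\infty}}^{1/2}$. Writing $\phi=(\phi_1,\phi_2)$ and $\psi=(\psi_1,\psi_2)$, and parameterizing $S^1$ by $t\in I=[0,1)$, we have
\begin{equation}
  [\phi](\alpha)-[\psi](\alpha) = \int_I \Big( (\alpha_1\circ\phi)\,\phi_1' - (\alpha_1\circ\psi)\,\psi_1' \Big)\, \d t + \int_I \Big( (\alpha_2\circ\phi)\,\phi_2' - (\alpha_2\circ\psi)\,\psi_2' \Big)\, \d t.
\end{equation}
Treating the first integral (the second is identical up to relabelling), I would split it as
\begin{equation}
  \int_I (\alpha_1\circ\phi)(\phi_1'-\psi_1')\, \d t + \int_I \big((\alpha_1\circ\phi) - (\alpha_1\circ\psi)\big)\,\psi_1'\, \d t .
\end{equation}
The first term is handled exactly as in \autoref{prop:bounded}: by the trace theorem (\autoref{rk:trace}), since $s\ge 1$, the trace of $\alpha_1$ on $\phi(S^1)$ lies in $L^2$ with norm controlled by $\norm{\alpha}_{H^s}=1$, and $\norm{\phi_1'-\psi_1'}_{L^\infty}\le \norm{\phi-\psi}_{W^{1,\infty}}$, so this term is $O(\norm{\phi-\psi}_{W^{1,\infty}})$, which is even better than the claimed Hölder exponent.

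The essential term is $\int_I \big((\alpha_1\circ\phi) - (\alpha_1\circ\psi)\big)\,\psi_1'\, \d t$, and here the difficulty is that $\alpha_1$, being only in $H^s(\Omega)$ (restricted to a curve after a half-derivative loss), is \emph{not} Lipschitz, so we cannot simply write $\abs{\alpha_1(\phi(t))-\alpha_1(\psi(t))}\le L\abs{\phi(t)-\psi(t)}$. This is the main obstacle, and it is precisely why the exponent is $1/2$ rather than $1$. The standard device is interpolation: the difference of traces $\alpha_1\circ\phi - \alpha_1\circ\psi$ should be estimated in an $L^2$-type norm along the curve, and one interpolates between (i) a crude $L^2$ bound on $\alpha_1\circ\phi - \alpha_1\circ\psi$ that does not see the closeness of $\phi$ and $\psi$ (giving a constant, using $\norm{\alpha_1}_{L^2(\text{neighbourhood})}\lesssim \norm{\alpha}_{H^s}$ and boundedness of $\Omega$), and (ii) a bound involving a full derivative of $\alpha_1$, which does see the distance: morally $\norm{\alpha_1\circ\phi-\alpha_1\circ\psi}\lesssim \norm{\nabla\alpha_1}\,\norm{\phi-\psi}_{L^\infty}$, finite because $s\ge 1$ buys one derivative of $\alpha$ along the curve. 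Geometric interpolation of these two estimates (e.g.\ via Hölder's inequality applied to the integrand split as a product, or via a Gagliardo–Nirenberg/real-interpolation estimate on the trace space) yields the exponent $1/2$: $\norm{\alpha_1\circ\phi-\alpha_1\circ\psi}_{L^2}\lesssim \norm{\alpha}_{H^s}\,\norm{\phi-\psi}_{L^\infty}^{1/2}$. Multiplying by $\norm{\psi_1'}_{L^\infty}\le C$ and integrating over the compact $I$ closes the estimate.

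To make this rigorous I would cover $I$ by finitely many open sets $U_i$ on which both $\phi$ and $\psi$ are graphs (using that both are Lipschitz immersions, so $\phi'$ and $\psi'$ are bounded away from $0$ and $\infty$ almost everywhere, possibly after taking $\norm{\phi-\psi}_{W^{1,\infty}}$ small enough that $\psi$ remains an immersion with comparable tangent), parameterize the relevant pieces of the curves by arclength or by one coordinate, apply the trace theorem on each patch to pass from $H^s(\Omega)$ to $H^{s-1/2}\supset L^2$ on the curve, and sum the finitely many contributions with a uniform constant by compactness of $I$, exactly as at the end of the proof of \autoref{prop:bounded}. Taking the supremum over $\norm{\alpha}_{H^s}=1$ then gives
\begin{equation}
  \norm{[\phi]-[\psi]}_{H^{-s}\Lambda^1(\Omega)} \le C\,\norm{\phi-\psi}_{W^{1,\infty}}^{1/2},
\end{equation}
with $C$ depending on $\Omega$, $s$, and a uniform immersion bound, which is the asserted Hölder continuity; continuity is immediate. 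I expect the interpolation step quantifying $\norm{\alpha_1\circ\phi-\alpha_1\circ\psi}$ in terms of $\norm{\phi-\psi}^{1/2}$ to be where all the real work lies, and where care is needed to keep the constant uniform in $\alpha$.
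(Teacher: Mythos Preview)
Your decomposition is reasonable and the first term is indeed harmless, but the interpolation mechanism you sketch for the key term does not work as written. Your endpoint (ii) asks for $\|\alpha_1\circ\phi-\alpha_1\circ\psi\|\lesssim \|\nabla\alpha_1\|\,\|\phi-\psi\|_{L^\infty}$ with $\|\nabla\alpha_1\|$ taken ``along the curve''; but for $s=1$ one only has $\nabla\alpha_1\in L^2(\Omega)$, which has no trace on a $1$-dimensional curve, so this endpoint is simply unavailable. Concretely, with $\phi(t)=(t,0)$, $\psi(t)=(t,\delta)$ and $\alpha_1(x,y)=f(y)$ a bump profile of height $\sim\delta^{1/2}$ supported on $y\in[0,\delta]$, one has $\|\alpha_1\|_{H^1}\sim 1$ while $\|\alpha_1\circ\phi-\alpha_1\circ\psi\|_{L^2(I)}\sim\delta^{1/2}$, not $\delta$. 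Moreover, even if both of your endpoint bounds held, they are bounds on the \emph{same} quantity in the \emph{same} norm, and two such bounds give only their minimum, not a geometric mean; real interpolation requires varying the function space, and $H^1(\R^2)$ does not embed into any space with the Lipschitz behaviour you need.

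The paper bypasses all of this with a short geometric argument. For $s=1$ (which suffices), Stokes' theorem gives
\[
[\phi](\alpha)-[\psi](\alpha)=\int_{\phi(S^1)}\alpha-\int_{\psi(S^1)}\alpha=\int_R \mathrm{d}\alpha,
\]
where $R$ is the region enclosed between the two curves. A single Cauchy--Schwarz against the constant function $1$ then yields
\[
\bigl|[\phi](\alpha)-[\psi](\alpha)\bigr|\le \|\mathrm{d}\alpha\|_{L^2(R)}\,\bigl(\mathrm{area}(R)\bigr)^{1/2}\le \|\alpha\|_{H^1}\,(C\delta)^{1/2},
\]
since the strip $R$ has width at most $\delta$ and length comparable to that of $\phi$. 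The exponent $1/2$ is thus nothing more than the square root of the strip area; no interpolation is needed. Immersions are handled by a small bookkeeping of multiplicities at self-intersections. If you were to repair your coordinate approach by writing $\alpha_1(\phi(t))-\alpha_1(\psi(t))$ as a line integral of $\nabla\alpha_1$ across the strip and then changing variables $(t,u)\mapsto u\phi(t)+(1-u)\psi(t)$ to an area integral, the Jacobian $\sim\delta$ would produce exactly the same $\delta^{1/2}$---but that is just the Stokes argument in coordinates, and the geometric version is cleaner.
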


\begin{proof}
It suffices to prove the proposition for $s=1$, for if two linear functions are close in $H^{-1}$ then they
are close in $H^{-s}$ for $s>1$. First consider the case of embeddings.
Let $\phi$ be a fixed Lipschitz embedding and let $\psi$ be any Lipschitz embedding with $\|\phi_1-\psi_1\|_{W^{1,\infty}(S^1)}<\delta$,
$\|\phi_2-\psi_2\|_{W^{1,\infty}(S^1)}<\delta$.
Let $\alpha\in H^1\Lambda^1(\Omega)$. We need to estimate
\[ A := \left| [\phi](\alpha) - [\psi](\alpha)\right|.  \]
First, we have:
\[A = \left| \int_{\phi(S^1)}\alpha - \int_{\psi(S^1)}\alpha\right|.\]
Let $R$ be the region enclosed between $\phi(S^1)$ and $\psi(S^1)$ (see \autoref{fig:proof}).
From Stokes's theorem:
\begin{equation}
\label{eq:A}
A = \int_R |{\textrm d}\alpha| .
\end{equation}
In coordinates, $\alpha = \alpha_1 \d x + \alpha_2 \d y$, which gives $\d \alpha = (\partial_x\alpha_2 - \partial_y\alpha_1) \d x \wedge \d y$.
Defining $f = \partial_x\alpha_2 - \partial_y\alpha_1$, we have $\d \alpha = f \d x\wedge \d y$.
Now, applying the Cauchy-Schwarz inequality
\[ \left(\int_R f g \, \d x\wedge \d y\right)^2 \le \left(\int_R f^2\,\d x\wedge \d y\right)\left(
\int_R g^2\, \d x\wedge \d y\right)
\]
with $g\equiv 1$, we get
\begin{align*}
A& \le \left( \left( \int_R f^2\d x\wedge \d y\right) \hbox{area}(R)\right)^{\frac{1}{2}} \\
& \le \|\alpha\|_{H^1}\left( \hbox{area}(R)\right)^{\frac{1}{2}}\\
&\le \|\alpha\|_{H^1} (C \delta)^{\frac{1}{2}}
\end{align*}
where $C$ is a constant depending on $\phi$ (approximately equal to the length of $\phi$).
Therefore, for all such $\psi$ we have
\[\|[\phi] - [\psi]\|_{H^{-1}} = \sup_{\|\alpha\|_{H^1}=1} \left| [\phi](\alpha) - [\psi](\alpha)\right|\le
(C\delta)^{\frac{1}{2}},\]
establishing the claim.

For immersions that are not embeddings, Eq. \eqref{eq:A} is modified to take into 
account any intersections. Let  the region $R$ between $\phi$ and $\psi$ be
$R=\cup_{i=0}^n R_i$ where $R_0$ is the nonoverlapping part
and $R_1,\dots,R_n$ are the overlapping parts. Then
\[ A \le \sum_{i=0}^n d_i |f| \d x \wedge \d y \]
where $d_0=1$ and each $d_i$ is either 0 or 2, depending on the orientation of the
boundary curves of each $R_i$ (see Figure \ref{fig:proof}).
Applying the Cauchy--Schwarz inequality with $g=\sum_{i=0}^n d_i \mathcal{X}(R_i)$ gives
\[A \le \|\alpha\|_{H^1}\left( \sum_{i=0}^n d_i^2 \hbox{area}(R_i)\right)^{\frac{1}{2}}\]
As the number of intersections is fixed by the choice of $\phi$, again we have
$A\le (C\delta)^{\frac{1}{2}}$, establishing the claim.
\qed
 \end{proof}

\begin{figure}
\centering
\begin{tikzpicture}[scale=0.9]
	\begin{pgfonlayer}{nodelayer}
		\node [style=none] (0) at (-3, 1) {};
		\node [style=none] (1) at (-1, 3) {};
		\node [style=none] (2) at (1, 1) {};
		\node [style=none] (3) at (-1, -1) {};
		\node [style=none] (4) at (-4, 1) {};
		\node [style=none] (5) at (-1, 2) {};
		\node [style=none] (6) at (2, 1) {};
		\node [style=none] (7) at (-1, 0) {};
		\node [style=none] (8) at (-2.75, 2.5) {$\phi$};
		\node [style=none] (9) at (-3.75, 1.75) {$\psi$};
		\node [style=none] (10) at (-1, 2.5) {$D$};
		\node [style=none] (11) at (-3.5, 1) {$A$};
		\node [style=none] (12) at (-1, -0.5) {$B$};
		\node [style=none] (13) at (1.5, 1) {$C$};
	\end{pgfonlayer}
	\begin{pgfonlayer}{edgelayer}
		\draw [->, color=red, thick, bend left=45] (0.center) to (1.center);
		\draw [->, color=red, thick, bend left=45] (1.center) to (2.center);
		\draw [->, color=red, thick, bend left=45] (2.center) to (3.center);
		\draw [->, color=red, thick, bend left=45] (3.center) to (0.center);
		\draw [<-, color=blue, thick, in=180, out=93, looseness=0.75] (4.center) to (5.center);
		\draw [<-, color=blue, thick, in=87, out=0, looseness=0.75] (5.center) to (6.center);
		\draw [<-, color=blue, thick, in=0, out=258, looseness=0.75] (6.center) to (7.center);
		\draw [<-, color=blue, thick, in=267, out=180, looseness=0.75] (7.center) to (4.center);
	\end{pgfonlayer}
\end{tikzpicture}
\hfill
\begin{tikzpicture}[scale=0.9]
	\begin{pgfonlayer}{nodelayer}
		\node [style=none] (0) at (-5, 2) {};
		\node [style=none] (1) at (0, 2) {};
		\node [style=none] (2) at (-5, 1) {};
		\node [style=none] (3) at (0, 1) {};
		\node [style=none] (4) at (-3, 4) {};
		\node [style=none] (5) at (-2, 4) {};
		\node [style=none] (6) at (-3, -1) {};
		\node [style=none] (7) at (-2, -1) {};
		\node [style=none] (8) at (-3, 3.5) {};
		\node [style=none] (9) at (-3, 3) {};
		\node [style=none] (10) at (-2, 3.5) {};
		\node [style=none] (11) at (-2, 3) {};
		\node [style=none] (12) at (-3, 0.5) {};
		\node [style=none] (13) at (-3, -0.5) {};
		\node [style=none] (14) at (-2, 0.5) {};
		\node [style=none] (15) at (-2, -0.5) {};
		\node [style=none] (16) at (-0.5, 2) {};
		\node [style=none] (17) at (-1, 2) {};
		\node [style=none] (18) at (-0.5, 1) {};
		\node [style=none] (19) at (-1, 1) {};
		\node [style=none] (20) at (-4.5, 2) {};
		\node [style=none] (21) at (-4, 2) {};
		\node [style=none] (22) at (-4.5, 1) {};
		\node [style=none] (23) at (-4, 1) {};
		\node [style=none] (24) at (-2.1, 3.75) {};
		\node [style=none] (25) at (-2.1, 2.1) {};
		\node [style=none] (26) at (-2.9, 2.1) {};
		\node [style=none] (27) at (-2.9, 3.75) {};
		\node [style=none] (28) at (-0.25, 1.9) {};
		\node [style=none] (29) at (-1.9, 1.9) {};
		\node [style=none] (30) at (-1.9, 1.1) {};
		\node [style=none] (31) at (-0.25, 1.1) {};
		\node [style=none] (32) at (-2.9, -0.75) {};
		\node [style=none] (33) at (-2.9, 0.9) {};
		\node [style=none] (34) at (-2.1, 0.9) {};
		\node [style=none] (35) at (-2.1, -0.75) {};
		\node [style=none] (36) at (-4.75, 1.1) {};
		\node [style=none] (37) at (-3.1, 1.1) {};
		\node [style=none] (38) at (-3.1, 1.9) {};
		\node [style=none] (39) at (-4.75, 1.9) {};
		\node [style=none] (40) at (-4, 1.5) {$A$};
		\node [style=none] (41) at (-2.5, 0) {$B$};
		\node [style=none] (42) at (-1, 1.5) {$C$};
		\node [style=none] (43) at (-2.5, 3) {$D$};
		\node [style=none] (44) at (-0.5, 2.25) {$\phi(t)$};
		\node [style=none] (45) at (-0.5, 0.75) {$\psi(-t)$};
		\node [style=none] (46) at (-2.5, 1.5) {$E$};
	\end{pgfonlayer}
	\begin{pgfonlayer}{edgelayer}
		\draw [color=red, thick] (5.center) to (7.center);
		\draw [color=red, thick] (1.center) to (0.center);
		\draw [color=blue, thick] (6.center) to (4.center);
		\draw [color=blue, thick] (2.center) to (3.center);
		\draw [color=red, ->] (16.center) to (17.center);
		\draw [color=red, ->] (21.center) to (20.center);
		\draw [color=red, ->] (10.center) to (11.center);
		\draw [color=red, ->] (14.center) to (15.center);
		\draw [color=blue, ->] (13.center) to (12.center);
		\draw [color=blue, ->] (9.center) to (8.center);
		\draw [color=blue, ->] (22.center) to (23.center);
		\draw [color=blue, ->] (19.center) to (18.center);
		\draw [->] (36.center) to (37.center);
		\draw [->] (37.center) to (38.center);
		\draw [->] (38.center) to (39.center);
		\draw [->] (24.center) to (25.center);
		\draw [->] (25.center) to (26.center);
		\draw [->] (26.center) to (27.center);
		\draw [->] (28.center) to (29.center);
		\draw [->] (29.center) to (30.center);
		\draw [->] (30.center) to (31.center);
		\draw [->] (32.center) to (33.center);
		\draw [->] (33.center) to (34.center);
		\draw [->] (34.center) to (35.center);
	\end{pgfonlayer}
\end{tikzpicture}
\hfill
\begin{tikzpicture}[scale=0.9]
	\begin{pgfonlayer}{nodelayer}
		\node [style=none] (0) at (-5, 2) {};
		\node [style=none] (1) at (0, 2) {};
		\node [style=none] (2) at (-5, 1) {};
		\node [style=none] (3) at (0, 1) {};
		\node [style=none] (4) at (-3, 4) {};
		\node [style=none] (5) at (-2, 4) {};
		\node [style=none] (6) at (-3, -1) {};
		\node [style=none] (7) at (-2, -1) {};
		\node [style=none] (8) at (-3, 3.5) {};
		\node [style=none] (9) at (-3, 3) {};
		\node [style=none] (10) at (-2, 3.5) {};
		\node [style=none] (11) at (-2, 3) {};
		\node [style=none] (12) at (-3, 0.5) {};
		\node [style=none] (13) at (-3, -0.5) {};
		\node [style=none] (14) at (-2, 0.5) {};
		\node [style=none] (15) at (-2, -0.5) {};
		\node [style=none] (16) at (-0.5, 2) {};
		\node [style=none] (17) at (-1, 2) {};
		\node [style=none] (18) at (-0.5, 1) {};
		\node [style=none] (19) at (-1, 1) {};
		\node [style=none] (20) at (-4.5, 2) {};
		\node [style=none] (21) at (-4, 2) {};
		\node [style=none] (22) at (-4.5, 1) {};
		\node [style=none] (23) at (-4, 1) {};
		\node [style=none] (24) at (-2.1, 3.75) {};
		\node [style=none] (25) at (-2.1, 2.1) {};
		\node [style=none] (26) at (-2.9, 2.1) {};
		\node [style=none] (27) at (-2.9, 3.75) {};
		\node [style=none] (28) at (-0.25, 1.9) {};
		\node [style=none] (29) at (-1.9, 1.9) {};
		\node [style=none] (30) at (-1.9, 1.1) {};
		\node [style=none] (31) at (-0.25, 1.1) {};
		\node [style=none] (32) at (-2.9, -0.75) {};
		\node [style=none] (33) at (-2.9, 0.9) {};
		\node [style=none] (34) at (-2.1, 0.9) {};
		\node [style=none] (35) at (-2.1, -0.75) {};
		\node [style=none] (36) at (-4.75, 1.1) {};
		\node [style=none] (37) at (-3.1, 1.1) {};
		\node [style=none] (38) at (-3.1, 1.9) {};
		\node [style=none] (39) at (-4.75, 1.9) {};
		\node [style=none] (40) at (-4, 1.5) {$A$};
		\node [style=none] (41) at (-2.5, 0) {$B$};
		\node [style=none] (42) at (-1, 1.5) {$C$};
		\node [style=none] (43) at (-2.5, 3) {$D$};
		\node [style=none] (44) at (-0.5, 2.25) {$\phi(t)$};
		\node [style=none] (45) at (-0.5, 0.75) {$\psi(-t)$};
		\node [style=none] (46) at (-2.9, 1.9) {};
		\node [style=none] (47) at (-2.1, 1.9) {};
		\node [style=none] (48) at (-2.1, 1.1) {};
		\node [style=none] (49) at (-2.9, 1.1) {};
		\node [style=none] (50) at (-2.8, 1.8) {};
		\node [style=none] (51) at (-2.2, 1.8) {};
		\node [style=none] (52) at (-2.2, 1.2) {};
		\node [style=none] (53) at (-2.8, 1.2) {};
		\node [style=none] (54) at (-2.5, 1.5) {$E$};
	\end{pgfonlayer}
	\begin{pgfonlayer}{edgelayer}
		\draw [color=red, thick] (5.center) to (7.center);
		\draw [color=red, thick] (1.center) to (0.center);
		\draw [color=blue, thick] (6.center) to (4.center);
		\draw [color=blue, thick] (2.center) to (3.center);
		\draw [color=red, <-] (16.center) to (17.center);
		\draw [color=red, <-] (21.center) to (20.center);
		\draw [color=red, ->] (10.center) to (11.center);
		\draw [color=red, ->] (14.center) to (15.center);
		\draw [color=blue, ->] (13.center) to (12.center);
		\draw [color=blue, ->] (9.center) to (8.center);
		\draw [color=blue, <-] (22.center) to (23.center);
		\draw [color=blue, <-] (19.center) to (18.center);
		\draw [<-] (36.center) to (37.center);
		\draw [<-] (37.center) to (38.center);
		\draw [<-] (38.center) to (39.center);
		\draw [->] (24.center) to (25.center);
		\draw [->] (25.center) to (26.center);
		\draw [->] (26.center) to (27.center);
		\draw [<-] (28.center) to (29.center);
		\draw [<-] (29.center) to (30.center);
		\draw [<-] (30.center) to (31.center);
		\draw [->] (32.center) to (33.center);
		\draw [->] (33.center) to (34.center);
		\draw [->] (34.center) to (35.center);
		\draw [->] (46.center) to (47.center);
		\draw [->] (47.center) to (48.center);
		\draw [->] (48.center) to (49.center);
		\draw [->] (49.center) to (46.center);
		\draw [->] (50.center) to (51.center);
		\draw [->] (51.center) to (52.center);
		\draw [->] (52.center) to (53.center);
		\draw [->] (53.center) to (50.center);
	\end{pgfonlayer}
\end{tikzpicture}
\caption{\label{fig:proof}
Constructions used in the proof of Proposition~\ref{prop:cts}.
Here the red curve is the reference curve $\phi$ and the nearby (blue)
curve $\psi$ is shown with reversed orientation.
\emph{Left:} The line integral $\int_\phi\alpha+\int_\psi\alpha$ is equal, by Stokes' theorem,
to $\int_A\d\alpha-\int_B\d\alpha+\int_C\d\alpha-\int_D\d\alpha$, 
so its magnitude is bounded by $\int_{A\cup B\cup C\cup D}|\d\alpha|$.
\emph{Middle:} When the reference curve (in red) intersects itself in the sense shown,
$\int_{\phi}\alpha - \int_{\psi}\alpha = \int_A\d\alpha-\int_B\d\alpha+\int_C\d\alpha-\int_D\d\alpha$,
so $\int_E\d\alpha$ does not appear, i.e., it has weight $d=0$.
\emph{Right:} When the reference curve (in red) intersects itself in the other sense,
we have $\int_{\phi}\alpha - \int_{\psi}\alpha = -\int_A\d\alpha-\int_B\d\alpha-\int_C\d\alpha-\int_D\d\alpha
-2\int_E\d\alpha$, i.e., $\int_E\d\alpha$ appears with weight $d=2$.
A similar construction applies if the red curve takes on the same value 3 or more times.
}
\end{figure}
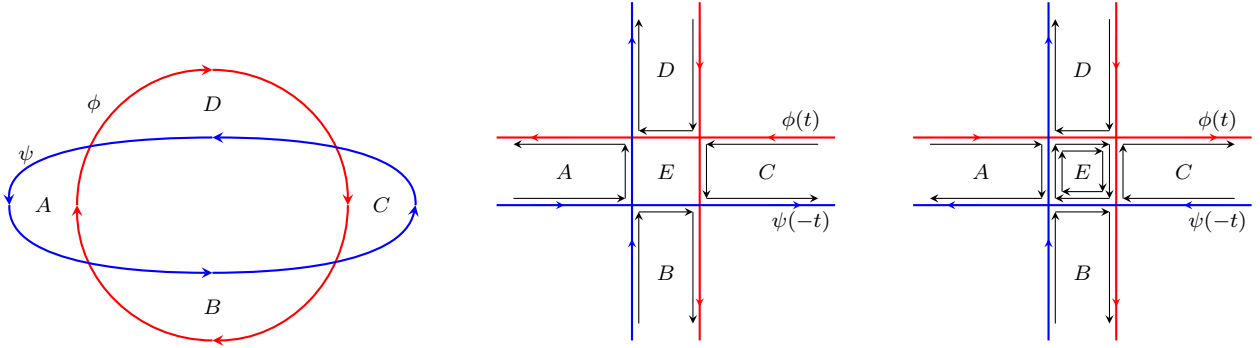

We now show that if the forms are smooth enough ($s \ge 2$), then the current map is differentiable.
In the case $s=1$, \autoref{prop:distanceperunit} indicates the map $[\cdot]$ is then not differentiable.

\begin{proposition}\label{prop:diff}
Let $s\ge 2$. Then the current map $[\cdot]\colon \LipImm(S^1,\Omega)\to H^{-s}\Lambda^1(\Omega)$ is differentiable.
\end{proposition}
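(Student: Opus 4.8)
The plan is to exhibit the Fréchet derivative explicitly by differentiating under the integral sign, and then to control the remainder with a Taylor-type estimate. Writing $\phi=(\phi_1,\phi_2)$, $h=(h_1,h_2)$ and $\alpha=\alpha_1\,\d x+\alpha_2\,\d y$, the candidate derivative of $[\cdot]$ at $\phi\in\LipImm(S^1,\Omega)$ in the direction $h\in W^{1,\infty}(S^1,\R^2)$ is the current
\begin{equation*}
  (D[\phi]\cdot h)(\alpha) := \int_0^1\sum_{i=1,2}\Bigl(\bigl(\nabla\alpha_i(\phi)\cdot h\bigr)\,\phi_i' + \alpha_i(\phi)\,h_i'\Bigr)\,\d t ,
\end{equation*}
obtained by formally applying $\frac{\d}{\d\epsilon}\big|_{\epsilon=0}$ to $[\phi+\epsilon h](\alpha)=\int_0^1\sum_i\alpha_i(\phi+\epsilon h)(\phi_i'+\epsilon h_i')\,\d t$. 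This expression is linear in $h$ and in $\alpha$.

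The first step would be to check that $h\mapsto D[\phi]\cdot h$ is a bounded linear map $W^{1,\infty}(S^1,\R^2)\to H^{-s}\Lambda^1(\Omega)$, and this is where the hypothesis $s\ge2$ is used. For $s\ge2$ each $\partial_j\alpha_i$ lies in $H^{s-1}(\Omega)\subseteq H^1(\Omega)$, so by the trace theorem (\autoref{rk:trace}) it has a well-defined $L^2$ trace on $\phi(S^1)$, and indeed on every curve $(\phi+v)(S^1)$ with $\norm{v}_\infty$ small, with a trace constant uniform over this family (the curves all lie in a fixed compact subset of $\Omega$ and have uniformly bounded Lipschitz constants). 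Combining $\norm{\nabla\alpha_i(\phi)}_{L^2(\phi(S^1))}\lesssim\norm{\alpha}_{H^s}$ with $\phi_i',h_i',h_i\in L^\infty$ exactly as in the proof of \autoref{prop:bounded} gives $\abs{(D[\phi]\cdot h)(\alpha)}\le C(\phi)\norm{h}_{W^{1,\infty}}\norm{\alpha}_{H^s}$. For $s=1$ this already breaks down, since then $\nabla\alpha$ is merely $L^2(\Omega)$ and has no trace on a curve, which is consistent with the non-differentiability recorded in \autoref{prop:distanceperunit}.

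The second step is the remainder. With $r:=[\phi+h]-[\phi]-D[\phi]\cdot h$, two applications of the fundamental theorem of calculus give
\begin{multline*}
  r(\alpha)=\sum_{i=1,2}\int_0^1\!\!\int_0^1\bigl(\nabla\alpha_i(\phi+\tau h)-\nabla\alpha_i(\phi)\bigr)\cdot h\,\phi_i'\,\d\tau\,\d t\\
  {}+\sum_{i=1,2}\int_0^1\!\!\int_0^1\nabla\alpha_i(\phi+\sigma h)\cdot h\,h_i'\,\d\sigma\,\d t .
\end{multline*}
The second sum is $O(\norm{h}_\infty\norm{h'}_\infty)\norm{\alpha}_{H^s}$ by the uniform trace bound from the first step, hence genuinely quadratic. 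For the first sum one gets $\abs{r(\alpha)}\le C(\phi)\norm{h}_\infty\,\omega_\phi(\norm{h}_\infty)$, where
\begin{equation*}
  \omega_\phi(\rho):=\sup_{\norm{\alpha}_{H^s}\le1}\;\sup_{\norm{v}_\infty\le\rho}\;\sum_{i=1,2}\int_0^1\bigl\lvert\nabla\alpha_i(\phi(t)+v(t))-\nabla\alpha_i(\phi(t))\bigr\rvert\,\d t .
\end{equation*}
Everything therefore reduces to showing $\omega_\phi(\rho)\to0$ as $\rho\to0$, and this is the main obstacle: the functions $g:=\nabla\alpha_i$ range over a bounded --- hence \emph{not} precompact --- subset of $H^1(\Omega)$, and the curve $(\phi+v)(S^1)$ itself moves with $v$, so one cannot simply invoke continuity of a single fixed trace operator.

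I would prove the quantitative bound $\omega_\phi(\rho)\lesssim_\phi\rho^{1/2}$ by working in a bi-Lipschitz tubular neighbourhood of $\Gamma:=\phi(S^1)$, with coordinates $(s,\nu)$ ($s$ the arclength along $\Gamma$, $\nu$ the transverse coordinate); when $\phi$ is not an embedding one first splits $S^1$ into finitely many arcs on which it is, as in the proofs of \autoref{prop:bounded} and \autoref{prop:cts}. Writing the displacement $v(t)$ in these coordinates as a tangential shift $\xi(t)$ in $s$ plus a normal displacement $\nu(t)$, with $\abs{\xi(t)},\abs{\nu(t)}\lesssim\norm{v}_\infty$, one bounds $g(\phi+v)-g(\phi)$ by the two model quantities $\norm{g(\cdot,\nu)-g(\cdot,0)}_{L^2(\Gamma)}$ and $\norm{g(\cdot+\xi,0)-g(\cdot,0)}_{L^2(\Gamma)}$. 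The first is controlled by the layer-cake estimate $\norm{g(\cdot,\nu)-g(\cdot,0)}_{L^2(\Gamma)}=\bigl\|\int_0^\nu\partial_\nu g\,\d u\bigr\|_{L^2(\Gamma)}\le\abs{\nu}^{1/2}\norm{g}_{H^1}$. The second is a translation of the \emph{fixed}-curve trace $\mathrm{Tr}_\Gamma g$, which by the trace theorem lies in $H^{1/2}(\Gamma)$ with $\norm{\mathrm{Tr}_\Gamma g}_{H^{1/2}(\Gamma)}\lesssim\norm{g}_{H^1(\Omega)}$; the half-derivative gained there is precisely what makes translation Hölder-$\tfrac12$ continuous in $L^2$ \emph{uniformly} over the unit ball of $H^1$, i.e.\ $\norm{\mathrm{Tr}_\Gamma g(\cdot+\xi)-\mathrm{Tr}_\Gamma g}_{L^2(\Gamma)}\lesssim\abs{\xi}^{1/2}\norm{g}_{H^1}$. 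Since $\norm{\nabla\alpha_i}_{H^1}\le\norm{\alpha}_{H^s}$, these combine to $\omega_\phi(\rho)\lesssim_\phi\rho^{1/2}$, so that $\norm{r}_{H^{-s}}=O\bigl(\norm{h}_{W^{1,\infty}}^{3/2}\bigr)=o\bigl(\norm{h}_{W^{1,\infty}}\bigr)$, which establishes Fréchet differentiability. The delicate point is thus the tangential shift --- uniform control of translation along $\Gamma$ over a bounded family of $H^1$ functions --- where the half-derivative smoothing in the trace theorem is exactly what is needed.
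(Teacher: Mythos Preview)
Your approach is correct and genuinely different from the paper's. The paper proceeds geometrically: it fixes a $C^1$ path $\xi(u,\cdot)$ of immersions through $\phi$, lifts it to a tubular surface in $[0,\varepsilon]\times\Omega$, applies Stokes' theorem, and obtains the derivative in the closed form
\[
  (D[\phi]\cdot X)(\alpha)=\int_{\phi(S^1)} \mathrm{i}_X\,\d\alpha,
\]
then observes that for $s\ge 2$ the trace theorem puts $\d\alpha\circ\phi$ in $H^{s-2}(S^1)$ so this integral is well-defined. Your formula is the same as theirs after one integration by parts on the $\alpha_i(\phi)h_i'$ terms (the boundary vanishes on $S^1$), so the candidates agree.

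What you do differently is work entirely in coordinates via Taylor expansion, and---more importantly---you actually control the \emph{remainder}, obtaining a Fr\'echet estimate $\norm{r}_{H^{-s}}=O(\norm{h}_{W^{1,\infty}}^{3/2})$. The paper's Stokes argument really only computes the directional (G\^ateaux) derivative and checks it is a bounded functional; it does not verify $o(\norm{h})$ behaviour of the remainder, nor continuity of $\phi\mapsto D[\phi]$. So your route gives a stronger conclusion, at the cost of more technical work on traces. The paper's route, in exchange, yields the clean geometric formula $\int\mathrm{i}_X\d\alpha$ directly and makes transparent why exactly one extra derivative (hence $s\ge 2$) is needed.

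One point in your argument deserves more care. The bi-Lipschitz tubular neighbourhood $(s,\nu)\mapsto\phi(s)+\nu N(s)$ you invoke need not exist for a general Lipschitz immersion: the unit normal $N$ is only $L^\infty$, and at a genuine corner the map fails to be injective on the concave side for any $\nu\neq 0$. Your instinct to localise to arcs is right, but on each arc you should replace the (discontinuous) normal by a \emph{fixed constant} transverse direction---since $\phi'$ stays in a half-plane on a short enough arc, such a direction exists---or, equivalently, use the linear sweep $\Psi(t,\sigma)=\phi(t)+\sigma v(t)$ together with the area formula to bound $\int_0^1\!\int_0^1|\nabla\alpha_i(\Psi)|^2\,\d\sigma\,\d t\lesssim\rho^{-1}\norm{\alpha}_{H^s}^2$. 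Either fix rescues the $\rho^{1/2}$ estimate; the $H^{1/2}$-trace translation bound you cite for the tangential part is correct as stated.
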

\begin{proof}
Let $\xi(u,\cdot)$ be a $C^{1}$ curve in $\LipImm(S^1,\Omega)$ with $\xi(0)=\phi$
and $\partial_{u}|_{u=0}\xi(u)=X$. Let $\alpha\in H^s\Lambda^1(\Omega)$. 
Recall that for a 2-form $\omega$, the interior derivative $\mathrm{i}_X \omega$ is defined by $\mathrm{i}_X(Y) := \omega(X,Y)$.
We will show that the derivative of the current map at $\phi$ in direction $X$ is $\int_{\phi(S^1)} {\rm i}_X{\rm d}\alpha$.
By the trace theorem (see \autoref{rk:trace}), ${\rm d}\alpha\circ\phi\in H^{s-2}(S^1)$, and hence because $s\ge 2$, the derivative exists.

To establish this formula for the derivative, we first consider the case that the curves $\xi(u,\cdot)$ are embeddings.
Let $\varepsilon>0$.  Let $\tilde\xi(u,t) := (u,\xi(u,t))$ and $\pi(u,\tilde\xi)=\tilde\xi$.
Then $\tilde\xi([0,\varepsilon],S^1)$ is a tubular surface $M_\varepsilon$ with boundary $\Gamma_\varepsilon := \tilde\xi(\varepsilon,S^1) - \varepsilon(0,S^1)$. Then, using Stokes's theorem in the last line,
\begin{equation*}
\begin{aligned}
\Delta_\varepsilon &:= \int_{\xi(\varepsilon,S^1)}\alpha - \int_{\xi(0,S^1)}\alpha \\
&= \int_{\tilde\xi(\varepsilon,S^1)}\pi^*\alpha - \int_{\tilde \xi(0,S^1)}\pi^*\alpha \\
&= \int_{\Gamma_\varepsilon}\pi^*\alpha \\
&= \int_{M_\varepsilon} {\rm d}\pi^*\alpha .
\end{aligned}
\end{equation*}
Letting $\varepsilon\to 0$,
\begin{equation*}
\begin{aligned}
\frac{d}{d\varepsilon}[\xi(\varepsilon,\cdot)](\alpha)\big|_{\varepsilon=0} &= 
\lim_{\varepsilon\to0}\frac{\Delta_\varepsilon}{\varepsilon} \\
&= \int_{\tilde\xi(0,S^1)}{\rm i}_{\tilde\xi_*\frac{\partial}{\partial u}} {\rm d}\pi^*\alpha \\
&= \int_{\phi(S^1)}{\rm i}_X{\rm d}\alpha.
\end{aligned}
\end{equation*}
In the case that the curves $\xi(u,\cdot)$ are immersions, the range of the curves may be lifted
from $\R^2$ to $\R^3$ and the curves perturbed slightly at the crossings  so that they become
embeddings in $\R^3$. Then the same formula for the derivative holds in $\R^3$ and,
letting the perturbation tend to zero, in $\R^2$.

\qed
\end{proof}

Thus when $s\ge 2$ one can define a continuous \emph{Riemannian} metric on shapes as the restriction of 
$H^{-s}\Lambda^1(\Omega)$ to the currents of shapes. While many families of Riemannian metrics on shapes have been studied \cite{michor2007overview}, this one appears to be new. However, in this paper we do not use
the induced Riemannian metric, but rather the  (`straight line') subset metric illustrated
in Figure~\ref{fig:main}, which is far easier to compute. 

\subsection{Representers of shapes}
\label{sec:representers}
From \autoref{prop:bounded} the  
Riesz representation theorem applies: the current $[\phi]$ determines a unique $\beta\in H^s\Lambda^m(\Omega)$ 
called the (Riesz) \emph{representer} of $[\phi]$, which satisfies:
\begin{equation}
\label{eq:riesz}
[\phi](\alpha) = (\beta,\alpha)_{H^s}
\end{equation}
for all $\alpha\in H^s\Lambda^m(\Omega)$.
We can also write
 \begin{equation}
 \label{eq:norm}
  \| [\phi]_{-s} \|^{2} := \| \beta \|_{H^s}^2 = [\phi](\beta).
 \end{equation}
 That is, two shapes are close in $H^{-s}$ if the representers of their currents
 are close in  $H^s$.

From the definition of the representer, which is a PDE in weak form,
the representer satisfies an elliptic PDE with source concentrated on the shape.
We now take a closer look at the PDE in strong form in the case where both the representer and the shape are smooth.

Suppose now that $\phi(S^1)$ is smooth, and that the representer $\beta$ is smooth.
From Green's theorem, for $s=1$ we have
\begin{align}
\notag
(\alpha,\beta)_{H^1_{\scale}} &= \int_\Omega \Big(\sum_{i=1}^2 \alpha_i\beta_i + \scale^2 \sum_{i,j=1}^2 
\frac{\partial\alpha_i}{\partial x_j}\frac{\partial\beta_i}{\partial x_j}\Big)\, \ud x_1\, \ud x_2\\
&= \int_\Omega \Big(\sum_{i=1}^2 \alpha_i(1-\scale^2\nabla^2)\beta_i\Big) \, \ud x_1 \ud x_2
+ \int_{\partial \Omega}  \beta_{i n}\alpha_i \ud s
\label{eq:hh2}
\end{align}
where $\ud s$ is arclength of the boundary,
and $\beta_{i n}$ is the derivative of $\beta_i$ normal to the boundary of $\phi(S^1)$.

Now let $\delta_{\phi(S^1)}$ be the measure on $\Omega$ with support $\phi(S^1)$ that obeys
\begin{equation*}
  \langle  \delta_{\phi(S^1)}, f \rangle  = \int_0^{1} f\big(\phi(t)\big) \|\phi'(t)\|\,\ud t
  .
\end{equation*}
for all smooth $f\colon \Omega\to\R$.
Using this definition, we have
	\begin{equation}
		\begin{aligned}
			[\phi](\alpha) &= \int_{S^1} \phi^*\alpha \\
			&= \int_0^{1} \langle \alpha(\phi(t)),\phi'(t)\rangle\, \ud t\\
			&= \int_0^{1} \langle \alpha(\phi(t)),\mathbf{t}(t)\rangle \|\phi'(t)\|\, \ud t\\
			& = \langle  \delta_{\phi(S^1)} , \langle \alpha  , \mathbf{t} \rangle \rangle
      ,
		\end{aligned}
	\end{equation}
  where $\langle  \alpha , \mathbf{t} \rangle$ denotes (with a slight abuse of notation) any extension of that function outside the curve $\phi(S^1)$.
	Therefore
	\begin{equation*}
\int_\Omega \Big(\sum_{i=1}^2 \alpha_i(1-\scale^2\nabla^2)\beta_i\Big) \, \ud x_1 \ud x_2
+ \int_{\partial \Omega}  \beta_{i n}\alpha_i \ud s
= \langle  \delta_{\phi(S^1)} , \langle  \alpha, \mathbf{t} \rangle \rangle
\end{equation*}
for all $\alpha\in H^1\Lambda^1(\Omega)$.
This gives the Helmholtz equation
\begin{equation}
\label{eq:hh} (1-\scale^2 \nabla^2)\beta_i = \delta_{\phi(S^1)} \mathbf{t},\qquad \beta_{i n} = 0\text{ on } \partial\Omega
.
\end{equation}

		For a small scale $\scale$, the representer $\beta$ can be identified with a vector field concentrated near the curve and roughly
	tangent to it (an example is shown in Figure \ref{fig:rep}).
	The representer can be written in terms of the Green's function of the Helmholtz operator. 
	For example, when $\Omega=\R^2$, the Green's function of $(1-\scale^2\nabla^2)$ is
	\[
  G(x) = \frac{1}{2 \pi \scale^2}\mathsf{K}_0\left(\frac{\|x\|}{\scale}\right)
\]
  where $\mathsf{K}_0$ is the modified Bessel function of the 2nd kind; then
	\[
\beta(x) = \int_a^b G(x-\phi(t))\frac{\phi'(t)}{\|\phi'(t)\|}\, \ud t
.
\]


As remarked earlier, for immersions, the currents do not determine the shape, as parts of the shapes
that retrace themselves are invisible to currents.
For embeddings the situation is better:

\begin{proposition}
\label{prop:emb}
Let $\phi_1,\phi_2\colon S^1\to \Omega$ be two Lipschitz embeddings of $S^1$ and let $s= 1$. 
If $[\phi_1] = [\phi_2]$ then
the curves represent the same oriented shape.
\end{proposition}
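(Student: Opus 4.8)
The plan is to argue by contradiction using the fact, established in \autoref{prop:bounded} and the surrounding discussion, that a current $[\phi]$ of an embedding can be tested against enough $1$-forms to detect any point not on the image, and to localize using the Stokes-type identity from the proof of \autoref{prop:cts}. Concretely, suppose $[\phi_1]=[\phi_2]$ but the oriented images $\phi_1(S^1)$ and $\phi_2(S^1)$ differ. Since $s=1$ and the curves are Lipschitz embeddings, each image is a rectifiable Jordan curve bounding a region; I would write $[\phi_i](\alpha)=\int_{\Omega_i}\d\alpha$ by Stokes's theorem, where $\Omega_i$ is the region enclosed by $\phi_i(S^1)$ with the induced orientation. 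Then $0=[\phi_1](\alpha)-[\phi_2](\alpha)=\int_{\Omega_1}\d\alpha-\int_{\Omega_2}\d\alpha$ for all $\alpha\in H^1\Lambda^1(\Omega)$; writing $\d\alpha=f\,\d x\wedge\d y$ with $f=\partial_x\alpha_2-\partial_y\alpha_1$, this says $\int_{\Omega_1}f=\int_{\Omega_2}f$ for every $f$ of that form. The key linear-algebra point is that as $\alpha$ ranges over $H^1\Lambda^1(\Omega)$, the function $f=\partial_x\alpha_2-\partial_y\alpha_1$ ranges over a dense subset of $L^2(\Omega)$ (indeed one can already realize any smooth compactly supported $f$, e.g.\ by taking $\alpha_2$ a primitive in $x$ and $\alpha_1=0$). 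Hence the indicator functions satisfy $\mathbf{1}_{\Omega_1}=\mathbf{1}_{\Omega_2}$ in $L^2(\Omega)$, so $\Omega_1$ and $\Omega_2$ agree up to a null set.

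From the equality of the enclosed regions up to measure zero I would recover equality of the oriented boundary curves. For a Lipschitz Jordan curve, the image is exactly the topological boundary of the (open) enclosed region, and two open sets that agree almost everywhere have the same closure when each is, say, the interior of its closure — which holds here because a Lipschitz Jordan domain is a regular open set. Thus $\phi_1(S^1)=\partial\Omega_1=\partial\Omega_2=\phi_2(S^1)$ as sets. The orientations then also match: the sign of $\int_{\Omega_i}f$ for a fixed test function supported near $\Omega_i$ pins down which co-orientation of the boundary was used, and since $\Omega_1=\Omega_2$ (not their complements) the induced boundary orientations coincide. Therefore $\phi_1$ and $\phi_2$ parameterize the same oriented shape, i.e.\ they are equal as elements of $\Imm(S^1,\Omega)/\Diff^+(S^1)$ (using that two embeddings with the same oriented image differ by an orientation-preserving reparameterization).

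The main obstacle I anticipate is not the functional-analytic density step but the point-set care needed around the Lipschitz boundary: justifying that a Lipschitz embedding of $S^1$ into $\Omega$ does enclose a well-defined region to which Stokes's theorem applies (a Jordan-curve / rectifiability argument), and that this region is regular open so that ``equal a.e.'' upgrades to ``equal boundaries.'' One must also handle the possibility that $\Omega$ itself has boundary or that the curve is not null-homotopic in $\Omega$; the cleanest fix is to note that the argument is local — testing against $\alpha$ supported in a small ball around any putative point of $\phi_1(S^1)\setminus\phi_2(S^1)$ already forces a contradiction via the density of the corresponding $f$'s — so no global topological hypothesis on $\Omega$ is needed. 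Everything else (the Cauchy--Schwarz-free direct computation, the Stokes identity, the Riesz picture) is routine and already present in the proofs of \autoref{prop:cts} and \autoref{prop:bounded}.
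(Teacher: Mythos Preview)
Your proposal is correct, but the route is considerably more elaborate than the paper's. The paper proves this in three lines by the direct local argument you relegate to a parenthetical fix at the end: assume some $x\in\phi_1(S^1)\setminus\phi_2(S^1)$; since $\phi_2(S^1)$ is compact, there is a neighbourhood $U$ of $x$ disjoint from it; choose $\alpha\in H^1\Lambda^1(\Omega)$ supported in $U$ with $[\phi_1](\alpha)\neq 0$ (possible because $\phi_1$ is an embedding, so locally a graph, and one can take e.g.\ a bump times $\d x$ integrated against a nonvanishing tangent component); then $[\phi_2](\alpha)=0$, contradiction. No Stokes, no Jordan curve theorem, no regularity of Lipschitz domains, no a.e.\ equality upgrading.

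Your global Stokes approach does work when the curves bound regions in $\Omega$, and it has the mild advantage of making the orientation conclusion explicit (the paper's proof only literally shows $\phi_1(S^1)=\phi_2(S^1)$ as sets, leaving the orientation step implicit: two embeddings with the same image differ by a reparameterization, and if it were orientation-reversing one would get $[\phi_1]=-[\phi_1]$, impossible for an embedding). But the machinery you invoke---rectifiable Jordan curves, Stokes for Lipschitz domains, regular-open upgrades from a.e.\ equality---is overkill here, and as you yourself note it does not go through directly when $\Omega$ has nontrivial topology. The local argument you sketch at the end \emph{is} the paper's proof and is the cleaner way in.
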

\begin{proof}
Suppose that a point $x$ in $\phi_1(S^1)$ is not in $\phi_2(S^1)$.
Then
there is a neighbourhood of $x$ which does not intersect $\phi_2(S^1)$ either.
Choose a 1-form $\alpha$ that has support in this neighbourhood.
As $\phi_1(S^1)$ is an embedding, the form $\alpha$ can be chosen such that $[\phi_1](\alpha) \neq 0$.
However, $[\phi_2](\alpha) = 0$, which gives a contradiction.
\qed
\end{proof}

However, currents do determine the shape of immersions if there is some control over the self-intersections:

\begin{proposition}
\label{prop:selfint}
Let $\phi_1,\phi_2\colon S^1\to \Omega$ be two Lipschitz immersions, each with
a finite number of self-intersections such that at each self-intersection the tangent vectors are
continuous and distinct.
Let $s= 1$. 
If $[\phi_1] = [\phi_2]$ then
the curves represent the same oriented shape.
\end{proposition}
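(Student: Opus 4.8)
The plan is to build on the embedding case in \autoref{prop:emb} by localizing away from the self-intersection points. First I would argue that the image sets coincide: if a point $x\in\phi_1(S^1)$ is \emph{not} a self-intersection of $\phi_1$ and not in $\phi_2(S^1)$, then by the same argument as in \autoref{prop:emb} there is a small neighbourhood $U$ of $x$ meeting $\phi_1(S^1)$ in a single embedded arc (because $x$ is not a self-intersection and there are only finitely many of them, $U$ can be taken to avoid all self-intersections and to avoid $\phi_2(S^1)$); choosing a $1$-form $\alpha$ supported in $U$ with $[\phi_1](\alpha)\neq 0$ while $[\phi_2](\alpha)=0$ gives a contradiction. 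Hence $\phi_1(S^1)\setminus(\text{self-intersections of }\phi_1)\subseteq\phi_2(S^1)$, and since the self-intersection set is finite and $\phi_2(S^1)$ is closed, we get $\phi_1(S^1)\subseteq\phi_2(S^1)$; by symmetry the two images are equal. Call this common set $\Sigma$; it is a finite graph whose vertices are the (finitely many) self-intersection points and whose edges are embedded arcs.

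Next I would pin down the orientations and multiplicities. Away from the finite vertex set, $\Sigma$ is locally a $1$-manifold; on each open embedded arc $e$ the immersion $\phi_j$ passes some integer number of times $n_j^+(e)$ in the positive direction and $n_j^-(e)$ in the negative direction, and the current restricted to forms supported near the interior of $e$ sees only the signed count $d_j(e) := n_j^+(e)-n_j^-(e)$. Testing against $1$-forms supported in a small neighbourhood of a generic interior point of $e$ — the same localization as above — and using that such a neighbourhood meets $\Sigma$ in the single arc $e$, the equality $[\phi_1]=[\phi_2]$ forces $d_1(e)=d_2(e)$ for every edge $e$. So the two immersions induce the same integer-weighted oriented $1$-cycle on $\Sigma$.

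Then I would use the hypothesis that at each self-intersection the tangent directions are distinct and the tangent is continuous: this says precisely that $\phi_j$, read through any self-intersection point $p$, matches up incoming and outgoing arcs in pairs according to which branch it is on, i.e. near $p$ the map $\phi_j$ looks like finitely many smooth arcs crossing transversally, each carrying a definite orientation. Combined with the fact that $\phi_j\colon S^1\to\Sigma$ is a single closed loop, the signed edge-weights $d_j$ together with the local crossing pattern at each vertex determine the loop up to orientation-preserving reparameterization: one reconstructs the cyclic sequence of edges traversed by following the curve, using transversality to decide which outgoing arc continues which incoming arc at each vertex (a branch entering $p$ along one tangent line exits along the same tangent line). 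Since $\phi_1$ and $\phi_2$ have the same image, the same vertex set with the same crossing (tangent-line) data, and the same signed weights on every edge, the reconstructed cyclic edge-words agree, and therefore $\phi_1$ and $\phi_2$ differ by an orientation-preserving diffeomorphism of $S^1$, i.e. represent the same oriented shape.

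The main obstacle, and the step needing the most care, is this last reconstruction: turning the collection of local data (signed edge-multiplicities plus the pairing of branches at each vertex dictated by distinct continuous tangents) into a genuine global reparameterization equivalence. In particular one must handle edges traversed with multiplicity $|d_j(e)|>1$ and the bookkeeping that at each vertex the Eulerian-type path is forced by the transversal branch structure; showing that the resulting cyclic word is well-defined and identical for $\phi_1$ and $\phi_2$ is where the ``finite self-intersections with distinct continuous tangents'' hypothesis is essential, since without it (e.g. tangential self-contact, or a curve retracing an arc) the signed weights alone fail to determine the loop, exactly as in the cosine example after \autoref{prop:current}.
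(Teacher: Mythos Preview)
Your argument follows the same two-step approach as the paper's proof: first use the localization idea of \autoref{prop:emb} to show the images (and the orientations on each embedded arc) coincide away from the finite self-intersection set, then invoke the continuity and distinctness of tangent directions at each crossing to glue the arcs into a unique closed loop. The paper's proof is considerably terser --- essentially two sentences --- so what you have written is a faithful expansion of it.

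One simplification worth noting: your worry about edges with $|d_j(e)|>1$ is unnecessary under the stated hypotheses. If an immersion $\phi_j$ traversed a nondegenerate arc $e$ more than once (in either direction), then every interior point of $e$ would be a self-intersection of $\phi_j$, contradicting finiteness of the self-intersection set. Hence each edge of the graph $\Sigma$ is traversed exactly once by each $\phi_j$, with $d_j(e)=\pm 1$, and the ``Eulerian bookkeeping'' you flag as the main obstacle reduces to the straightforward observation that at each vertex the branch-pairing by tangent line determines the successor arc uniquely. This is precisely the uniqueness the paper asserts, and with it the reconstruction step is immediate.
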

\begin{proof}
  As in the proof of \autoref{prop:emb}, the currents determine the images of $\phi_1$ and $\phi_2$ away from self-intersections.
  At the self-intersections, the hypothesis on
continuity and distinctness of the tangent vectors allows the non-self-intersecting pieces
of the shapes to be joined together in a unique way, thus determining the same oriented shape.
\qed
\end{proof}

The assumption on the self-intersections is necessary. Even without parts of curves
that retrace themselves, self-intersections---either with equal or discontinuous tangent vectors---%
prevent the current from recognising the shape up to $\Diff(S^1)$ reparameterizations 
(see Fig.~\ref{fig:selfintersect}). The current sees
only the image of the curve. 

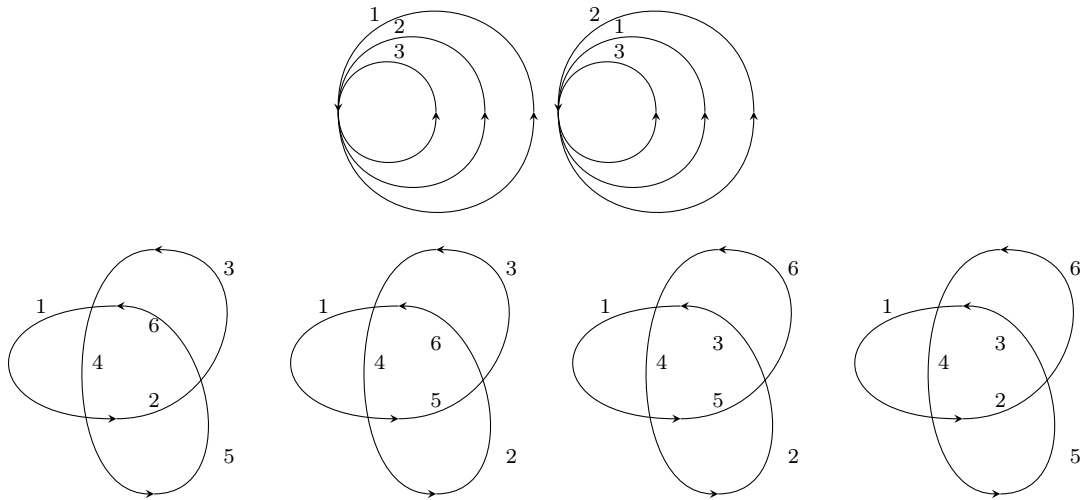
\begin{figure}
\centering
\begin{tikzpicture}[scale=0.65]
	\begin{pgfonlayer}{nodelayer}
		\node [style=none] (0) at (-5, 0) {};
		\node [style=none] (1) at (-1, 0) {};
		\node [style=none] (2) at (-2, 0) {};
		\node [style=none] (3) at (-3, 0) {};
		\node [style=none] (4) at (-4.25, 2) {1};
		\node [style=none] (5) at (-3.75, 1.75) {2};
		\node [style=none] (6) at (-3.75, 1.25) {3};
		\node [style=none] (7) at (0.75, 1.25) {3};
		\node [style=none] (8) at (3.5, 0) {};
		\node [style=none] (9) at (-0.5, 0) {};
		\node [style=none] (10) at (0.25, 2) {2};
		\node [style=none] (11) at (1.5, 0) {};
		\node [style=none] (12) at (2.5, 0) {};
		\node [style=none] (13) at (0.75, 1.75) {1};
	\end{pgfonlayer}
	\begin{pgfonlayer}{edgelayer}
		\draw [<-, bend left=90, looseness=1.75] (0.center) to (1.center);
		\draw [->, bend right=90, looseness=1.75] (0.center) to (3.center);
		\draw [->, bend right=90, looseness=1.75] (0.center) to (2.center);
		\draw [->, bend right=90, looseness=1.75] (0.center) to (1.center);
		\draw [bend left=90, looseness=1.75] (0.center) to (3.center);
		\draw [bend left=90, looseness=1.75] (0.center) to (2.center);
		\draw [<-, bend left=90, looseness=1.75] (9.center) to (8.center);
		\draw [->, bend right=90, looseness=1.75] (9.center) to (11.center);
		\draw [->, bend right=90, looseness=1.75] (9.center) to (12.center);
		\draw [->, bend right=90, looseness=1.75] (9.center) to (8.center);
		\draw [bend left=90, looseness=1.75] (9.center) to (11.center);
		\draw [bend left=90, looseness=1.75] (9.center) to (12.center);
	\end{pgfonlayer}
\end{tikzpicture}
\\
\begin{tikzpicture}
	\begin{pgfonlayer}{nodelayer}
		\node [style=none] (0) at (-6.75, 3.75) {};
		\node [style=none] (1) at (-7.25, 3) {};
		\node [style=none] (2) at (-7.25, 1.5) {};
		\node [style=none] (3) at (-6.75, 0.5) {};
		\node [style=none] (4) at (-8.25, 3) {1};
		\node [style=none] (5) at (-6.75, 1.75) {2};
		\node [style=none] (6) at (-5.75, 3.5) {3};
		\node [style=none] (7) at (-7.5, 2.25) {4};
		\node [style=none] (8) at (-5.75, 1) {5};
		\node [style=none] (9) at (-6.75, 2.75) {6};
		\node [style=none] (10) at (-3, 2.5) {6};
		\node [style=none] (11) at (-2, 3.5) {3};
		\node [style=none] (12) at (-2, 1) {2};
		\node [style=none] (13) at (-3.5, 1.5) {};
		\node [style=none] (14) at (-3.75, 2.25) {4};
		\node [style=none] (15) at (-3, 1.75) {5};
		\node [style=none] (16) at (-3, 0.5) {};
		\node [style=none] (17) at (-3, 3.75) {};
		\node [style=none] (18) at (-3.5, 3) {};
		\node [style=none] (19) at (-4.5, 3) {1};
		\node [style=none] (20) at (1.75, 1) {2};
		\node [style=none] (21) at (0.75, 3.75) {};
		\node [style=none] (22) at (1.75, 3.5) {6};
		\node [style=none] (23) at (0.75, 0.5) {};
		\node [style=none] (24) at (0, 2.25) {4};
		\node [style=none] (25) at (0.25, 1.5) {};
		\node [style=none] (26) at (0.25, 3) {};
		\node [style=none] (27) at (0.75, 2.5) {3};
		\node [style=none] (28) at (0.75, 1.75) {5};
		\node [style=none] (29) at (-0.75, 3) {1};
		\node [style=none] (30) at (5.5, 1) {5};
		\node [style=none] (31) at (4.5, 3.75) {};
		\node [style=none] (32) at (5.5, 3.5) {6};
		\node [style=none] (33) at (4.5, 0.5) {};
		\node [style=none] (34) at (3.75, 2.25) {4};
		\node [style=none] (35) at (4, 1.5) {};
		\node [style=none] (36) at (4, 3) {};
		\node [style=none] (37) at (4.5, 2.5) {3};
		\node [style=none] (38) at (4.5, 1.75) {2};
		\node [style=none] (39) at (3, 3) {1};
	\end{pgfonlayer}
	\begin{pgfonlayer}{edgelayer}
		\draw [->, bend right=90] (0.center) to (3.center);
		\draw [->, in=0, out=0, looseness=1.25] (3.center) to (1.center);
		\draw [->, bend right=90, looseness=3.25] (1.center) to (2.center);
		\draw [->, in=0, out=0, looseness=1.75] (2.center) to (0.center);
		\draw [->, bend right=90] (17.center) to (16.center);
		\draw [->, in=0, out=0, looseness=1.25] (16.center) to (18.center);
		\draw [->, bend right=90, looseness=3.25] (18.center) to (13.center);
		\draw [->, in=0, out=0, looseness=1.75] (13.center) to (17.center);
		\draw [->, bend right=90] (21.center) to (23.center);
		\draw [->, in=0, out=0, looseness=1.25] (23.center) to (26.center);
		\draw [->, bend right=90, looseness=3.25] (26.center) to (25.center);
		\draw [->, in=0, out=0, looseness=1.75] (25.center) to (21.center);
		\draw [->, bend right=90] (31.center) to (33.center);
		\draw [->, in=0, out=0, looseness=1.25] (33.center) to (36.center);
		\draw [->, bend right=90, looseness=3.25] (36.center) to (35.center);
		\draw [->, in=0, out=0, looseness=1.75] (35.center) to (31.center);
	\end{pgfonlayer}
\end{tikzpicture}
\caption{\label{fig:selfintersect}Currents see only the image of the curve and thus cannot distinguish some shapes.
In these shapes the numbers indicate the order in which the curves are traversed.
The shapes in each row have the same currents. In the top row, the curves are smooth,
but the curves intersect tangentially, and the current does not determine the
order of traversal.
In the bottom row, the current determines the shape
only if it is known that the curve is smooth at its self-intersections (bottom left); otherwise, there are four different orderings, that is, four distinct
elements of $\LipImm(S^1,\R^2)/\Diff^+(\R)$ have the same current.
}
\end{figure}

%
%

Insight into the $H^{-s}$ shape metric
is obtained by considering the target domain $\R^2$ and shapes consisting of vertical lines with periodic boundary conditions in $y$.  
\autoref{prop:distanceperunit}  illustrates both the non-differentiability of the current map $[\cdot]$ in the $s=1$ case and the difference between the $s=1$ and $s=2$ metrics. The $s=1$ metric weights nearby portions of the shapes more heavily than the $s=2$ metric does. It also shows the role of the length scale $\scale$; roughly, all curves
more than Euclidean distance $\scale$ apart are an equal distance apart in the $H^{-s}$ metrics.

We first prove an elementary Lemma.

\begin{lemma}
\label{prop:rkhs}
  Suppose that $H$ is a reproducing kernel Hilbert space of functions on $\R$.
  Let $\delta_x \in H^*$ denote the evaluation at the point $x\in \R$.
  Suppose further that the kernel is translation invariant, i.e., the representer for $\delta_x$ takes the form $x' \mapsto K(x' - x)$.
  Then the distance between $\delta_0$ and $\delta_{\epsilon}$ is
  \begin{equation}
    \| \delta_0 - \delta_{\epsilon} \| = \sqrt{2 (K(0) - K(\epsilon))}
    .
  \end{equation}
\end{lemma}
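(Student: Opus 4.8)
The plan is to reduce everything to the inner-product structure of $H$ and the Riesz representation of the evaluation functionals. First I would write $K_x$ for the representer of $\delta_x$, i.e.\ the unique element of $H$ with $\delta_x(f) = (f,K_x)_H$ for all $f\in H$; by the reproducing property this is exactly $K_x(\cdot) = K(\cdot - x)$ under the translation-invariance hypothesis. The key identity is then the reproducing kernel evaluation $(K_x,K_y)_H = \delta_x(K_y) = K_y(x) = K(x-y)$, which also gives $\|K_x\|_H^2 = K(0)$ for every $x$ (in particular $K(0)\ge 0$ and $K$ is real here because $H$ is a real Hilbert space).

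Next I would simply expand the squared norm of the difference. Since the Riesz map is linear, the representer of $\delta_0 - \delta_\epsilon$ is $K_0 - K_\epsilon$, and isometry of the Riesz map gives
\begin{equation*}
\|\delta_0 - \delta_\epsilon\|^2 = \|K_0 - K_\epsilon\|_H^2 = \|K_0\|_H^2 - 2(K_0,K_\epsilon)_H + \|K_\epsilon\|_H^2 = K(0) - 2K(\epsilon) + K(0) = 2\big(K(0) - K(\epsilon)\big).
\end{equation*}
Taking square roots yields the claimed formula, the nonnegativity of the right-hand side being automatic from the computation (it is a squared norm).

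There is essentially no obstacle here: the only thing to be careful about is bookkeeping — distinguishing the functional $\delta_x\in H^*$ from its representer $K_x\in H$, and invoking that the Riesz isometry identifies the dual norm on $H^*$ with the Hilbert norm on $H$ so that $\|\delta_0-\delta_\epsilon\|_{H^*} = \|K_0-K_\epsilon\|_H$. If one wants to be fully explicit about translation invariance one notes that $K_\epsilon(x') = K(x'-\epsilon)$ is the statement that the kernel $(x',x)\mapsto K(x'-x)$ is the reproducing kernel, so $(K_0,K_\epsilon)_H = K_\epsilon(0) = K(-\epsilon)$; symmetry of the inner product then forces $K(-\epsilon)=K(\epsilon)$, so the formula can equally be written with $K(-\epsilon)$. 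That is the entire argument.
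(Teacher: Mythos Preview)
Your proof is correct and follows essentially the same approach as the paper: represent each $\delta_x$ by its kernel element $K_x$, expand $\|K_0-K_\epsilon\|_H^2$ using the reproducing property, and invoke translation invariance to collapse the diagonal terms to $K(0)$. Your version is slightly more explicit about the symmetry $K(-\epsilon)=K(\epsilon)$, but the argument is the same.
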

\begin{proof}
  In general, if $K_x$ denotes the representer of $\delta_x$, we have
  \begin{align*}
    \| \delta_0 - \delta_{\epsilon} \|^2 &= (K_0 - K_{\epsilon}, K_0 - K_{\epsilon}) \\
                                         &= \langle   \delta_0 - \delta_{\epsilon}, K_0 - K_{\epsilon} \rangle \\
                                         &= K_0(0) - K_{\epsilon}(0) - K_0(\epsilon) + K_{\epsilon}(\epsilon) \\
    &= K_0(0) + K_{\epsilon}(\epsilon) - 2K_0(\epsilon)
  \end{align*}
  Using the translation invariance of the kernel, we have $K_{\epsilon}(\epsilon) = K_0(0)$, which finishes the proof.
  \qed
\end{proof}

Assuming periodic boundary conditions in $y$, the representer of a vertical line at $x=x_0$ is $K((x-x_0)/\scale)$ where $K$ is the corresponding one-dimensional kernel.
We define the \emph{distance per unit length} to be the distance in one dimension, with the corresponding one-dimensional kernel.

\begin{proposition}
  \label{prop:distanceperunit}
The distance per unit length between two straight lines a (Euclidean) distance $\epsilon$ 
apart is $\mathcal{O}(\epsilon^{1/2})$  when $s=1$ and $\mathcal{O}(\epsilon)$ when $s=2$.
\end{proposition}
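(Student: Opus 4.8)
The plan is to reduce the whole statement, via \autoref{prop:rkhs}, to the behaviour of a single one-dimensional reproducing kernel near the origin. With periodic boundary conditions in $y$, a vertical line $\{x=x_0\}$ corresponds, in the $x$-variable, exactly to the point evaluation $\delta_{x_0}$ on the one-dimensional Sobolev space $H$ whose inner product has Fourier symbol $(1+\scale^2\xi^2)^s$; this is the one-dimensional analogue of \autoref{def:sobolev}, the binomial (more generally multinomial) weights there being chosen precisely so that the associated operator is $(1-\scale^2\partial_x^2)^s$, as already seen for $s=1$ in \eqref{eq:hh}. Since $s\ge 1$, we have $2s>1$, so $\xi\mapsto(1+\scale^2\xi^2)^{-s}$ is integrable, $H$ is a reproducing kernel Hilbert space, and its kernel $K$ is characterised by $\hat K(\xi)=(1+\scale^2\xi^2)^{-s}$; as the operator has constant coefficients, $K$ is translation invariant. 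Hence \autoref{prop:rkhs} applies, and the distance per unit length between two lines a Euclidean distance $\epsilon$ apart equals $\|\delta_0-\delta_\epsilon\| = \sqrt{2\big(K(0)-K(\epsilon)\big)}$. It then remains to estimate $K(0)-K(\epsilon)$ as $\epsilon\to 0$, for $s=1$ and $s=2$.

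Next I would write $K$ down explicitly by inverting the symbol (a one-line residue computation). For $s=1$, $K$ is the one-dimensional Green's function of the Helmholtz operator,
\begin{equation*}
K(x) = \frac{1}{2\scale}\,e^{-|x|/\scale},
\end{equation*}
and for $s=2$ it is the Green's function of $(1-\scale^2\partial_x^2)^2$,
\begin{equation*}
K(x) = \frac{1}{4\scale^2}\,\big(|x|+\scale\big)\,e^{-|x|/\scale}
\end{equation*}
(in both cases one checks $\int_{\R}K = 1 = \hat K(0)$). The crux is the regularity of $K$ at the origin, which is controlled by the decay rate of $\hat K$: for $s=1$ the symbol decays only like $\xi^{-2}$, so $K$ is merely Lipschitz, with a corner at $0$ (one-sided derivatives $\pm\tfrac{1}{2\scale^2}$); for $s=2$ the two extra powers of decay make $K$ of class $C^2$, and since $K$ is even its first derivative vanishes at $0$ while $K''(0)=-\tfrac{1}{4\scale^3}<0$.

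Finally I would Taylor-expand. For $s=1$,
\begin{equation*}
K(0)-K(\epsilon) = \frac{1}{2\scale}\big(1-e^{-\epsilon/\scale}\big) = \frac{\epsilon}{2\scale^2} + \mathcal{O}(\epsilon^2),
\end{equation*}
whence $\|\delta_0-\delta_\epsilon\| = \scale^{-1}\epsilon^{1/2} + \mathcal{O}(\epsilon) = \mathcal{O}(\epsilon^{1/2})$ (and it is bounded below by a positive constant times $\epsilon^{1/2}$). For $s=2$, using $K'(0)=0$ and $K''(0)=-\tfrac{1}{4\scale^3}$,
\begin{equation*}
K(0)-K(\epsilon) = \frac{1}{4\scale^2}\Big(\scale-(\epsilon+\scale)e^{-\epsilon/\scale}\Big) = \frac{\epsilon^2}{8\scale^3} + \mathcal{O}(\epsilon^3),
\end{equation*}
whence $\|\delta_0-\delta_\epsilon\| = \tfrac12\scale^{-3/2}\epsilon + \mathcal{O}(\epsilon^2) = \mathcal{O}(\epsilon)$, as claimed.

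I do not expect a real obstacle; the only points requiring care are that $H$ is genuinely a reproducing kernel Hilbert space (which is why the hypothesis $s\ge1$ matters, so that \autoref{prop:rkhs} is legitimately applicable) and that the one-dimensional kernel is computed correctly. If one prefers to sidestep the explicit formulas, the same conclusions follow from $K(0)-K(\epsilon) = \frac{1}{2\pi}\int_{\R}\frac{1-\cos(\epsilon\xi)}{(1+\scale^2\xi^2)^s}\udd\xi$ by splitting the integral at $|\xi|\sim 1/\epsilon$ and using $1-\cos(\epsilon\xi)\le\min\{2,\tfrac12\epsilon^2\xi^2\}$ for the upper bound and $1-\cos(\epsilon\xi)\ge c\,\epsilon^2\xi^2$ on $|\xi|\le 1/\epsilon$ for the lower bound; the dichotomy between the two cases is exactly that $\int^{\infty}\xi^2(1+\scale^2\xi^2)^{-s}\udd\xi$ diverges for $s=1$ but converges for $s=2$.
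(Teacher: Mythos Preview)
Your argument is correct and follows essentially the same route as the paper: reduce to the one-dimensional reproducing kernel via \autoref{prop:rkhs}, write down the explicit Green's functions $K(x)=\tfrac{1}{2\scale}e^{-|x|/\scale}$ for $s=1$ and $K(x)=\tfrac{1}{4\scale^2}(|x|+\scale)e^{-|x|/\scale}$ for $s=2$, and expand $K(0)-K(\epsilon)$ near $\epsilon=0$. You supply more justification than the paper (why $H$ is an RKHS, the Fourier characterisation of $K$, the alternative integral estimate), but the method is the same.
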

\begin{proof}
Using \autoref{prop:rkhs}, the distance by unit length is thus
\(
   \sqrt{2(K(0) - K(\epsilon))}
\).

For $s=1$, $K(x)=\frac{1}{2}\mathrm{e}^{-|x|}$ and the distance per unit length between the lines is
$(1-\mathrm{e}^{-\epsilon/\scale})^{1/2} = \mathcal{O}(\epsilon^{1/2})$
as $\epsilon\to 0$.

For $s=2$, $K(x) = \frac{1}{4}\mathrm{e}^{-|x|}(1 + |x|)$ and the distance per unit length between the lines is
$(\frac{1}{2}(1-\mathrm{e}^{-(\epsilon/\scale)}(1+\epsilon/\scale)))^{1/2} = \mathcal{O}(\epsilon)$
as $\epsilon\to 0$.
\qed
\end{proof}

We now turn to a practical implementation of shape representation using currents that is based on the finite element method. 

\section{Discretization of currents by finite elements}
\label{sec:finel}

The discretization of the representation of shapes by currents proceeds in two steps:
\begin{itemize}
\item[(i)] Discretization of the space of currents and approximation of the currents of shapes; and
\item[(ii)] Discretization of the metric on currents.
\end{itemize}
We shall consider these separately, as their numerical properties are somewhat independent; item (i) 
determines how accurately the shapes themselves are represented, while item (ii) determines the
geometry of the induced discrete shape space.

\subsection{Discretization of currents}
We first return to the general setting in which we work with  immersions $\phi\colon M\to N$ where $M$ and $N$ are manifolds
and with their currents $[\phi](\alpha) = \int_M\phi^*\alpha$ for $\alpha\in H^s\Lambda^m(N)$.
The discretization of currents requires the choice of three things:
\begin{itemize}
\item[(i)] A space $V$ of finite elements on $M$;
\item[(ii)] A space $W$ of finite elements on $\Lambda^m(N)$;
\item[(iii)] A method of evaluating or approximating $[\phi_V]|_W$, where $\phi_V$ is
the finite element representation of $\phi$ in $V$.
\end{itemize}
We will explore analytically and numerically the ability of $[\phi_V]|_W$ to represent shapes in different settings.

To illustrate the ideas, we first consider an extremely simple example, namely sets of $n$ points on a line.

\begin{example}
Let $M=\{1,\dots,n\}$ and $N=\R$. A shape is then an unordered set of $n$ points in~$\R$ with isotropy the group of permutations.
In this case $M$ does not need to be discretized.
We choose a uniform mesh with spacing $\Delta x$ on $\R$ and let $W$ be the piecewise polynomials of degree at most $d$ on the mesh.
Let $w(x)\ud x\in W$. Then 
$$[\phi](w(x)\ud x) = \int_M \phi^*(w(x)\ud x) = \sum_{j=1}^n w(x_j).$$

The simplest case is $d=0$, i.e., piecewise constant elements. 
For these, the currents count how many points are in each cell. Therefore these elements represent
the shape with an accuracy of $\O(\Delta x)$, as there is no way to tell where in each cell 
the points are located.

The next case is $d=1$, i.e., piecewise linear elements. 
The piecewise constants determine how many points are in each cell, 
and the linear elements determine $\sum_{j\colon x_j\in W_i}x_j$, that is, the mean location of the points in cell $W_i$.
If there is at most 1 point in each cell, then the representation is perfect, as the points are located exactly. If
there is more than 1 point in a cell, then these elements represent the shape with an 
accuracy of  $\O(\Delta x)$. Notice how the use of currents factors out the parameterization
of the shape, i.e., it is invariant under  permutations of $M$.

With piecewise elements of degree $d$, the number of points in each cell and their first $d$ moments
are determined; if there are at most $d$ points in each cell, the points are located exactly. This is because
the moment equations on $[0,\Delta x]$ (for example) are $\sum_{j\colon x_j\in W_i} (x_j)^i = c_i$. For given values of the moments
$c_i$, this set of polynomial equations has total degree $d!$, hence at most $d!$ solutions; but if there
is one real solution, then any permutation of the $x_j$ is a solution; hence the moments determine
the points up to ordering.
\end{example}

We now consider our main example of oriented closed planar curves. Let $M=S^1$ and let $N=\Omega$, a domain in $\R^2$. We will choose $V$ to be the continuous piecewise polynomials of a given degree on a uniform mesh on $S^1$
and $W$ to be either the discontinuous or the continuous piecewise polynomials of a given degree on a fixed mesh on $\Omega$. We will take the finite element representation $\phi_V$ of $\phi$ to be the element of $V$ that interpolates $\phi$ at the finite element nodes. The current $[\phi_V]|_W$ is then given by the integral of a piecewise polynomial function. This can be evaluated exactly; however, in this study we evaluate $[\phi_V]_W$ by quadrature, either the midpoint rule for
piecewise linear elements or Simpson's rule for piecewise quadratics. As we are interested in fairly compact 
representations of shapes, we will pick the meshsize of $V$ to be much smaller than the 
meshsize of $W$.

In the following subsections we study the behaviour of finite element currents with regard to
(i) quadrature errors, with and without noise; (ii) accuracy of representation of shapes; and (iii) accuracy
of the induced metric on shapes.

\subsection{Quadrature errors and robustness of currents.}
\label{sec:quadrature}
One strong motivation for considering currents, as opposed to other possible
shape invariants such as those based on arclength (cf. \autoref{prop:current}), is that---%
essentially because they are signed---%
currents are expected to be robust against noise and to function well on quite rough shapes. 
We present three examples
measuring different aspects of robustness.

\begin{example} In this example the shapes are rough, but there is no noise.
  \label{ex:errornonsmooth}
\autoref{fig:errornonsmooth} shows the quadrature errors for 3 rough shapes
as a function of the number of points used to discretize them. In this and the following example, 
the $H^{-1}$ shape norm is discretized using polynomials of degree 9 on $[-1,1]^2$, although only
the quadrature error is reported. Shapes (a) and (b) are continuous, but not Lipschitz continuous,
and yet the quadrature errors are well under control, and for shape (b) are even $\O((\Delta s)^2)$ (where $\Delta s$ is the mean mesh spacing).
\end{example}

\begin{example}
\label{ex:error}
In this example, shown in \autoref{fig:error}, the shape is smooth, but different levels of noise are added. Thus, the exact value
to which we compare is the zero noise, zero mesh spacing limit. Two different quadratures
(the midpoint and Simpson's rule) are compared. Although high levels of noise can dominate the quadrature
error, they do not prevent the computation of highly accurate values for the currents until
the noise level is actually greater than the mesh spacing $\Delta s$.
\end{example}

\begin{example}
  \label{ex:errorsimple}
We compare the sensitivity of currents to that of arclength-based currents in \autoref{fig:errorsimple}.
The reference shape is a unit line segment, discretized with uniform mesh spacing $\Delta s$, and
independent normally-distributed noise of mean 0 and standard deviation $\epsilon$ is
added to the $x$ and $y$ components of each point except the endpoints. The current $\int y\, {\textrm d}x$ is computed by the midpoint approximation $\sum(y_{i+1}-y_i)(x_{i+1}-x_i)$.
For $\epsilon\ll \Delta s$,
the errors accumulate like a sum of random variables, the error being $N(0,\epsilon(\Delta s)^{\frac{1}{2}})$. 
For $\epsilon\gtrsim\Delta s$, nonlinear effects create a larger error of size $\O(\epsilon(\Delta s)^{-\frac{1}{2}})$.
Overall, good results are obtained even with noise levels $\epsilon = \O(\Delta s)$.
In contrast, the positive nature of length means that the arclength approximation
$\sum((x_{i+1}-x_i)^2 + (y_{i+1}-y_i)^2)^{\frac{1}{2}}$ accumulates errors of size
$\O((1+(\epsilon/\Delta s)^2)^{\frac{1}{2}}-1)$.
These are $\O(\epsilon^2/(\Delta s)^2)$ for small $\epsilon$ and $\O(\epsilon/\Delta s)$ for larger $\epsilon$. Overall, $\epsilon = o(\Delta s)$ is necessary for acceptable results.
\end{example}

\begin{figure}
\begin{minipage}[b]{0.4\textwidth}
\includegraphics[width=\textwidth]{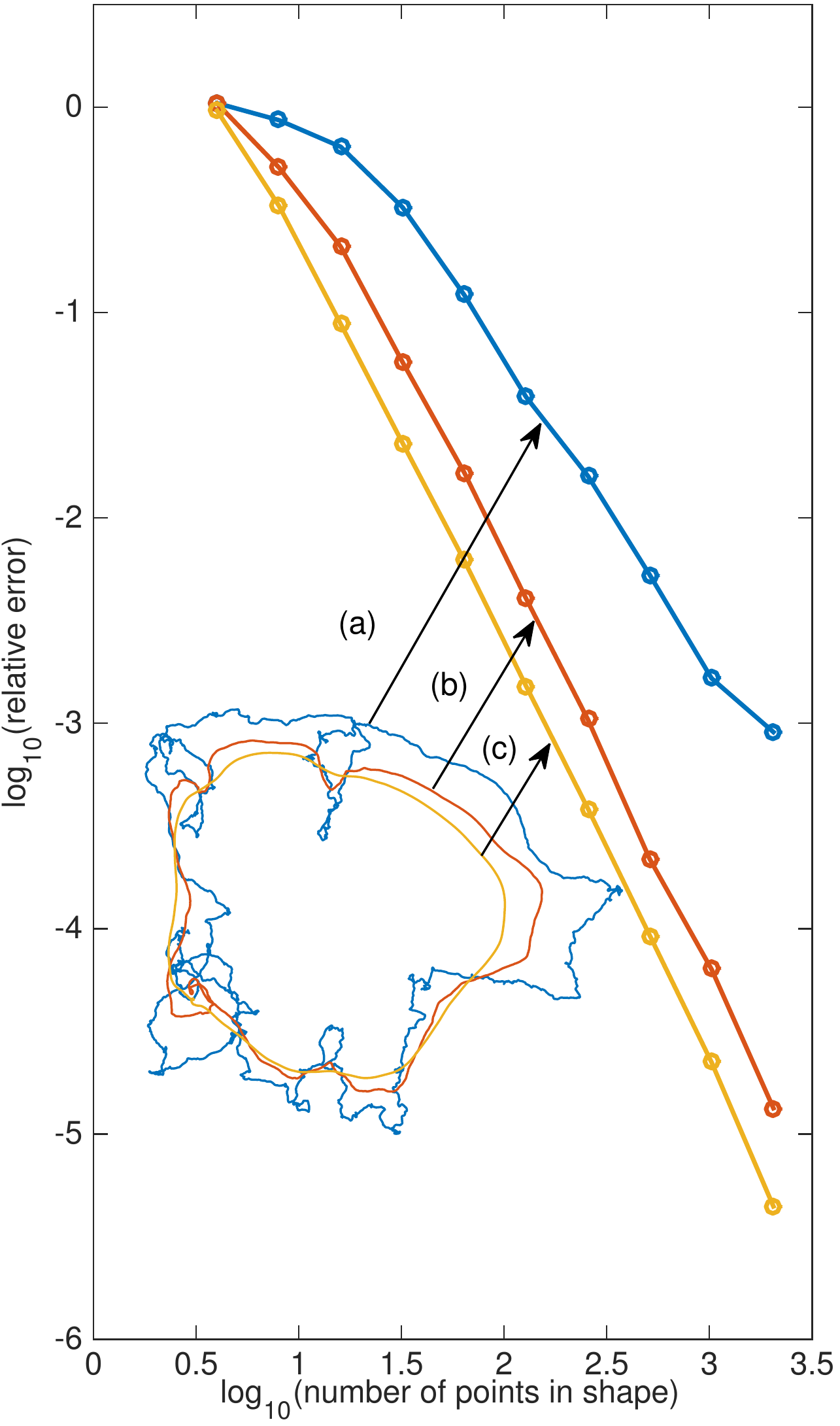}
\end{minipage}\hfill
\begin{minipage}[b]{0.55\textwidth}
\caption{\label{fig:errornonsmooth}
  \autoref{ex:errornonsmooth}:
Discretized currents can describe  rough shapes robustly. Here, 3 shapes of different smoothness are captured at different resolutions and the quadrature error in their norms (as computed with the midpoint rule) compared. Shape (a) has Fourier coefficients  $\tilde\phi_k=\O(|k|^{-3/2})$; it is in $H^p$ for $p<1$ and is continuous, but not differentiable. Shape (b) has Fourier coefficients $\tilde\phi_k=\O(|k|^{-2})$; it is in $H^p$ for $p<3/2$ and is continuous, but just fails to be differentiable. Shape (c) has Fourier coefficients
$\tilde\phi_k=\O(|k|^{-3})$; it is nearly twice differentiable. Even the very rough shape (a) can be represented with an error
of 1\% using 256 points.
}
\end{minipage}
\end{figure}

\begin{figure}
\begin{minipage}[b]{0.4\textwidth}
\includegraphics[width=\textwidth]{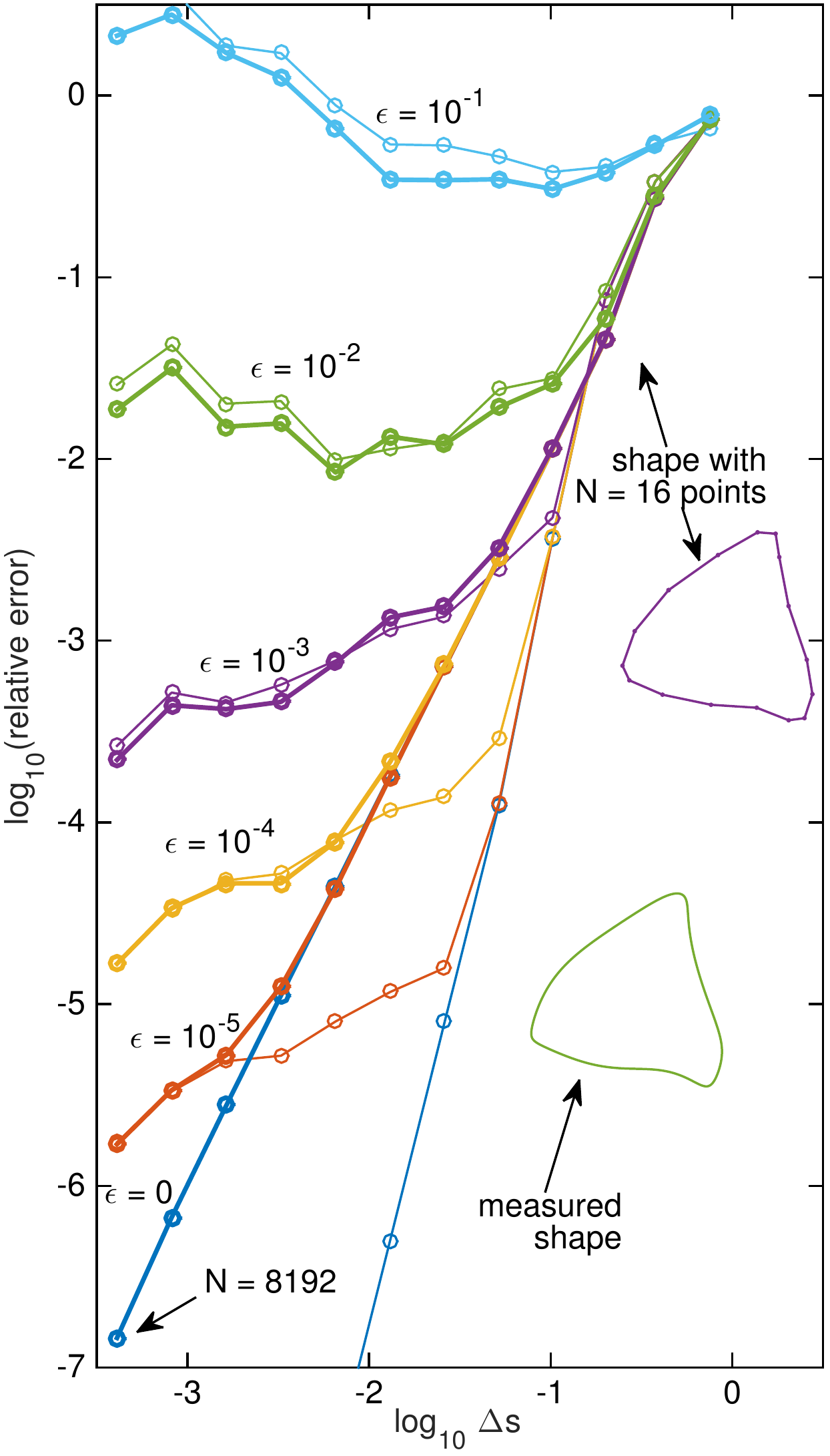}
\end{minipage}\hfill
\begin{minipage}[b]{0.55\textwidth}
\caption{\label{fig:error}
\autoref{ex:error}:
Errors in the shape norm $\|\phi\|^*$ calculated for different resolutions and noise levels.
The shape to be measured  is smooth and has 13 random nonzero Fourier coefficients. The relative
error in its shape is shown as calculated by the midpoint rule (thick lines) and
by Simpson's rule (thin lines), as a function of the mean mesh spacing $\Delta s := \|\phi(t_{i+1})-\phi(t_i)\|$. 
Independent normally distributed noise with standard
deviation $\epsilon$ is added to each point. When $\epsilon=0$, the 
 error is $\O((\Delta s)^2)$  for the midpoint rule and $\O((\Delta s)^4)$ for Simpson's rule, as expected.
The shape can be represented with an error of 5\%
with 16 points and a noise of 0.05 (5\%), or with an error of 0.1\% with 256 points and noise
of 0.1\%.
}
\end{minipage}
\end{figure}

\begin{figure}
\begin{minipage}[b]{0.4\textwidth}
\includegraphics[width=\textwidth]{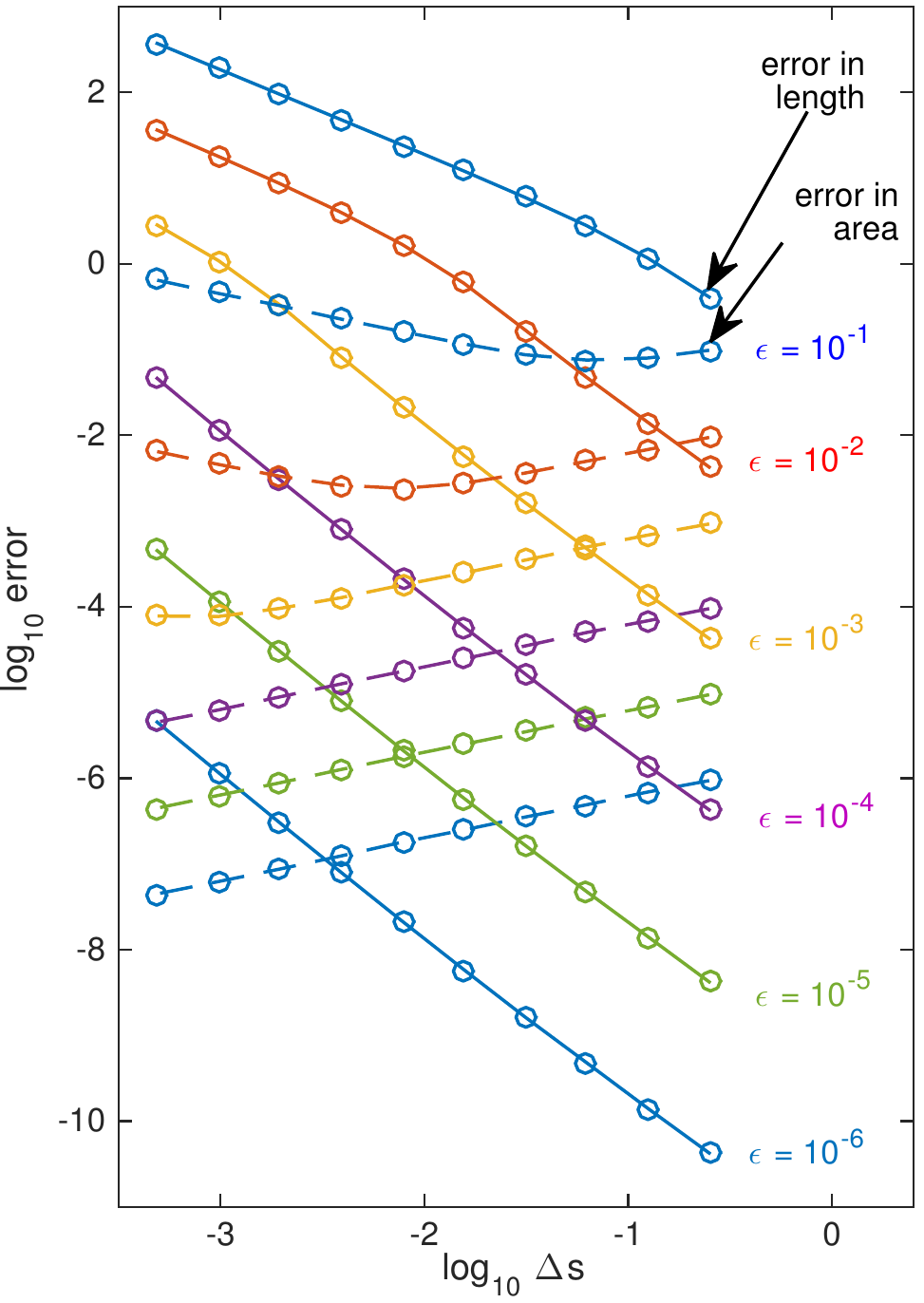}
\end{minipage}\hfill
\begin{minipage}[b]{0.55\textwidth}
\caption{\label{fig:errorsimple}
\autoref{ex:errorsimple}:
A comparison of the robustness of currents vs. arclength-based invariants.
Here the reference shape is the unit line segment $([0,1],0)$ discretized
with mesh spacing $\Delta s$. The current $\int y\, \d x $, the signed
area under the curve, is
calculated using the midpoint rule, and the arclength is calculated from
a piecewise linear approximation. The data $x_i$ and $y_i$, except
the endpoints, are perturbed with many repeated trials of
independent Gaussian noise of mean 0 and standard deviation $\epsilon$.
The 2-standard-deviation limits of the error in the results are shown.
For the current, the errors (dashed lines) are $\O(\epsilon (\Delta s)^{1/2} + 
\epsilon (\Delta s)^{-1/2})$, so that excellent results are obtained
even with $\epsilon\sim\Delta s$; for the arclength (solid lines), the errors
are $\O(\epsilon/\Delta s)^2)$ for small $\epsilon$, so that $\epsilon=o(\Delta s)^2$
is required for acceptable results.
}
\end{minipage}
\end{figure}

\subsection{Accuracy of representation of shapes}
\label{sec:approx}

In this section we consider simple closed planar shapes and study how accurately they are represented by discontinuous finite elements. Let $M=S^1$ and let $N=\Omega$, a domain in $\R^2$.  Let $\phi\colon S^1\to\Omega$ be a smooth embedding. Take a triangular mesh on $\Omega$
consisting of triangles $\mathcal{T}$ of maximum diameter $\Delta x$.
Let $W$ be the space of discontinuous piecewise polynomial finite elements of degree $\le d$. We are studying the effect of the discretization of the ambient space $N$, so we do not discretize $M$; we assume that all currents are evaluated exactly. We assume that the currents $[\phi]|_W$ are given and we
 want to know how accurately they determine the shape.

\begin{figure}
\centering
\begin{tikzpicture}
	\begin{pgfonlayer}{nodelayer}
		\node [style=none] (0) at (-1.25, 1.5) {};
		\node [fill=green, circle, style=none, minimum size=1.5 mm] (1) at (-1.65, 0.1) {};
		\node [fill=blue, circle, style=none, minimum size=1.5 mm] (2) at (-2.1, 1.7) {};
		\node [style=none] (3) at (-3.5, 2) {};
		\node [fill=blue, circle, style=none, minimum size=1.5 mm] (4) at (-2.1, 0.2) {};
		\node [style=none] (5) at (-1.75, -0.25) {};
		\node [fill=green, circle, style=none, minimum size=1.5 mm] (6) at (-1.65, 1.6) {};
	\end{pgfonlayer}
	\begin{pgfonlayer}{edgelayer}
		\draw (3.center) to (0.center);
		\draw (5.center) to (0.center);
		\draw (3.center) to (5.center);
		\draw [color=blue] (4.center) to (2.center);
		\draw [color=green] (6.center) to (1.center);
	\end{pgfonlayer}
\end{tikzpicture}
\caption{\label{fig:2segments}Any nonzero vector can be placed in at most two positions in any triangle such that the vector's head and tail lie on the sides of the triangle. 
If there are two such positions, then they are incident on different pairs of sides.
There is one such position when the vector is parallel to one of the sides.
}
\end{figure}
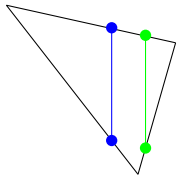

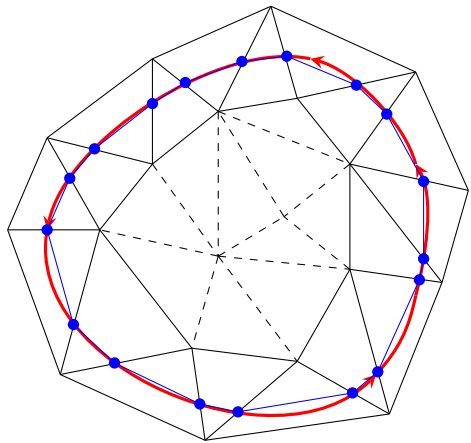
\begin{figure}
\centering
\begin{tikzpicture}[scale=0.7]
	\begin{pgfonlayer}{nodelayer}
		\node [fill=blue, circle, style=none, minimum size=1.5 mm] (0) at (-5, 0) {};
		\node [style=none] (1) at (0, 3.25) {};
		\node [style=none] (2) at (2, 1.25) {};
		\node [style=none] (3) at (2, -1.25) {};
		\node [style=none] (4) at (1.25, -2.75) {};
		\node [style=none] (5) at (-5.75, 0) {};
		\node [style=none] (6) at (-4, 0) {};
		\node [style=none] (7) at (-5, 1.75) {};
		\node [style=none] (8) at (-3, 1.25) {};
		\node [style=none] (9) at (-3, 3.25) {};
		\node [style=none] (10) at (-1.75, 2.25) {};
		\node [style=none] (11) at (-0.75, 4.25) {};
		\node [style=none] (12) at (-0.25, 2.5) {};
		\node [style=none] (13) at (2, 3) {};
		\node [style=none] (14) at (0.75, 1.25) {};
		\node [style=none] (15) at (3, 0.75) {};
		\node [style=none] (16) at (0.75, -0.75) {};
		\node [style=none] (17) at (-0.25, -2.5) {};
		\node [style=none] (18) at (-2, -4) {};
		\node [style=none] (19) at (-2.25, -2.25) {};
		\node [style=none] (20) at (-4.75, -2.75) {};
		\node [style=none] (21) at (-0.5, 0.25) {};
		\node [style=none] (22) at (-1.75, -0.5) {};
		\node [style=none] (23) at (2.5, -1) {};
		\node [style=none] (24) at (1.5, -3.5) {};
		\node [fill=blue, circle, style=none, minimum size=1.5 mm] (25) at (2.15, 0.92) {};
		\node [fill=blue, circle, style=none, minimum size=1.5 mm] (26) at (2.15, -0.55) {};
		\node [fill=blue, circle, style=none, minimum size=1.5 mm] (27) at (2.07, -0.95) {};
		\node [style=none] (28) at (1.25, -2.75) {};
		\node [fill=blue, circle, style=none, minimum size=1.5 mm] (29) at (1.28, -2.7) {};
		\node [fill=blue, circle, style=none, minimum size=1.5 mm] (30) at (0.8, -3.1) {};
		\node [fill=blue, circle, style=none, minimum size=1.5 mm] (31) at (-1.375, -3.46) {};
		\node [fill=blue, circle, style=none, minimum size=1.5 mm] (32) at (-2.1, -3.31) {};
		\node [fill=blue, circle, style=none, minimum size=1.5 mm] (33) at (-3.72, -2.53) {};
		\node [fill=blue, circle, style=none, minimum size=1.5 mm] (34) at (-4.5, -1.8) {};
		\node [fill=blue, circle, style=none, minimum size=1.5 mm] (35) at (-4.57, 0.98) {};
		\node [fill=blue, circle, style=none, minimum size=1.5 mm] (36) at (-4.1, 1.54) {};
		\node [fill=blue, circle, style=none, minimum size=1.5 mm] (37) at (-3, 2.4) {};
		\node [fill=blue, circle, style=none, minimum size=1.5 mm] (38) at (-2.375, 2.8) {};
		\node [fill=blue, circle, style=none, minimum size=1.5 mm] (39) at (-1.3, 3.2) {};
		\node [fill=blue, circle, style=none, minimum size=1.5 mm] (40) at (-0.45, 3.3) {};
		\node [fill=blue, circle, style=none, minimum size=1.5 mm] (41) at (0.8725, 2.75) {};
		\node [fill=blue, circle, style=none, minimum size=1.5 mm] (42) at (1.45, 2.2) {};
	\end{pgfonlayer}
	\begin{pgfonlayer}{edgelayer}
		\draw [very thick, color=red, <-, bend left, looseness=0.75] (1.center) to (2.center);
		\draw [very thick, color=red, <-, in=75, out=-60, looseness=0.75] (2.center) to (3.center);
		\draw [very thick, color=red, bend left=15] (3.center) to (4.center);
		\draw [very thick, color=red, <-, in=260, out=225] (4.center) to (0.center);
		\draw [very thick, color=red, <-, bend left=45, looseness=0.75] (0.center) to (1.center);
		\draw (5.center) to (7.center);
		\draw (7.center) to (6.center);
		\draw (6.center) to (5.center);
		\draw (7.center) to (8.center);
		\draw (8.center) to (6.center);
		\draw (9.center) to (8.center);
		\draw (9.center) to (7.center);
		\draw (9.center) to (10.center);
		\draw (10.center) to (8.center);
		\draw (10.center) to (11.center);
		\draw (11.center) to (9.center);
		\draw (10.center) to (12.center);
		\draw (12.center) to (11.center);
		\draw (12.center) to (13.center);
		\draw (13.center) to (11.center);
		\draw (14.center) to (13.center);
		\draw (12.center) to (14.center);
		\draw (14.center) to (15.center);
		\draw (15.center) to (13.center);
		\draw (23.center) to (15.center);
		\draw (14.center) to (23.center);
		\draw (16.center) to (23.center);
		\draw (16.center) to (14.center);
		\draw (16.center) to (24.center);
		\draw (24.center) to (23.center);
		\draw (16.center) to (17.center);
		\draw (17.center) to (24.center);
		\draw (24.center) to (18.center);
		\draw (18.center) to (17.center);
		\draw (19.center) to (18.center);
		\draw (19.center) to (17.center);
		\draw (19.center) to (20.center);
		\draw (20.center) to (18.center);
		\draw (6.center) to (19.center);
		\draw (6.center) to (20.center);
		\draw (20.center) to (5.center);
		\draw [dashed] (10.center) to (14.center);
		\draw [dashed] (14.center) to (21.center);
		\draw [dashed] (21.center) to (10.center);
		\draw [dashed] (22.center) to (21.center);
		\draw [dashed] (8.center) to (22.center);
		\draw [dashed] (22.center) to (6.center);
		\draw [dashed] (22.center) to (19.center);
		\draw [dashed] (22.center) to (17.center);
		\draw [dashed] (22.center) to (16.center);
		\draw [dashed] (16.center) to (21.center);
		\draw [dashed] (10.center) to (22.center);
		\draw [color=blue] (41.center) to (40.center);
		\draw [color=blue] (40.center) to (39.center);
		\draw [color=blue] (39.center) to (38.center);
		\draw [color=blue] (38.center) to (37.center);
		\draw [color=blue] (37.center) to (36.center);
		\draw [color=blue] (36.center) to (35.center);
		\draw [color=blue] (35.center) to (0.center);
		\draw [color=blue] (0.center) to (34.center);
		\draw [color=blue] (34.center) to (33.center);
		\draw [color=blue] (33.center) to (32.center);
		\draw [color=blue] (32.center) to (31.center);
		\draw [color=blue] (31.center) to (30.center);
		\draw [color=blue] (30.center) to (29.center);
		\draw [color=blue] (29.center) to (27.center);
		\draw [color=blue] (27.center) to (26.center);
		\draw [color=blue] (26.center) to (25.center);
		\draw [color=blue] (25.center) to (42.center);
		\draw [color=blue] (42.center) to (41.center);
	\end{pgfonlayer}
\end{tikzpicture}
\caption{\label{fig:pc} Approximation of a shape by piecewise constant currents (built up from Figure~\ref{fig:easy}). A smooth curve
is shown in red and a triangular finite element mesh in black. The unique approximating curve that is continuous, 
is linear on each triangle, and interpolates the shape at the element edges is shown in blue.
On each  triangle, the currents evaluated on the constant 1-forms ${\textrm d}x$ and ${\textrm d}y$ determine the jumps in $x$ and $y$ of the shape across the triangle. These jumps determine (i) if the shape intersects the triangle; and (ii) at most two line segments with the given jumps. From these two, 
the line segment that meets other active edges is selected, yielding the second-order accurate approximant shown.}
\end{figure}

First we consider  piecewise constant elements, that is, $d=0$.

\begin{proposition}
\label{prop:constants}
For smooth embeddings $\phi\colon S^1\to\Omega\subset\R^2$, and a triangular mesh
$\mathcal{T}$ of sufficiently small mesh size $\Delta x$, the currents of
$\phi$ evaluated on 1-forms constant on each triangle $T\in{\mathcal T}$ determine
a piecewise linear approximation $\hat\phi$ of $\phi$ of pointwise second order accuracy.
\end{proposition}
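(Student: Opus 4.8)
The plan is to show that, for a mesh that is fine and in general position with respect to the fixed curve $\phi$, the given currents recover, one triangle at a time, the chord joining the two points where $\phi$ enters and leaves that triangle, and that these chords fit together into the required piecewise linear curve $\hat\phi$. The geometric input is as follows. Since $\phi$ is a smooth embedding of the compact manifold $S^1$, it has finite curvature and admits a tubular neighbourhood of some width $\epsilon_0>0$; equivalently, there is $\epsilon_0>0$ such that any two points of $\phi(S^1)$ at Euclidean distance $<\epsilon_0$ are joined by a single subarc of $\phi(S^1)$ of length, and total turning, $\mathcal{O}(\epsilon_0)$. Choose $\Delta x$ small compared with $\epsilon_0$, the curvature radius, and — after an arbitrarily small perturbation of the mesh, which does not affect the conclusion — the distances from $\phi(S^1)$ to the mesh vertices, so that $\phi$ is transverse to every edge and misses every vertex. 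Then for each triangle $T\in\mathcal{T}$ the set $\phi(S^1)\cap T$ is either empty or a single arc meeting $\partial T$ transversally in exactly two points lying on two \emph{distinct} edges of $T$: emptiness, connectedness and transversality follow from the tubular-neighbourhood and curvature bounds, while an arc of total turning $\mathcal{O}(\Delta x)$ cannot cross one line twice and, being in general position, cannot meet a vertex. Call $T$ \emph{active} in the non-empty case; the same bounds show that $\phi$ visits each active triangle exactly once, in the cyclic order in which it is traversed, so the active triangles form a single cycle in the dual graph whose successive shared edges are precisely the edges of $\mathcal{T}$ crossed by $\phi$.

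Next one reads off the chords and assembles them. The $1$-forms constant on each triangle are spanned by $\mathbf{1}_T\,\d x$ and $\mathbf{1}_T\,\d y$, and for an active $T$ with $\phi^{-1}(T)=[t_0,t_1]$ one has $[\phi](\mathbf{1}_T\,\d x)=\phi_1(t_1)-\phi_1(t_0)$ and $[\phi](\mathbf{1}_T\,\d y)=\phi_2(t_1)-\phi_2(t_0)$, while both currents vanish on inactive triangles. Hence the data identify the active triangles and, for each such $T$, the nonzero displacement vector $v_T:=\phi(t_1)-\phi(t_0)$. By the elementary observation recorded in \autoref{fig:2segments}, $v_T$ can be placed as an edge-to-edge chord of $T$ in at most two positions, and two such positions are incident to different pairs of edges (a single position if $v_T$ is parallel to an edge). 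Since $\phi$ enters and leaves $T$ through the two edges $T$ shares with its two active neighbours in the cycle, these two ``active edges'' of $T$ are exactly the pair carrying the endpoints of $v_T$, so they single out the unique candidate chord, namely the segment $[\phi(t_0),\phi(t_1)]$; declare this segment to be $\hat\phi\cap T$. Because the exit point of $\phi$ from $T$ through a crossed edge equals the entry point into the neighbouring triangle, adjacent segments share an endpoint, so these segments close up into a continuous, piecewise linear closed curve $\hat\phi$ that interpolates $\phi$ at every mesh edge $\phi$ crosses (see \autoref{fig:pc}).

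Finally, accuracy: on each active $T$, $\hat\phi\cap T$ is the chord of the smooth arc $\phi(S^1)\cap T$, whose arclength is $\mathcal{O}(\Delta x)$, so the standard chord--arc estimate bounds the distance from any point of the arc to the chord by $\mathcal{O}(\Delta x^2)$, with constant controlled by $\sup|\kappa|$; maximising over the finitely many active triangles gives pointwise $\mathcal{O}(\Delta x^2)$ accuracy. The part requiring real care is not this estimate but the configuration statements above: quantifying how small $\Delta x$ must be — in terms of $\epsilon_0$, the curvature, and the local mesh geometry — so that every triangle is crossed at most once and always through two distinct edges, and, crucially, so that an active triangle has exactly the two active neighbours coming from its crossing arc, with no spurious active neighbour produced by a far-away return of $\phi$ to a nearby triangle; this last point is precisely where the tubular-neighbourhood width $\epsilon_0$ enters. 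The general-position hypothesis on the mesh (transversality, avoidance of vertices) is likewise needed, and is harmless because it holds after an arbitrarily small perturbation.
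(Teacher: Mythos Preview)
Your proof is correct and follows essentially the same route as the paper's: identify the active triangles from the nonzero currents, observe they form a discrete topological circle, use the at-most-two-placements observation of \autoref{fig:2segments} together with the cycle adjacency to select the unique chord in each triangle, and conclude $\mathcal{O}((\Delta x)^2)$ accuracy from chord--arc interpolation. The only difference is one of detail: you spell out the tubular-neighbourhood and general-position arguments (single arc per triangle, two distinct edges, no spurious active neighbour) that the paper simply asserts under the hypotheses ``sufficiently fine'' and ``in general position''.
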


\begin{proof}
The currents are 
$$\left\{\left(\int_{\phi(S^1)\cap T}\, \d x, \int_{\phi(S^1)\cap T}\, \d y\right)\colon T\in\mathcal{T}\right\}.$$
If the mesh is sufficiently fine, then these currents are either zero (if the curve does not intersect $T$), or they record the jumps in $x$ and $y$ of that part of the curve that lies in $T$. 
The elements on which the currents are nonzero therefore determine the set of elements whose interiors  
intersect the curve $\phi$. If the mesh is sufficiently fine and the curve is in general position, then these
elements form a `discrete topological circle', a set of triangles each sharing an edge with exactly two others
(see Figure \ref{fig:pc}).

We now consider finding a continuous shape $\hat\phi$ with the same currents as $\phi$. The
currents have two degrees of freedom per triangle, as do shapes that are linear on each
triangle; we therefor seek such an approximant $\hat\phi$. 
For any values of the currents of $\phi$ on piecewise constants, i.e., for any values of the jumps in $x$ and $y$, there are at most two line segments in $T$ with endpoints on the edges of $T$ whose currents take on these values. (See Figure \ref{fig:2segments}.) If there are two such line segments, then they join different pairs of edges. The line segment that joins two edges that are part of the known discrete topological circle can then be chosen. See Figure \ref{fig:pc}.

This piecewise linear approximation $\hat\phi$ to $\phi$ interpolates $\phi$ at the edges of the elements, and, as $\phi$ is assumed to be smooth, obeys
$$ \max_{s\in{S^1}}\, \min_{t\in S^1} \|\phi(t)-\hat\phi(s)\| = \mathcal{O}((\Delta x)^2)$$
on each triangle $T$. It therefore determines the shape to second order accuracy.
\qed
\end{proof}

Next we consider the improvement that can be obtained using
discontinuous piecewise linear or quadratic elements, i.e., $d=1$, 2.

\begin{proposition}
\label{prop:approx}
Let $\mathcal{T}$ be a triangular planar mesh. Let 
\begin{equation*}
\begin{aligned}
V_x &:=\{\d x,\d y,y \, \d x, x y \, \d x, y^2\, \d x\},\\
V_y&:=\{\d x, \d y, x\, \d y, x^2\, \d y, x y\, \d y\}.
\end{aligned}
\end{equation*}
 Then for  sufficiently
smooth $\phi\colon S^1\to \R^2$  in general position and for $\mathcal{T}$ sufficiently fine,
 the integrals of the first $k$
1-forms in $V_x$ and $V_y$, $2\le k\le 5$, over $\phi(S^1)\cap T$ for each $T\in\mathcal{T}$ 
determine an $\mathcal{O}((\Delta x)^k)$-accurate approximation of $\phi(S^1)$. 
\end{proposition}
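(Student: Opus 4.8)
The plan is to extend the reconstruction of \autoref{prop:constants}, replacing line segments by polynomial graphs of degree $k-1$ on the triangles that the curve meets. First I would rerun the argument of \autoref{prop:constants} using only the currents of $\d x$ and $\d y$: for $\Delta x$ small and $\phi$ in general position (transverse to every mesh edge, with the finitely many horizontal and vertical tangencies of $\phi(S^1)$ off the mesh skeleton) these identify the active triangles, organize them into a discrete topological circle, and hence record, for each active $T$, which pair of its edges the arc $\gamma_T:=\phi(S^1)\cap T$ joins; since no two edges of a triangle are parallel, the displacement $\big(\int_{\gamma_T}\d x,\int_{\gamma_T}\d y\big)$ together with the two edge equations then pins down the endpoints $p_0,p_1$ of $\gamma_T$ by a $2\times2$ linear solve. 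By general position and fineness $\gamma_T$ is a smooth graph over one of the coordinate axes with derivatives bounded in terms of $\phi$ alone; comparing $|\int_{\gamma_T}\d x|$ with $|\int_{\gamma_T}\d y|$ selects the axis, and hence whether to use the first $k$ forms of $V_x$ (case $y=f(x)$, $x\in[x_0,x_1]$) or of $V_y$. I treat the $V_x$ case; the $V_y$ case is symmetric, and on a graph over $x$ the $V_y$ currents carry nothing new (each equals a combination of the $V_x$ data and the endpoint coordinates, e.g.\ $\int_{\gamma_T}x\,\d y=[xy]_{p_0}^{p_1}-\int_{\gamma_T}y\,\d x$).

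On such a $T$ the first $k$ currents of $V_x$ amount to the endpoint values $f(x_0),f(x_1)$ (recovered above from $\d x,\d y$) together with the moments $m_0=\int_{x_0}^{x_1}f$ (when $k\ge3$), $m_1=\int_{x_0}^{x_1}xf$ (when $k\ge4$), and $q=\int_{x_0}^{x_1}f^2$ (when $k=5$). For $k=2,3,4$ I would take $\hat f$ to be the unique degree-$(k-1)$ polynomial matching the prescribed endpoint values and prescribed \emph{linear} moments: after rescaling $[x_0,x_1]$ to $[0,1]$ this is a fixed Hermite-plus-moment interpolation problem, unisolvent by a short determinant computation, with no residual ambiguity once the endpoints are fixed; the standard interpolation estimate gives $\|f-\hat f\|_{\infty,[x_0,x_1]}=\mathcal{O}((\Delta x)^k)$, because the degree-$(k-1)$ Taylor polynomial of $f$ matches the same functionals to that order and the rescaled interpolation operator is bounded.

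The case $k=5$ is the crux, since the fifth datum $q=\int\hat f^2$ depends quadratically on $\hat f$. Here I would first solve the four linear conditions, leaving a one-parameter family $\hat f_t=\tilde p+t\,\psi$ ($\psi$ spanning the kernel), then choose $t$ with $\int\hat f_t^2=q$ --- a scalar quadratic equation --- selecting the branch consistent with the already-computed $k=4$ reconstruction (the two branches being $\mathcal{O}((\Delta x)^2)$ apart). The main obstacle, and the heart of the proposition, is the error bound: showing the selected $\hat f$ attains $\|f-\hat f\|_\infty=\mathcal{O}((\Delta x)^5)$ is not automatic --- a crude count suggests that $q$ only determines the arc's curvature to leading order, which by itself would give only $\mathcal{O}((\Delta x)^3)$ --- so one needs a carefully rescaled analysis of the nonlinear system, controlling the (weak) coupling of $\hat f\mapsto\int\hat f^2$ to the bump direction $\psi$ and exploiting the high-order agreement of $\int\tilde p^2$ with $q$, and possibly a more rigid reconstruction, e.g.\ additionally enforcing continuity of the tangent of $\hat\phi$ across element edges so that the per-triangle fits become a single loop-tridiagonal linear system. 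This is where ``$\phi$ sufficiently smooth'' and ``in general position'' do the real work, in contrast to the purely linear bookkeeping of the cases $k\le4$.

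Finally, each reconstructed arc passes through the same endpoints $p_0,p_1\in\partial T$ as $\gamma_T$, so the pieces glue into a continuous closed curve $\hat\phi$; assembling the per-triangle sup-norm bounds yields $\max_{s}\min_{t}\|\phi(t)-\hat\phi(s)\|=\mathcal{O}((\Delta x)^k)$ (and symmetrically), so the listed currents determine $\phi(S^1)$ to order $(\Delta x)^k$, as claimed.
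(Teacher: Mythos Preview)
Your approach for $k\le 4$ matches the paper's: reduce to a graph $y=g(x)$ on each active triangle and take the unique degree-$(k-1)$ polynomial matching the endpoint values together with the available linear moments. The paper writes down explicit leading error terms (e.g.\ $\frac{h^3}{12}s(1-s)(2s-1)g'''(0)$ for $k=3$, and $\frac{h^4}{120}s(s-1)(5s^2-5s+1)g''''(0)$ for $k=4$) rather than invoking an abstract interpolation estimate, but the content is the same.

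For $k=5$ you set up the right problem --- a quartic approximant satisfying the four linear conditions plus the quadratic condition $\int\hat f^2=q$, leaving a scalar quadratic for the free parameter --- but then stop short, worrying that $q$ might only pin down curvature and yield $\mathcal{O}((\Delta x)^3)$, and floating tangent-continuity across element edges as a possible rescue. That concern is unfounded and the extra machinery unnecessary. The paper simply carries out the (symbolic) computation: \emph{both} real roots of the quadratic give $\mathcal{O}(h^5)$ approximants, with leading errors bounded by $0.00003\,h^5|g^{(5)}(0)|$ and $0.003\,h^5|g^{(5)}(0)|$ respectively. So no delicate branch selection is needed for the stated order, and your claim that the two branches sit $\mathcal{O}((\Delta x)^2)$ apart is off --- they are $\mathcal{O}((\Delta x)^5)$ apart. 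The ``carefully rescaled analysis'' you allude to is nothing more than this direct computation; carry it out and the case closes without any global coupling of the per-triangle fits.
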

\begin{proof}
From Proposition~\ref{prop:constants}, the piecewise constant currents determine the occupied
triangles.
Suppose that the shape can be written on an occupied triangle $T$ in the form $y =g(x)$. If the derivatives
of $g$ are bounded we  use the 1-forms in $V_x$ only. Otherwise, the curve can be written in the form $x=f(y)$ where $f$ has bounded derivatives and we use the 1-forms in $V_y$ only. Without loss
of generality, we consider the first case.

The case $k=2$ is covered in Proposition~\ref{prop:constants}. 

For $k=3$, we know the intersection points of the curve with the edges of each occupied triangle and,
in addition, the value of the current $\int_U y\, \d x$ for each $U=\phi(S^1)\cap T$, $T\in\mathcal{T}$.
We take coordinates in which the shape is $y=g(x)$ 
and the $x$-intersections are at $x=0$ and $x=h$, where $h = \mathcal{O}(\Delta x)$; this
fixes the parameterization. We consider the
quadratic approximation $y=\hat g(x)$ to $g(x)$ that interpolates $g(x)$ at $x=0$ and at $x=h$ 
and has the same value of $\int_0^h y\, \d x$. This yields the approximation
$$ \hat g(x) = g(0) \left(\frac{h-x}{h}\right) + g(h)\frac{x}{h} + a_0 x(h-x)$$
where
$$ a_0 = \frac{6}{h^3}\left(\int_0^h g(x)\, \d x - \frac{g(0)+g(h)}{2}\right) .$$
At $x=s h$, $0\le s\le 1$, the approximation error is
$$ g(s h) - \hat g(s h) = \frac{h^3}{12}s(1-s)(2s-1)g'''(0) + \mathcal{O}(h^4).$$
That is, piecewise linear elements determine the shape to third order accuracy.

For $k=4$, we know, in addition, the current $\int_U  x y \, \d x$, and we choose  
a cubic approximation. There is a unique such cubic, and it yields a 4th order approximation to  $g(s h)$ with leading
order error
$$ \frac{h^4}{120} s(s-1)(5 s^2 - 5 s + 1)g''''(0).$$

For $k=5$, we know, in addition, the current $\int_U  y^2 \, \d x$, and we 
 seek a degree 4 approximant. The equations for the coefficients of the approximant 
are now nonlinear, of total degree 2. They have two real solutions. One has leading order error
less than $0.00003 h^5 |g^{(5)}(0)|$, and the other has leading order error
less than $0.003 h^5 |g^{(5)}(0)|$. This establishes 5th order accuracy for $k=5$.
\qed
\end{proof}

Note that in practice, the integrals of both $V_x$ and $V_y$ would be used, but on most triangles they do not provide independent information.

We briefly discuss several things we learn from this proposition:
\begin{enumerate}
\item The currents determine smooth shapes very accurately on sufficiently fine meshes. The errors (less than $0.008 h^3 |g^{(3)}|$ for $k=3$, $0.0006 h^4 |g^{(4)}|$ for $k=4$, and $0.00003 h^5 |g^{(5)}|$ for $k=5$), are less than twice that of Chebyshev interpolation.
\item If the finite element space $W$ has $k$ degrees of freedom per triangle (here $k=\mathcal{O}(d^2)$), the
order of approximation appears to be $\mathcal{O}(k)$. That is, the higher dimensionality
of the target manifold $N$ does not seem to be important.
\item The inherent nonlinearity of the approximation need not be an obstacle.
Despite the nonlinearity of the approximation, leading to quadratic equations when $k=5$,
the best approximant can be chosen systematically. Nevertheless, we anticipate that at very high degrees $d$ the nonlinearity may render it difficult to reconstruct an accurate approximation.
\item
The situation here is an example of a \emph{moment problem}. If we choose coordinates
in which the base of the triangle $T$ is at $y=0$, and consider the domain:
$$E_i = \{(x,y)\colon 0\le x\le h, 0\le y \le g(x)\},$$
then we are being given the moments $\iint_{E_i} x^m y^n\, \d x\d y$ (for some set of values of $m,n$)
and are asking how accurately we can reconstruct $g(x)$. When $n>1$ we have a
 nonlinear approximation problem about which, as far as we know, little is known.
\item
The moment problem considered in this section for discontinuous piecewise polynomial finite elements is nonlinear, but at least it is entirely local. We anticipate that analogous results to
Proposition \ref{prop:approx} hold for the approximation of shapes by the currents of continuous piecewise polynomial finite elements.
\end{enumerate}

\subsection{Discretization of the metric on shapes}
\label{sec:metric}
Recall that the norm of the shape metric is defined by Eqs. (\ref{eq:riesz},\ref{eq:norm}):
$$ \|[\phi]\|_{H^{-s}} := \|\beta\|_{H^s},\quad
\hbox{\textrm where\ } [\phi](\alpha)=(\beta,\alpha)_{H^s}\ \forall \alpha,\ \beta\in H^s\Lambda^1(\Omega).$$
 If $W\subset H^s\Lambda^1(\Omega)$ then both $[\phi]$ and the $H^s$ inner product
 may be restricted to $W$ to yield a finite element approximation of the representer $\beta$
 and a metric on $W^*$. That is:
 $$ \|[\phi]\|_{W^*} := \|\beta\|_{H^s},\quad
 \hbox{\textrm where\ } [\phi](\alpha) = (\beta,\alpha)_{H^s}\ \forall  \alpha,\ \beta\in W.$$
In coordinates, let $w_1,\dots,w_k$ be a basis of $W$ and let ${\mathbf b}$, ${\mathbf f}$ be 
the coordinate vectors of $\beta$ and $[\phi]$, respectively; that is, $f_i = [\phi](w_i)$ and
$\beta = \sum_{i=1}^k b_i w_i$. Then
\begin{equation}
\label{eq:Gbf}
 G{\mathbf b} = {\mathbf f},
 \end{equation}
where 
\begin{equation}
\label{eq:G}
G_{ij} = (w_i,w_j)
\end{equation}
is the matrix of the $H^s$-metric restricted to $W$. For $s=1$, $G$ is a linear combination of the mass and stiffness matrices of $W$.
Note that (\ref{eq:Gbf}), (\ref{eq:G}) amount to a standard finite element solution of the inhomogeneous
Helmholtz equation (\ref{eq:hh}).
Then we have
$$\|[\phi]\|_{W^*} = \sqrt{{\mathbf b}^T G {\mathbf b}}\,.$$
In practice, we use the Cholesky decomposition of $G$ to represent $\beta$ 
in an orthonormal basis, i.e., we compute $\tilde{\mathbf b} = G^{\frac{1}{2}}{\mathbf b}$ so that
 $$\|[\phi]\|_{W^*} = \|\tilde{\mathbf b}\|_2.$$
In this way each shape maps to a point in a standard Euclidean space $\R^{|W|}$
and standard techniques such as Principal Components Analysis can be applied.
We think of $\|\cdot\|_{W^*}$ as providing a highly compressed or approximate
geometric representation of shape space.

Although the choice $s=0$ does not make sense at the continuous level---the `representer'
would be a delta function supported on the shape, which is not in $L^2$---it does
make sense at the discrete level. An example is provided by the piecewise
constant finite elements considered in Section \ref{sec:approx}. The representer
is nonzero on the triangles that intersect the shape; it is similar to a discrete greyscale drawing
of the shape. As the currents of piecewise constant finite elements know the intersection of the shape with the edges of the mesh, they
 already provide a very sensitive discretization of the metric on shapes. 
However, it does not converge as $\Delta x\to 0$. Moreover, in this approximate metric, 
all shapes that intersect with different edges are seen as equally far away. For example,
the computed distance between two straight segments located at $x=0$ and $x=d$
is proportional to 
\begin{equation*}
\begin{cases}
d, & d < \Delta x,\\
\Delta x, & d \ge \Delta x.
\end{cases}
\end{equation*}
For these reasons we do not consider discontinuous elements further.

An example of an $H^1$ representer is shown in Figure \ref{fig:rep}, calculated
using a single square element together with polynomial currents of degree less than 10. The Helmholtz equation
(\ref{eq:hh}) smears out the shape over a length scale $\scale$ ($1/\sqrt{10}$ in this example). This
allows shapes to be sensitive to each other's positions over lengths of order $\scale$, which is 
typically many times the mesh spacing.

\begin{figure}
\centering
\includegraphics[width=7cm]{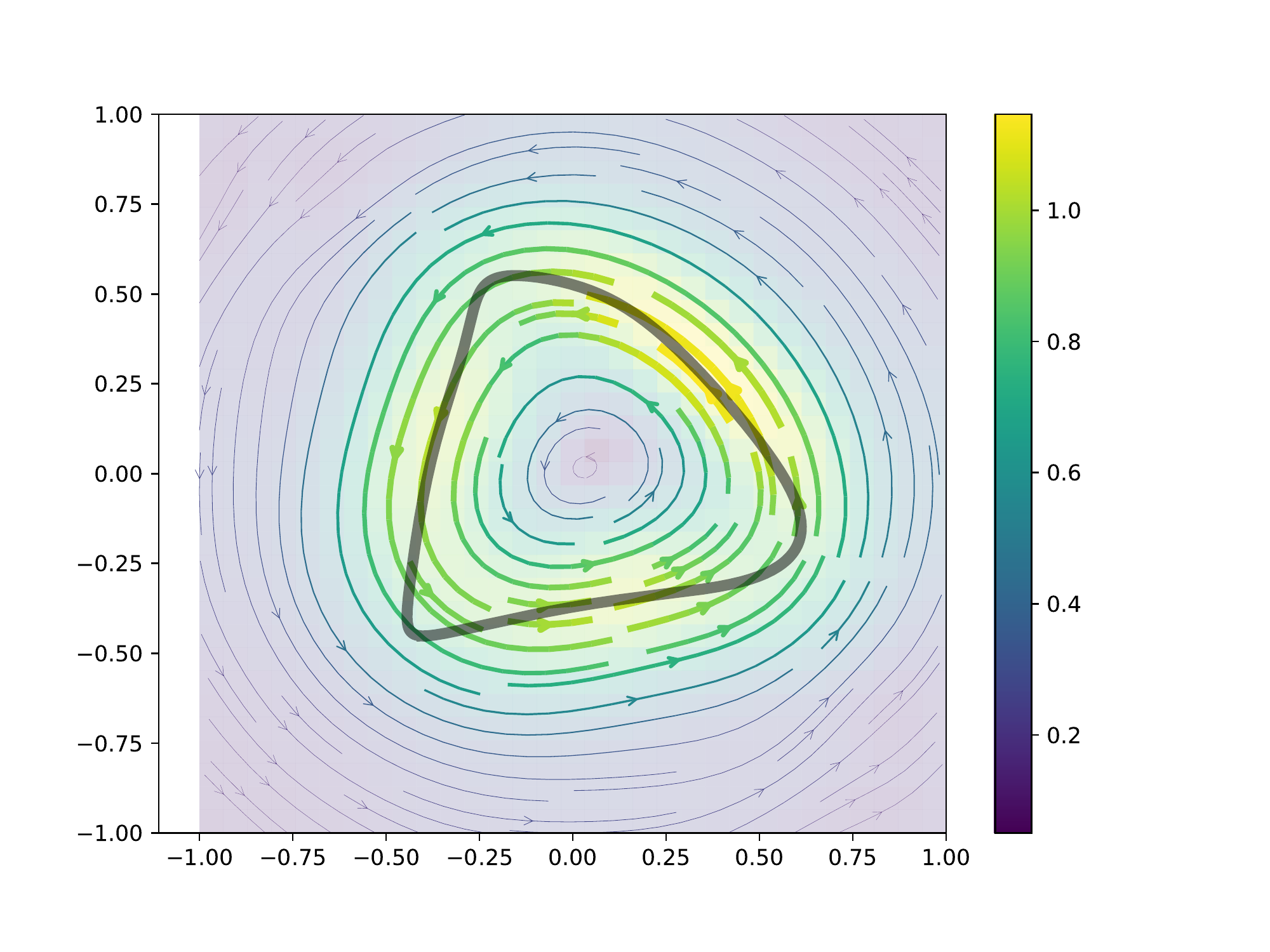}
\caption{\label{fig:rep}
The representer (blue vector field) of a randomly chosen shape (red) in $[-1,1]^2$ with respect to 
the $H^1$ metric with length scale $\scale=1/\sqrt{10}$. Here $W$ is
the space of polynomials on $[-1,1]^2$ of degree less than 10, times $\d x$ or $\d y$. 
The colour represents the size of the representer vector.
}
\end{figure}

For any $s$, one possible choice of $W$ is to take polynomials up to some degree $N$.
This gives a spectral method. However, we do not expect spectral accuracy
because the representers are not smooth, they are only in $H^s$. 
As in finite element solutions of PDEs, the mesh size and order $p$ of the elements
can be adjusted depending on the application. But unlike PDEs, in most
shape applications it is not necessary to have small errors with respect to the continuous problem. Rather, 
each choice of $W$ 
provides a different approximation or description of shape space. In some applications
$W$ may be relatively low-dimensional.

For $s=1$, we can take any triangulation of $\Omega$ with $W$ the continuous piecewise polynomials of degree $d\ge 1$; these lie in $H^1$ so the above construction applies directly.
For $s>1$ we use the same elements,
and form the same metric $G$ associated with $s=1$, but determine ${\mathbf{b}}$ by solving
\begin{equation}
\label{eq:Gs}
G^s{\mathbf{b}} = {\mathbf{f}}.
\end{equation}
Such representers satisfy higher-order Helmholtz equations
with slightly different boundary conditions than those implied by Eqs.\ (\ref{eq:riesz}) and (\ref{eq:norm}).
In practice, this difference is immaterial
as long as the curve is located away from the boundaries,
since the representer tends to zero far from the curve.
Moreover, this approximation error is outweighed by the  ease of solving (\ref{eq:Gs}) and of using standard finite elements.

\section{Examples}
\label{sec:examples}

We have implemented this approach to currents using finite elements in Python using FEniCS and Dolfin~\cite{Logg12,Alnaes15}. 
The implementation is available at \cite{femshape}.
We define a rectangular mesh with continuous Galerkin elements of predefined order. To compute the invariants, we use the \texttt{tree.compute\_entity\_collisions()} function to identify which mesh cells intersect with the curve. We then evaluate the basis functions of those cells and update the computation of the currents as the sum of the basis elements of intersecting mesh elements weighted by the size of the cell. Solving for the representer in the relevant norm is based on a matrix solve $\mathbf{G} b_x = f_x$, where $\mathbf{G}$ is the tensor representation of the norm (see equation~(\ref{eq:G})), $b_x$ is a function on the finite element space, and $f_x$ is the vector of $x$ currents (i.e.,
an integral of $w_i(x)\, \d x$ over the curve), and similarly for $b_y$ and $f_y$. We used a length scale of $\scale=1/\sqrt{10}$ throughout, so the $H^{-1}$ norm that we used was computed as \texttt{m = 1./10*inner(grad(u),grad(v))*\d x () + u*v*\d x ()}, where \texttt{u} and \texttt{v} were trial and test functions on the function space, respectively. In order to compute the $H^{-2}$ norm it is necessary to perform a second solve, based on the matrix built from \texttt{x*v*\d x ()}.

We present a series of experiments demonstrating the use of our implementation on a variety of test cases. The first few examples are chosen to demonstrate the robustness of the approach. They demonstrate the reparameterization of the shape, its numerical stability of the method in both norms, and robustness with curves that lose differentiability, for example at corners. These examples are followed by a set of experiments demonstrating that the representation of the curves that are produced is consistent with human perceptions of the shapes.

\begin{example}
\label{ex:reparam}
  In the first experiment we next demonstrate that the representation in the $H^{-1}$ and $H^{-2}$ norms is invariant to reparameterization of the curves. The curve is a bowtie shape discretized with 512 points. The equally-spaced positions of these points were then perturbed by a Gaussian random variable of standard deviation 0.1 (recall that the radius of the circle was 0.5) and the points resorted into monotonically increasing order of arclength. \autoref{fig:reparam} shows the representers for the unperturbed shapes on the left, and the perturbed ones on the right using piecewise linear finite elements and a meshsize of $10 \times 10$. Visually, there seems to be no difference between them. The difference in the computed norms between the original shape and the reparameterized one was of the order of $10^{-4}$ for both metrics.

\begin{figure}
\centering
\includegraphics[width=\textwidth]{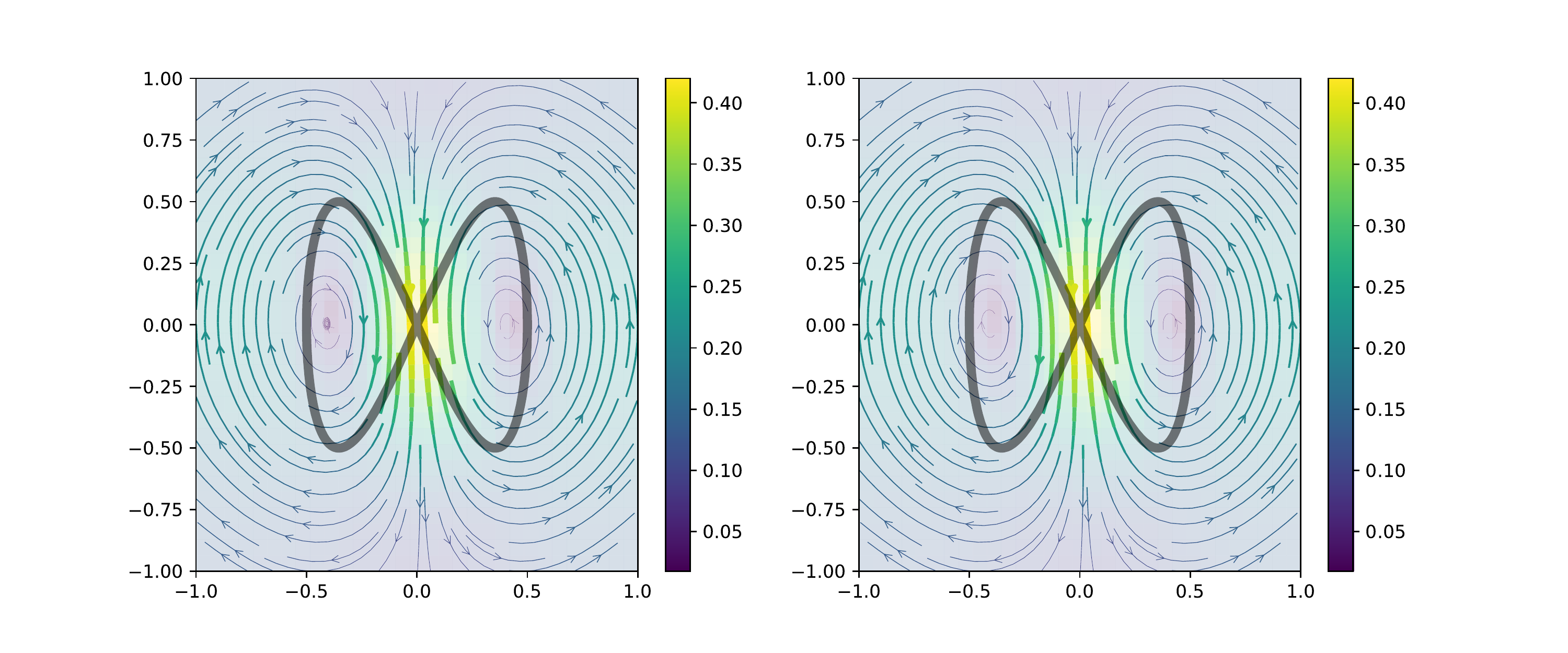}
\caption{\label{fig:reparam}
\autoref{ex:reparam}:
The representers for a bowtie shape with equally spaced points (\emph{left}) and under a random reparameterization (\emph{right}). Visually there seems to be no difference between the representers, and the numerical differences are around $10^{-4}$ with both metrics.
}
\end{figure}
\end{example}

\begin{example}
\label{ex:errorH1H2}
We examine the convergence of the discretized metric with respect to the
meshsize and the order of the finite elements. A circle of radius 0.5 in the domain $\Omega = [-1,1]^2$ discretized with 5000 equally spaced points, so that quadrature errors are negligible. The grid is a uniform triangular mesh on an $M\times M$ grid. 
Using elements of order 1 (piecewise linear) up to order 4, we computed the difference in each of the $H^1$ and $H^2$ norms as the meshsize $M$ was increased in powers of 2 from 1 up to 128, i.e., we compute $\|[\phi]\|_{H^{-s},2M} - \|[\phi]\|_{H^{-s},M}$. The results are shown in \autoref{fig:errorH1H2}.  Reference lines illustrating convergence of order $1$ for $H^{-1}$ and order 2.5 for $H^{-2}$ are also shown.
The improved convergence of the $H^{-2}$ metric is clear; the benefit of increasing the order of the finite elements themselves is less clear. 

\begin{figure}
\centering
\includegraphics[width=.45\textwidth]{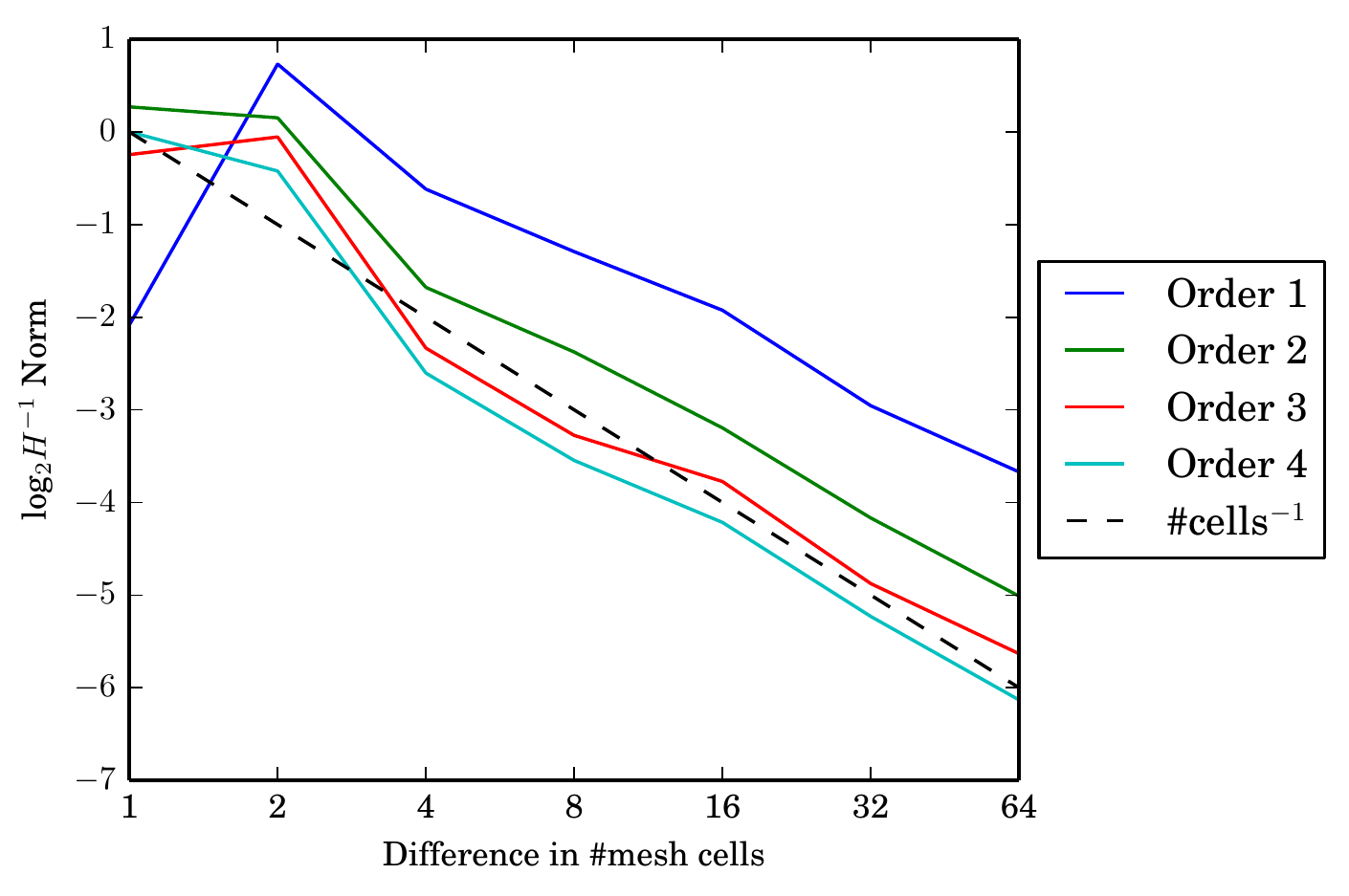}
\includegraphics[width=.45\textwidth]{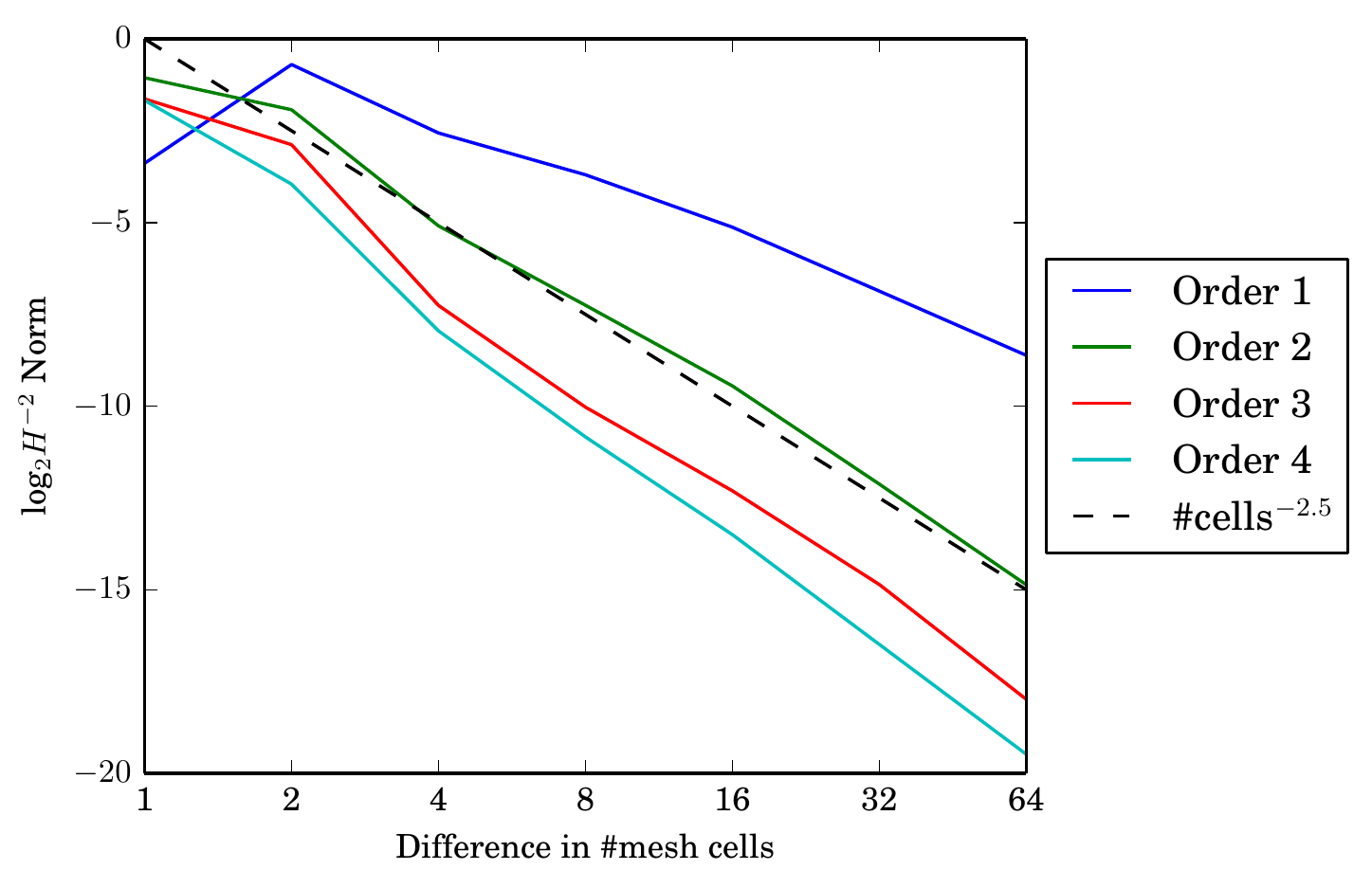}
\caption{\label{fig:errorH1H2}
\autoref{ex:errorH1H2}:
Convergence of the discrete metric as a function of the meshsize and element order.
The log difference between successive approximations of the $H^{-1}$ norm (\emph{left}) and $H^{-2}$ norm (\emph{right}) is shown as the meshsize increases in powers of 2 for four different orders of finite elements. Reference lines are provided in each plot to illustrate the  order of convergence. }
\end{figure}
\end{example}

\begin{example}
\label{ex:wiggly}
In this example we study the sensitivity of the norms to small wiggly perturbations. 
The base shape is the same circle of radius 0.5, and the domain remains $\Omega = [-1,1]^2$. The circle
is perturbed by scaling its radius by a factor $1+\epsilon \cos(\omega \theta)$; see the illustration in \autoref{fig:wiggly}.
We take 5000 points on the shapes and compute the distances using piecewise linear finite elements 
and apply Richardson extrapolation to the results
from meshsizes $M=80$, $M=160$, and $M=320$ to ensure that finite element discretization errors
are negligible. The results for the two norms are shown in the table in \autoref{fig:wiggly}. As expected 
from the analysis of nearby straight sections in Section \ref{sec:metric}, for 
fixed frequency $\omega$ the distances are $\O(\epsilon^{1/2})$ for the $H^{-1}$ metric
and $\O(\epsilon)$ for the $H^{-2}$ metric. However, it appears that overall the distances
are independent of frequency for $H^{-1}$ and $\O(\omega^{-1/2})$ as $\omega\to\infty$ for
$H^{-2}$. Overall, the $H^{-2}$ metric is much less sensitive to small wiggles, 
especially small high-frequency wiggles, than the $H^{-1}$ metric.
\begin{figure}

\centering
\includegraphics[height=6cm]{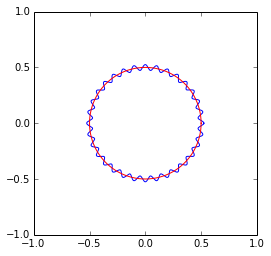}
\hspace{2mm}
\includegraphics[height=6cm]{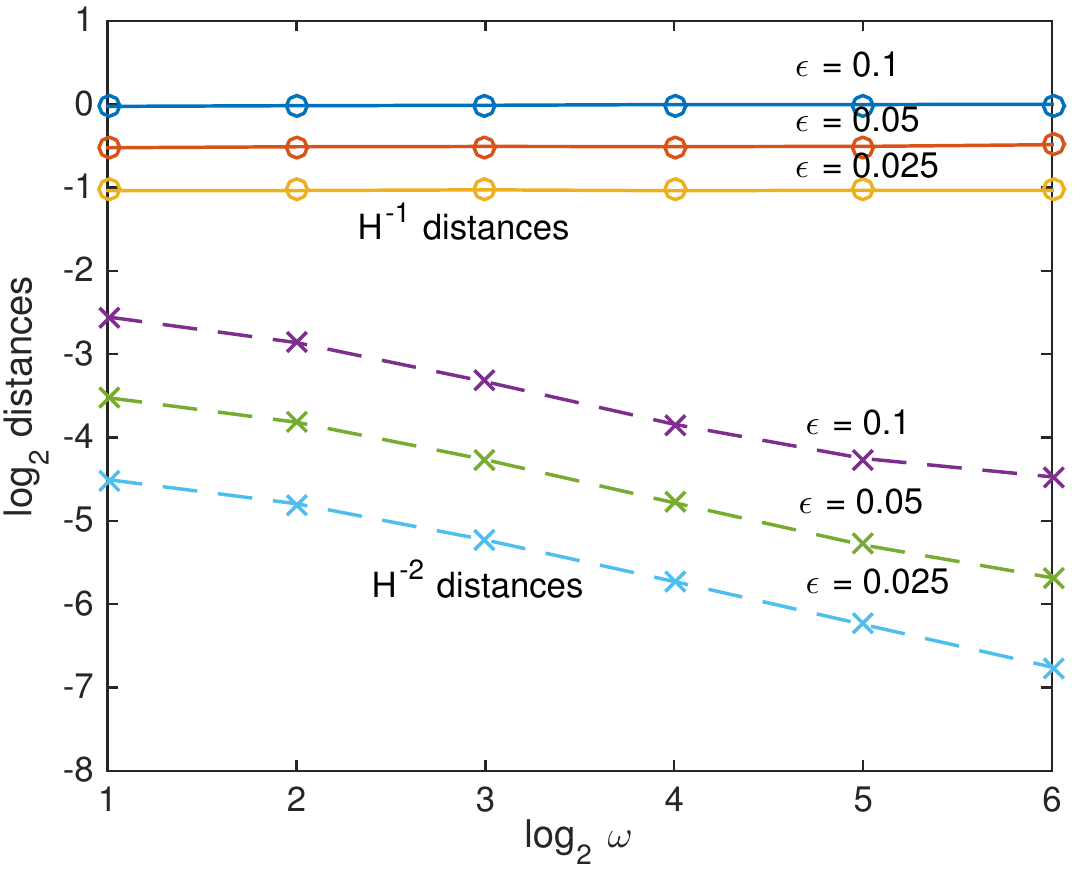}\\[2mm]
\begin{tabular}{| r | rrr || rrr |}
\hline
$\omega$ & \multicolumn{3}{|c||}{$H^{-1}$} & \multicolumn{3}{|c|}{$H^{-2}$} \\
\hline
 & $\epsilon=0.1$ & $\epsilon=0.05$ &$\epsilon=0.025$ &$\epsilon=0.1$ & $\epsilon=0.05$ &$\epsilon=0.025$ \\
 \hline
2   &0.9817  &  0.6959 &  0.4868 &   0.1704 &   0.0871 &   0.0440 \\
4  & 0.9876  &  0.7017  &  0.4875 &  0.1376  &  0.0709 &   0.0360\\
8  & 0.9905   & 0.7034   & 0.4903  &  0.0994  &  0.0520 &   0.0267\\
16 &  0.9969  &  0.7027  &  0.4868 &  0.0698  &  0.0363&    0.0189\\
 32 & 0.9967   & 0.7037   & 0.4886  & 0.0525  &  0.0256&    0.0132\\
 64 & 0.9991  &  0.7140   & 0.4881  & 0.0450  &  0.0195&    0.0093\\
\hline
\end{tabular}

\caption{\label{fig:wiggly}
  \autoref{ex:wiggly}:
  The reference shape (red circle) and perturbation (wiggly blue curve) shown
for amplitude $\epsilon=0.05$ and frequency $\omega=32$; the computed
distances between the reference and perturbed shapes is shown at right, and tabulated below,
in the $H^{-1}$ and $H^{-2}$ norms.
The norm perturbation due to the wiggles appears to be $\mathcal{O}(\epsilon^{-1/2})$ for
the $H^{-1}$ norm and $\mathcal{O}(\epsilon\omega^{-1/2})$ in the $H^{-2}$ norm. 
}
\end{figure}
\end{example}

\begin{example}
  \label{ex:rounding}
  In this experiment we consider a family of shapes, the supercircles $x^r + y^r = (1/2)^r$. shapes, together with their representers, are shown in \autoref{fig:rounding} with 512 points on the curves,  meshsize $M=80$ and piecewise linear elements, and $r=2^1,2^{1.5},2^2,2^{2.5}$.
  Note that although the different between the \emph{curves} is apparent, it is much harder to see a difference between the \emph{representers}.
Nevertheless, the norms do see a difference, with the $H^{-1}$ norm decreasing from 4.67 to 3.61  over the four curves shown 
in the Figure. The $H^{-2}$ norm decreases from 1.86 to 1.26. 

\begin{figure}
\centering
\includegraphics[width=\textwidth]{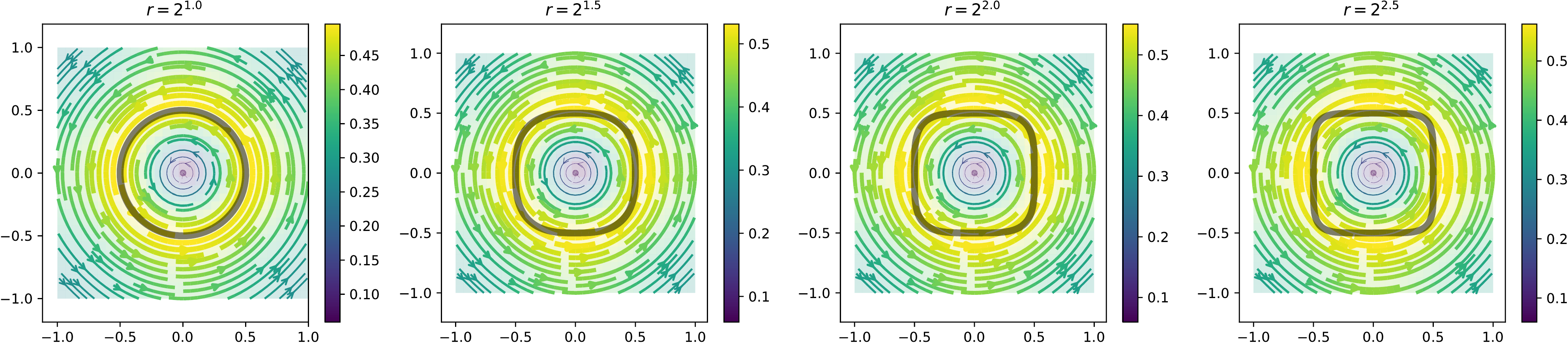}
\caption{\label{fig:rounding}
\autoref{ex:rounding}:
The representers for a set of shapes showing a circle deforming to get progressively more `square-like'.
With the chosen scale $\scale = 1/\sqrt{10}$ on a $1 \times 1$ grid, the visual differences between the representers for the various shapes are very small.
}
\end{figure}

\end{example}



\begin{example}
  \label{ex:rounded}
  In this example we study the $H^{-1}$ geometry of the family of supercircles. We take the curves $x^r + y^r = (1/2)^r$ for $0<r<\infty$. This family ranges from a cross (equivalent to a null shape for currents), through an astroid, a circle, to a square. 
We compute the shapes for 13 exponents $r=2^j$, $j=-3, -2.5, \dots,3$. 
We use 512 points on each curve, a meshsize of $M=10$ and piecewise linear elements.
Thus $W$ is a normed vector space of dimension $2\cdot(M+1)^2=242$. It is
transformed to a standard Euclidean $\R^{242}$ using the Cholesky factorization
described in Section \ref{sec:metric}. The 12 data points in $\R^{242}$ are
projected to $\R^2$ using standard PCA.
 The results are shown in \autoref{fig:rounding2}. The bunching up of the points at either end is clear. The induced geometry of the family can be seen in the curvature of the
 family.

\begin{figure}
\centering
\includegraphics[width=.5\textwidth]{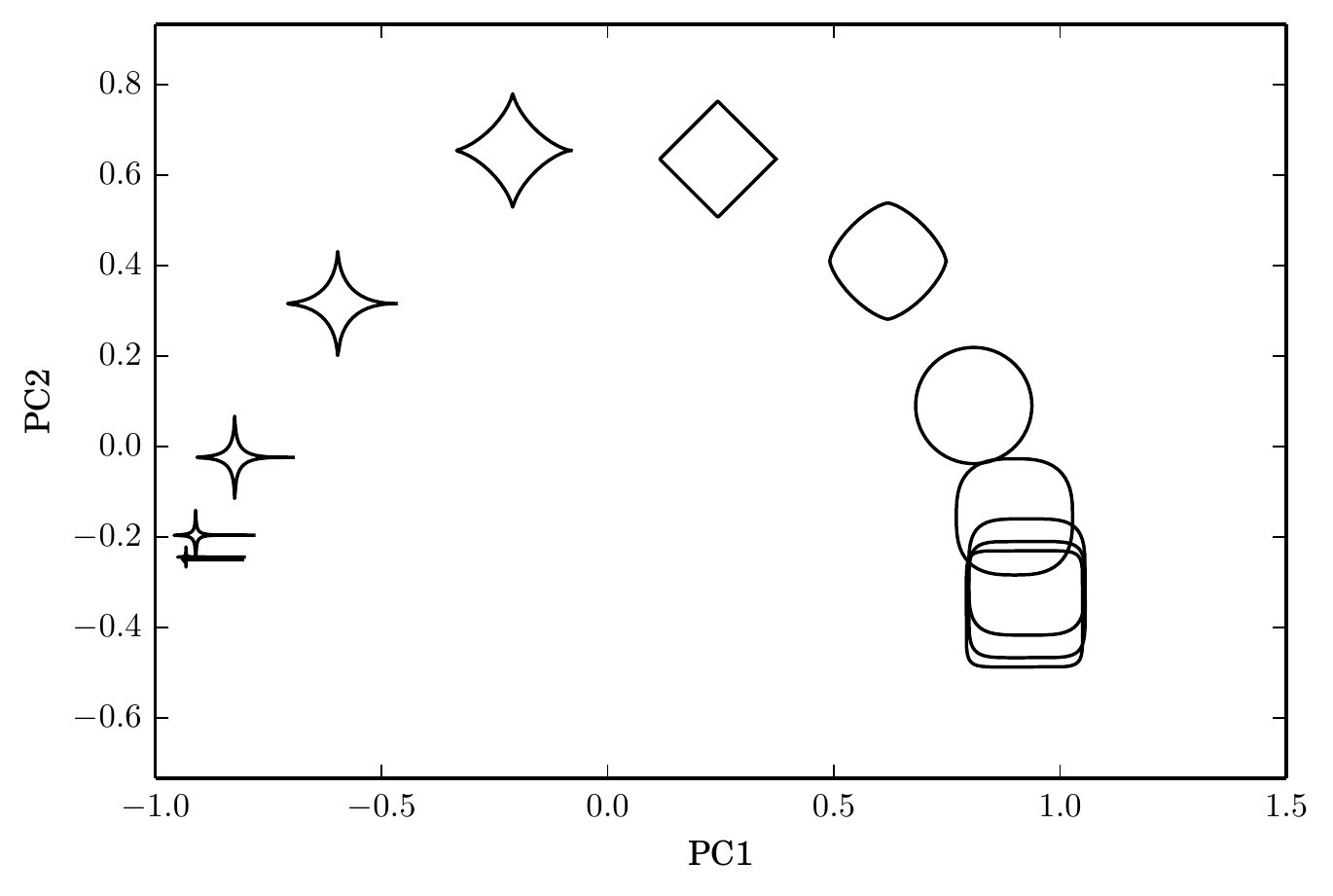}
\caption{\label{fig:rounding2}
\autoref{ex:rounded}:
2D embedding of a set of simple shapes that have a strong dependence on a single parameter by using the first two principal components.
The curve of shapes seems to have constant curvature, but we do not know why it is so.
}
\end{figure}
\end{example}

\begin{example}
  \label{ex:diffsig}
  In the previous example, the dataset was intrinsically 1-dimensional. We now consider a high-dimensional dataset. We generate 32 random smooth shapes using random Fourier coefficients and then compare them in the $H^{-1}$ metric using  currents $\int x^m y^n\, \d x\, \d y$ for $0\le m+n<10$. The Fourier coefficients $\tilde z_k$ of the shapes have $\tilde z_0=0$, $\tilde z_1=0.5$ $\tilde z_{-1}=0$ (so that they are all roughly centred and of the same size), with  $\tilde z_k$ for $2\le k\le 6$ being independent random normal variables with standard deviation $1/(1+|k|^3)$. Thus the dataset is 10-dimensional. We then perform an optimization step that computes the planar embedding of the shape currents whose Euclidean distance
matrix best approximates the $H^{-1}$ distance matrix, using the best planar subspace of $W$ as computed by PCA as the initial condition for the optimization.
The results are shown in \autoref{fig:randN10}.
The mean distance error of the embedding is 0.06. 
This gives a pictorial representation of a high-dimensional data set lying within
an infinite-dimensional nonlinear shape space. The triangular shapes appear on the 
edges, sorted by orientation, while the squarish shapes appear near the centre. 
Obviously similar shapes, such as 5 and 16, and 2 and 4, are placed very close
together.

\begin{figure}
\centering
\includegraphics[width=12cm]{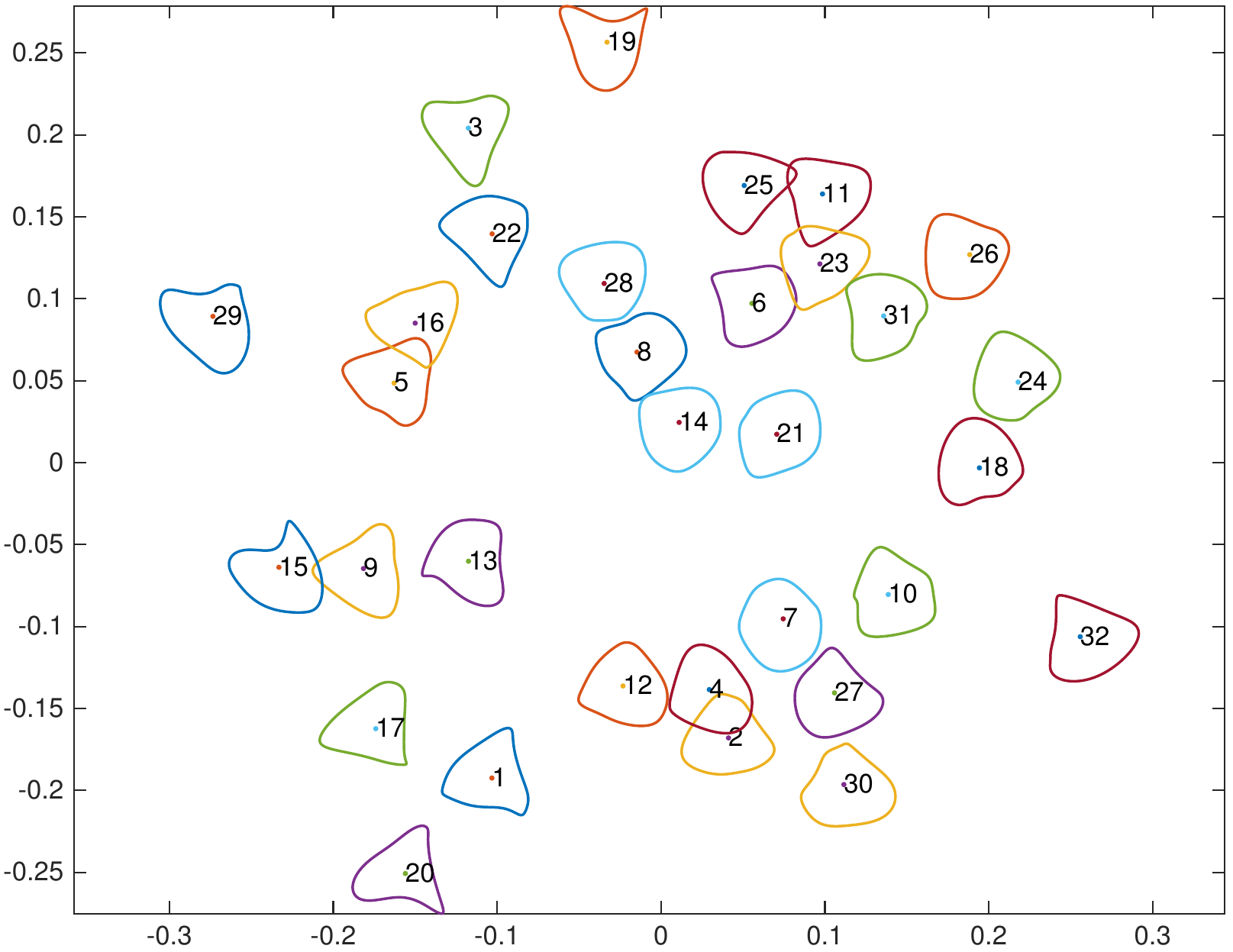}
\caption{\label{fig:randN10}
  \autoref{ex:diffsig}:
Here, 32 random shapes are created and compared using currents of order $N\le 10$. 
The planar embedding of the shape currents that best preserves their pairwise distances is shown.
One can see that shapes are grouped together according to their general shapes (triangular or square) and orientation.
As the underlying distance is robust with respect to noise, apparently different shapes such as 15, 9 and 13 are grouped together, but their shape (triangular) and orientation is roughly the same.
}
\end{figure}
\end{example}

\begin{example}
  \label{ex:2dembedeuc}
In order to compare our approach to the differential signature method, we took the same shapes as in \autoref{ex:diffsig} and compared them modulo the special Euclidean group of the plane by computing their Euclidean differential signatures $(\kappa,\kappa_s)$ (where $\kappa$ is the Euclidean curvature and $s$ is arclength) using the Euclidean-invariant finite differences introduced by Calabi et al. \cite{calabi1998differential} and corrected by Boutin \cite[Eq. (6)]{boutin2000numerically}. The signatures are mapped into $[-1,1]^2$ as $(0.8 \arctan(\kappa/3),0.8\arctan(\kappa_s/150))$; this mapping controls the weighting of the extreme features of the signatures at which $\kappa$ and (especially) $\kappa_s$) are large. The currents using moments for order $<10$ are computed as for \autoref{fig:randN10}, and the currents embedded in the plane to preserve as best as possible their pairwise distances by performing a least-squares optimisation in this space to minimise the 
distance between the pairwise distances between the points in the original space and in this 2D version.

There are some interesting differences between the two embeddings, such as the tight clustering of 14, 24, 26, 28 in this Figure, of which only 24 and 26 are close in \autoref{fig:randN10}. Looking at the position of shape 10 in both Figures, it is also clear that the differential signature is dominated by the slight kink on the right of the shape (see also shape 13). 

\begin{figure}
\centering
\includegraphics[width=10cm]{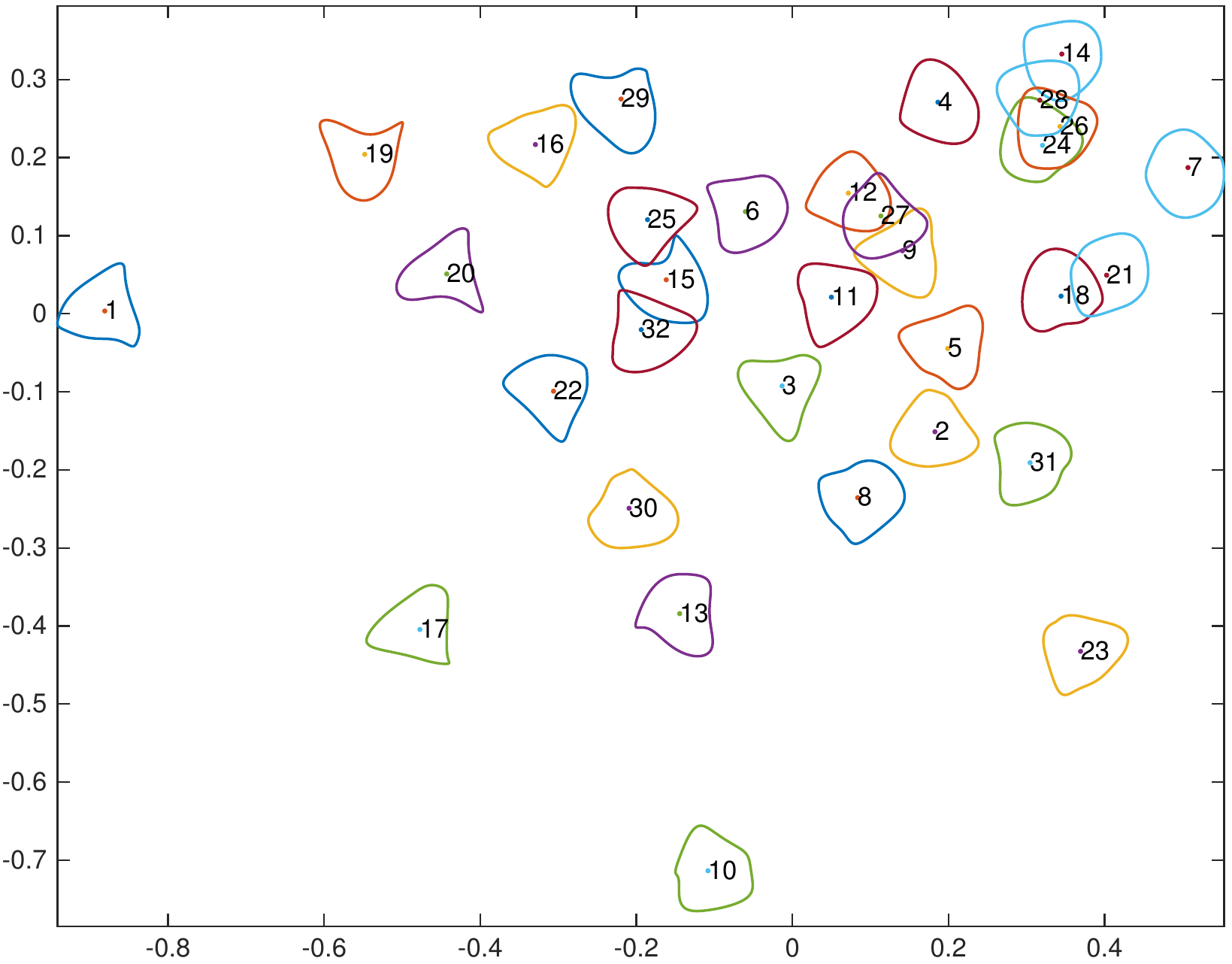}\\
\includegraphics[width=12cm]{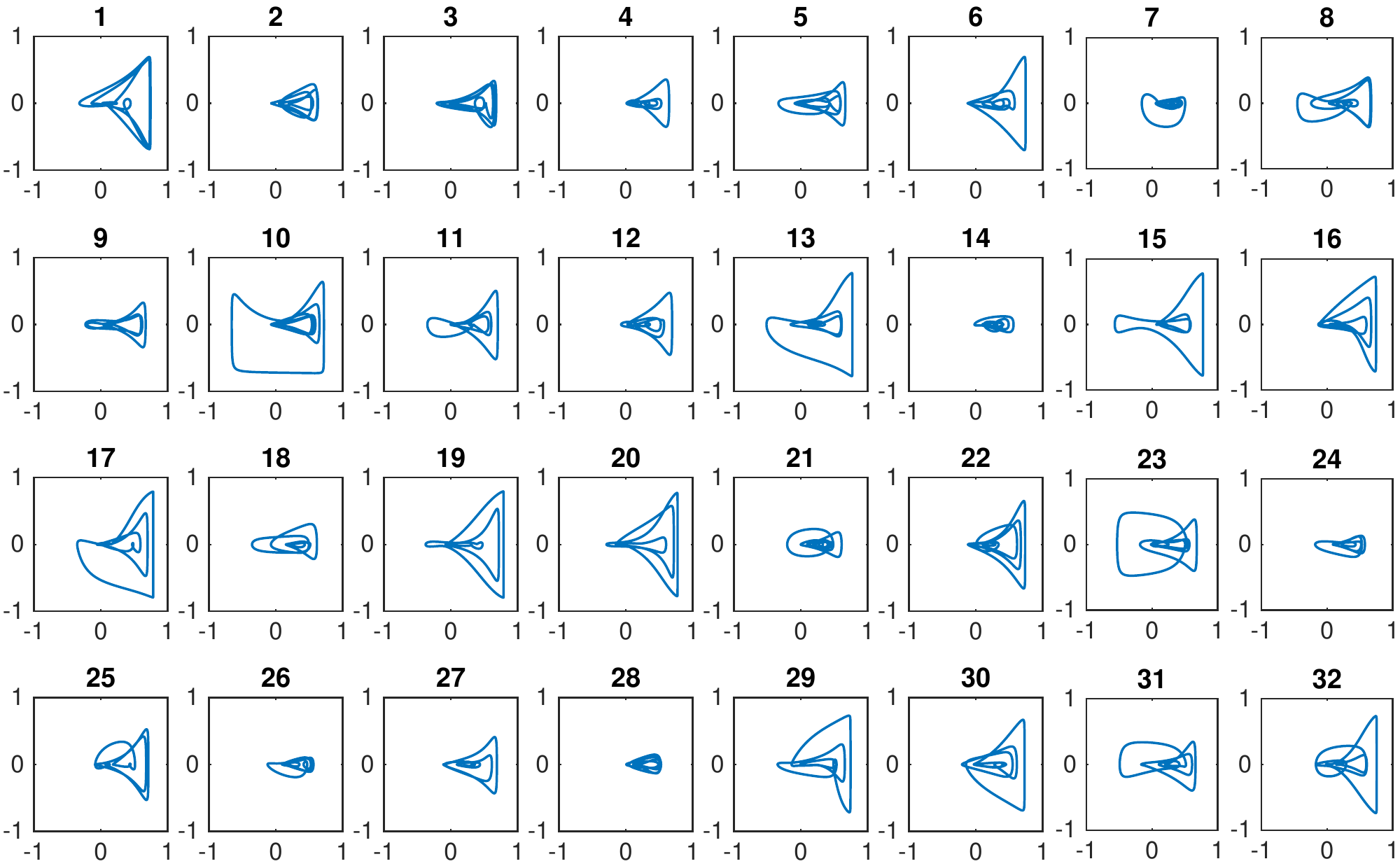}
\caption{\label{fig:2dembedeuc}
  \autoref{ex:2dembedeuc}
Here, the same 32 shapes as in \autoref{fig:randN10} are compared modulo the special Euclidean group of the 
plane (top). The differential signature of each shape is plotted below.
}
\end{figure}
\end{example}

\begin{example}
  \label{ex:fish}
  Our next example (\autoref{fig:fish.pdf}) is a family of complicated shapes that have a linear dependence on a parameter,  $\phi(t;a) = \phi_1(t) + a \phi_2(t)$. As the shapes are relatively complicated and the differences between them relatively small, the currents of the shapes (again, moments of order $<10$) embed extremely well in a plane: mean distance error $<10^{-3}$.
However, in this embedding in the plane, significant geometry is seen within the family, with speed along the curve decelerating, changing direction, and accelerating. This example
and the previous one illustrate that the method of finite element currents copes with 
complicated immersions.

\begin{figure}
\centering
\includegraphics[width=8cm]{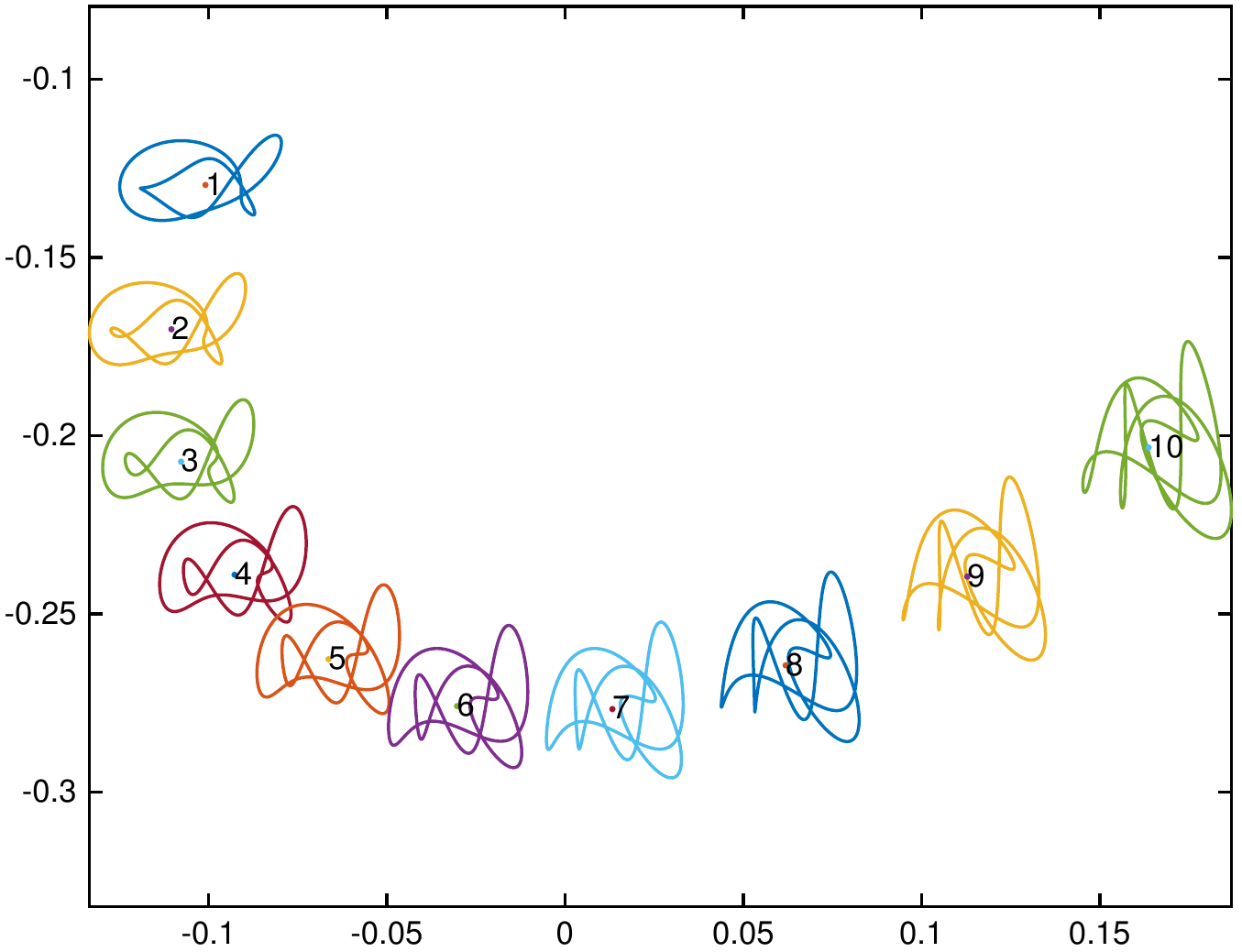}
\caption{\label{fig:fish.pdf}
  \autoref{ex:fish}:
A family of shapes is chosen with linear dependence on a parameter. They embed extremely well into the plane, largely forming a curve that reflects the dependence upon the parameter.}
\end{figure}
\end{example}

\begin{example}
\label{ex:final}
In our final example, we investigate how currents can be used to separate populations of shapes.
We generate a set of shapes using six Fourier components as previously,
but
we choose different values for the 3rd and 4th Fourier components and add relatively small noise to them.
The resulting shapes thus come from three different closely-related sets.
We generate a small dataset comprising ten examples from each of these sets.
These are shown in \autoref{fig:lasttestdata} in the three sets; it can be seen that it is hard to distinguish between the examples. 

We then apply PCA to the currents and use PCA to project the shapes into 2D. \autoref{fig:lasttestPCA} shows the results of this PCA with a few examples of each shape plotted. The three classes are clearly visible in this embedding. 


\begin{figure}
\centering
\includegraphics[width=.75\textwidth]{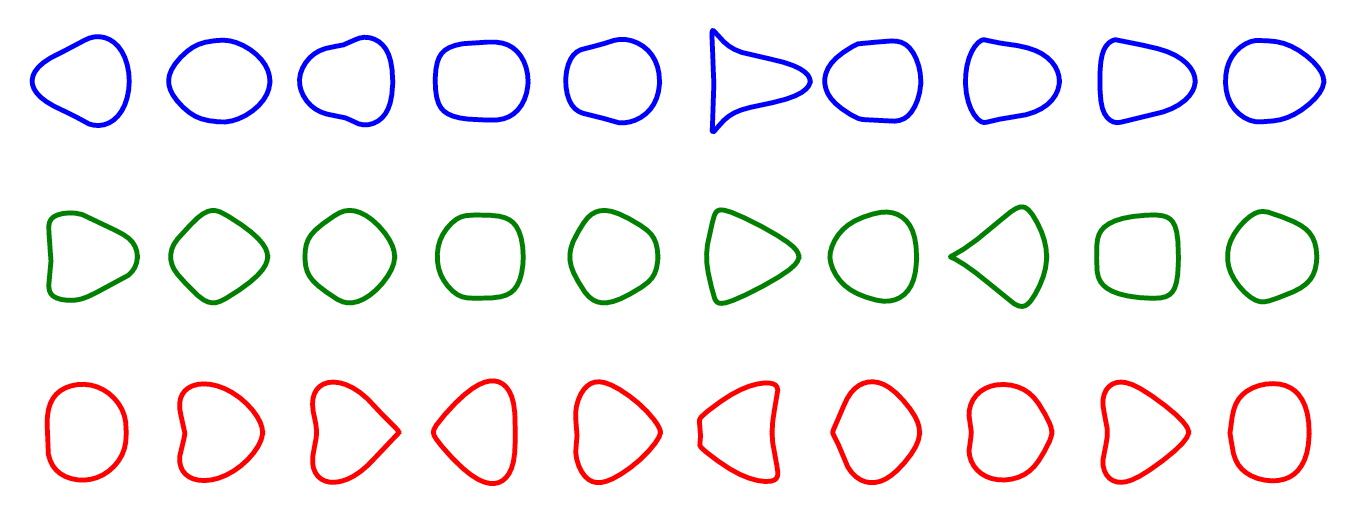}
\caption{\label{fig:lasttestdata}
  \autoref{ex:final}:
Data drawn from three slightly different sets, where the difference is in the third and fourth Fourier components. It is hard to visually distinguish the three shapes.
}
\end{figure}

\begin{figure}
\centering
\includegraphics[width=12cm]{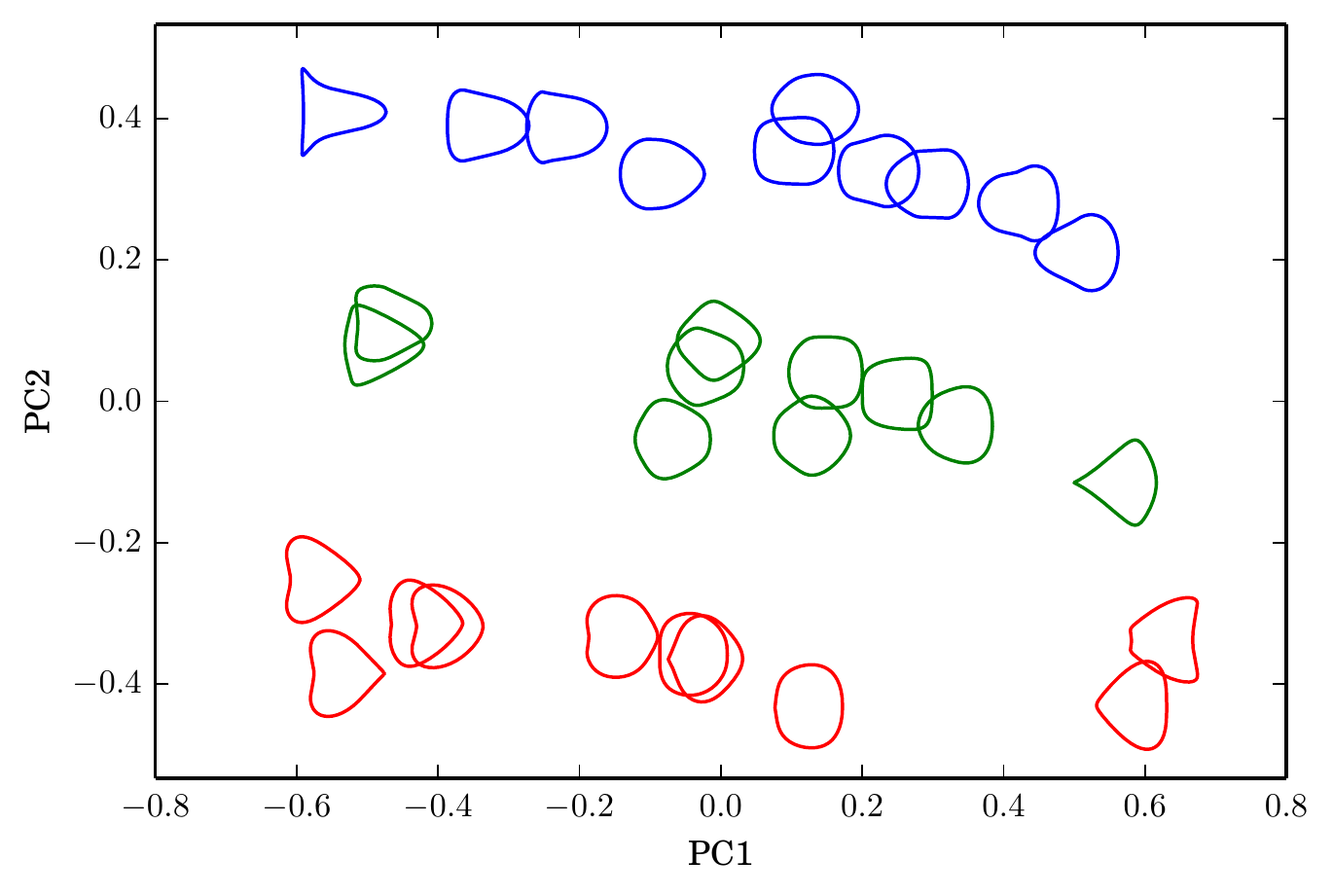}
\caption{\label{fig:lasttestPCA}
  \autoref{ex:final}:
The 2D embedding of the dataset using the first two principal components. The first principal component completely fails to separate the data, but the second does it perfectly.
}
\end{figure}


\end{example}

%
%


\bibliographystyle{amsplainnat} 
\bibliography{shape_currents}


\end{document}